\tikzset{
v/.style={draw, fill, circle, minimum size=1.5mm, inner sep=0},
invis/.style={draw, fill, circle, minimum size=0.3mm, inner sep=0},
b/.style={draw , regular polygon,regular polygon sides=4, minimum size=1.5mm, inner sep=.5mm},
e/.style={very thick},
vs/.style={draw, fill, circle, minimum size=1mm, inner sep=0},
bs/.style={draw,  regular polygon,regular polygon sides=4, minimum size=2mm, inner sep=0mm},
es/.style={thick}
}
\newlength{\nodeheight}
\newlength{\nodewidth}
\theoremstyle{plain}
\newtheorem{theorem}{Theorem}[section]
\theoremstyle{definition} \newtheorem{definition}[theorem]{Definition}
\theoremstyle{plain}
\newtheorem{lemma}[theorem]{Lemma}
\theoremstyle{plain} \newtheorem{proposition}[theorem]{Proposition}
\theoremstyle{plain} 
\theoremstyle{plain} 
\theoremstyle{remark} 
\theoremstyle{remark} \newtheorem{remark}[theorem]{Remark}
\theoremstyle{remark} \newtheorem{example}[theorem]{Example}
\theoremstyle{remark} 
\theoremstyle{definition} 
\providecommand{\varitem}{} 
\renewcommand{\t}{\mathbbm{1}}
\newcommand{\rook}{\mathcal{R}}
\newcommand{\Tor}{\mathrm{Tor}}
\newcommand{\Brauer}{\mathrm{Br}}
\newcommand{\TL}{\mathrm{TL}}
\newcommand{\norm}[1]{\lvert{#1}\rvert}
\newcommand{\Jones}{\mathrm{J}}
\newcommand{\Partition}{\mathrm{P}}
\begin{document}

\title[Stable homology isomorphisms for the partition and Jones algebras]{Stable homology isomorphisms for the partition and Jones annular algebras}
\author{Guy Boyde}
\address{Mathematical Institute, Utrecht University, Heidelberglaan 8
3584 CS Utrecht, The Netherlands}
\email{g.boyde@uu.nl}

\subjclass[2020]{Primary 16E40, 20J06; Secondary 20B30}
\keywords{Homological stability, Partition algebras, Jones annular algebras}

\maketitle

\begin{abstract}
    We show that the homology of the Jones annular algebras is isomorphic to that of the cyclic groups below a line of gradient $\frac{1}{2}$. We also show that the homology of the partition algebras is isomorphic to that of the symmetric groups below a line of gradient 1, strengthening a result of Boyd-Hepworth-Patzt. Both isomorphisms hold in a range exceeding the stability range of the algebras in question. Along the way, we prove the usual odd-strand and invertible parameter results for the Jones annular algebras.
\end{abstract}

\section{Introduction}

To say that a family of augmented $R$-algebras $$A_1 \xrightarrow{\iota_1} A_2 \xrightarrow{\iota_2} A_3 \xrightarrow{\iota_3} A_4 \xrightarrow{\iota_4} \cdots $$
exhibits \emph{homological stability} is to say that the maps $$\Tor^{A_n}_*(\t,\t) \to \Tor^{A_{n+1}}_*(\t,\t)$$ induced on homology by $\iota_n$ are isomorphisms in a range of degrees which increases with $n$. Here $\t$ is the trivial module obtained as the quotient of $A$ by the augmentation ideal, and $R$ will be commutative and unital throughout.

Various authors have proven stability results for Temperley-Lieb algebras \cite{BH,BHComb,Sroka}, Brauer algebras \cite{BHP}, Iwahori-Hecke algebras of types $A$ and $B$ \cite{Hepworth,Moselle}, and, most recently, partition algebras \cite{BHPPartition}. In many of these cases, the authors do not prove homological stability for the algebras $A_n$ directly, but rather prove that the homology of $A_n$ is isomorphic in a range to that of some family $B_n$ whose homology is known to be stable, and then get stability for the $A_n$ as a corollary. This paper is intended to advertise the virtues of approaching such questions by \emph{fixing $n$} and attempting to resolve $B_n$ over $A_n$, forgetting altogether that $A_n$ and $B_n$ each belong to family, following the template of \cite{Sroka}.

\subsection{Results: Partition algebras}

The partition algebras $\Partition_n(\delta)$ originate independently in work of Jones \cite{JonesPartition} and Martin \cite{Martin}. Let $R$ be a (commutative unital) ring, and let $\delta \in R$. The $R$-algebra $\Partition_n(\delta)$ is the free $R$-module on the basis consisting of partitions of the set $\underline{n} \cup \underline{n}' = \{1,2,\dots, n,1',2'\dots, n'\}$, equipped with a certain multiplication rule (Definition \ref{def:partitionAlg}). Boyd, Hepworth, and Patzt \cite{BHPPartition} have proven an optimal homology stability range for the partition algebras: the map $$\Tor_q^{P_{n-1}(\delta)}(\t,\t) \to \Tor_q^{\Partition_n(\delta)}(\t,\t)$$ is an isomorphism for $q \leq \frac{n-1}{2}$. They accomplish this by showing that the homology of the partition algebras is naturally isomorphic to that of the symmetric groups in this range. Our result is as follows.

\begin{theorem} \label{thm:partition} The natural map $$\Tor_q^{\Partition_n(\delta)}(\t,\t) \to \Tor_q^{R \Sigma_n}(\t,\t) = : H_q(\Sigma_n;R)$$ is an isomorphism for $q \leq n-1$, and a surjection for $q = n$. \end{theorem}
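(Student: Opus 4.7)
The strategy is to fix $n$ and exploit the surjection $\Partition_n(\delta) \twoheadrightarrow R\Sigma_n$, whose kernel is the two-sided ideal spanned by partitions of propagating rank strictly less than $n$. Writing $A = \Partition_n(\delta)$ and $B = R\Sigma_n$, the Grothendieck spectral sequence for the factorisation $\t \otimes_A - = (\t \otimes_B -) \circ (B \otimes_A -)$ takes the form
$$E^2_{p,q} = \Tor^{B}_p\bigl(\t,\; \Tor^{A}_q(B, \t)\bigr) \;\Longrightarrow\; \Tor^{A}_{p+q}(\t, \t),$$
and is valid because $B \otimes_A -$ sends $A$-projectives to $B$-projectives. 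Since $B \otimes_A \t = \t$, the bottom row is $E^2_{p,0} = H_p(\Sigma_n; R)$ and the edge map $\Tor^A_m(\t,\t) \to H_m(\Sigma_n; R)$ is precisely the map of the theorem. A routine inspection of the differentials $d^r \colon E^r_{p,q} \to E^r_{p-r,\, q+r-1}$ shows that if
$$\Tor^{\Partition_n(\delta)}_q(R\Sigma_n, \t) = 0 \quad \text{for all } 0 < q \leq n - 3, \qquad (\star)$$
then all incoming and outgoing differentials affecting the bottom row vanish through total degree $n-2$, so that the edge map is an isomorphism for $m \leq n - 3$ and a surjection for $m = n - 2$, as required.

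The body of the proof is therefore devoted to establishing $(\star)$. I would do this by constructing an explicit resolution
$$\cdots \to P_2 \to P_1 \to P_0 \to R\Sigma_n \to 0$$
of $R\Sigma_n$ by projective right $\Partition_n(\delta)$-modules such that $P_\bullet \otimes_A \t$ is acyclic in degrees $1, \ldots, n - 3$. A natural candidate is built from a semi-simplicial object whose simplices parametrise combinatorial data recording how to degenerate the identity permutation to a partition of smaller propagating rank: concretely, $p$-simplices should correspond to the ways of performing $p+1$ strand-merges (and/or singleton-block insertions) producing a rank $n - p - 1$ partition, with face maps undoing individual merges. Each simplex generates a projective summand of the form $e \Partition_n(\delta)$ for a suitable idempotent $e$, and applying $- \otimes_A \t$ collapses the resolution to the augmented chain complex of an auxiliary simplicial complex built from set-partition data on $\{1, \ldots, n\}$.

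The technical heart of the argument, and the main obstacle, is verifying that this auxiliary complex has reduced connectivity at least $n - 3$. In spirit, this pushes the connectivity estimates underpinning the homological stability results of \cite{BH, BHP, BHPPartition} past the usual stability range; no existing input suffices, since the known stability range for $\Partition_n(\delta)$ itself only reaches $\tfrac{n-1}{2}$. I would attempt the connectivity estimate via an explicit discrete Morse matching on the complex, or by an induction on $n$ using a Quillen-style fibre argument to reduce to the well-understood topology of the partition lattice of $\{1, \ldots, n\}$. Once the connectivity bound is in hand, the identification of $P_i \otimes_A \t$ with a free $R$-module on the $i$-simplices gives $(\star)$ and hence the theorem.
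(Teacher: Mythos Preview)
Your high-level framework---reduce via change of rings to the vanishing $(\star)$, then establish $(\star)$ by building a partial projective resolution of $R\Sigma_n$ over $\Partition_n(\delta)$---is exactly the paper's approach (compare Theorem~\ref{thm:MN} and Proposition~\ref{prop:SpecSeq}). The genuine gap is that you have not actually constructed the resolution: your semi-simplicial object is described only schematically, and the connectivity estimate you identify as ``the main obstacle'' is left entirely open, with only discrete-Morse or Quillen-fibre methods suggested in passing. As you yourself note, no existing connectivity input for partition-algebra complexes reaches the required range, so this is not a small omission.

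The paper's insight is that no such connectivity fight is needed. One covers $I_{\leq n-1}$ by explicit left ideals $K_i$ (diagrams with $i'$ a singleton) and $L_{i,j}$ (diagrams with $i'$ and $j'$ in the same block), and forms the associated \v{C}ech-type Mayer--Vietoris complex. Any intersection of at most $n-1$ of these ideals is principal and generated by an idempotent (Lemma~\ref{lem:IdempotentUnlessSisn}), hence projective, and moreover vanishes upon tensoring with $\t$ (Lemma~\ref{lem:idempotentTensor}). Acyclicity of the untensored complex is then elementary: since the cover is $R$-free on subsets of a common diagram basis, the complex splits over basis elements into shifted simplicial chain complexes of simplices (Lemma~\ref{lem:MVac}), so the ``connectivity'' reduces to contractibility of $\Delta^k$. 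The bound $h=n-1$---and with it the range $q\le n-3$---arises not from any topological estimate but from the purely algebraic fact that once one intersects all $n$ of the $K_i$, the resulting ideal is no longer idempotent-generated.
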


The point is that although the partition algebras and the symmetric groups are both known to have best-possible stability ranges given by a line of gradient $\frac{1}{2}$, the isomorphism between their homologies in \cite[Theorem B]{BHPPartition} actually holds in a larger range, below a line of gradient 1.

The map appearing in the theorem statement arises as follows. A partition of $\underline{n} \cup \underline{n}'$ having precisely $n$ parts containing both a primed and an unprimed vertex can be identified with a permutation. This gives a retraction of algebras $$R \Sigma_n \to \mathrm{P}_n(\delta) \to R \Sigma_n,$$
where the second map is the quotient by the ideal $I_{\leq n-1}$ spanned by partitions not of the above form. The existence of such a group algebra retract is a general property of algebras of this flavour (see \cite[Proposition 2.3]{Boyde} for a precise statement in a slightly different context). This quotient map induces the map in homology appearing in Theorem \ref{thm:partition}.

\subsection{Results: Jones annular algebras}

The second family that we will study is the Jones annular algebras \cite{JonesAnnular} (Definition \ref{def:Jones}). We will write $\underline{n}$ for the set $\{1,2,\dots , n\}$.

Let $R$ be a ring (as always, commutative with unit), and let $\delta \in R$. Informally, the Jones annular algebra $\Jones_n(\delta)$ has a basis consisting of partitions of the set $\underline{n} \cup \underline{n}' = \{1,2,\dots, n,1',2'\dots, n'\}$ into parts of cardinality 2, such that these partitions can be represented by non-intersecting edges when the vertices are embedded in the ends of the cylinder. In other words, it is the `cylindrical' or `annular' version of the Temperley-Lieb algebra defined in \cite{TemperleyLieb} and studied in \cite{BH} and \cite{Sroka}.

Jones annular algebras are hence also closely related to the Brauer algebras $\Brauer_n(\delta)$ defined in \cite{Brauer} and studied in \cite{BHP}, and the partition algebras above. Precisely, there are inclusions:
$$\TL_n(\delta) \subset \Jones_n(\delta) \subset \Brauer_n(\delta) \subset \Partition_n(\delta).$$
For completeness, we remark that the rook-Brauer algebras $\rook \Brauer_n(\delta, \varepsilon)$ defined in \cite{HalversondelMas} and \cite{MartinMazorchuk} and studied in \cite{Boyde} contain the Brauer algebras and are contained in the Partition algebras when $\varepsilon = \delta$, but will not appear in this paper. All of these algebras are \emph{cellular} in the sense of Graham and Lehrer \cite{GrahamLehrer,Xi,MartinMazorchuk}, but we will not make much explicit use of this.

There is one important subtlety in the definition of $\Jones_n(\delta)$: pictorial representatives which differ `by a Dehn twist' correspond to the same basis element. That is, unlike the Temperley-Lieb algebra, a single basis element (i.e. a single pairing on the vertices) can be represented by multiple non-isotopic pictures. If this were not so then $\Jones_n(\delta)$ would be infinite dimensional, and would not be a subalgebra of the partition algebra. This makes it a slight abuse of terminology to call the basis elements \emph{annular diagrams}, but we will do so nonetheless.

\begin{example}
Here are two non-isotopic pictorial representatives of a single basis element in $J_{11}(\delta)$ - they differ only by a Dehn twist. The connections are coloured only to more clearly indicate which is which - this has no meaning in the algebra. The cylinder is drawn as a pasting diagram (the dotted lines to be identified).

\begin{center}
$\alpha = $
\quad
\begin{tikzpicture}[x=1.5cm,y=-.5cm,baseline=-1.05cm]

\def\wid{2}
\def\cheatWid{\wid*1.05}
\def\hei{0.5}
\def\nodesize{3}
\def\ang{90}

\draw[e, dashed] (\wid,-0.5*\hei) rectangle (2*\wid,-0.5*\hei);
\draw[e, dashed] (\wid,10.5*\hei) rectangle (2*\wid,10.5*\hei);

\node[v, minimum size=\nodesize] (b1) at (1* \wid,0*\hei) {};
\node[v, minimum size=\nodesize] (b2) at (1* \wid,1*\hei) {};
\node[v, minimum size=\nodesize] (b3) at (1* \wid,2*\hei) {};
\node[v, minimum size=\nodesize] (b4) at (1* \wid,3*\hei) {};
\node[v, minimum size=\nodesize] (b5) at (1* \wid,4*\hei) {};
\node[v, minimum size=\nodesize] (b6) at (1* \wid,5*\hei) {};
\node[v, minimum size=\nodesize] (b7) at (1* \wid,6*\hei) {};
\node[v, minimum size=\nodesize] (b8) at (1* \wid,7*\hei) {};
\node[v, minimum size=\nodesize] (b9) at (1* \wid,8*\hei) {};
\node[v, minimum size=\nodesize] (b10) at (1* \wid,9*\hei) {};
\node[v, minimum size=\nodesize] (b11) at (1* \wid,10*\hei) {};

\node[v, minimum size=\nodesize] (c1) at (2* \wid,0*\hei) {};
\node[v, minimum size=\nodesize] (c2) at (2* \wid,1*\hei) {};
\node[v, minimum size=\nodesize] (c3) at (2* \wid,2*\hei) {};
\node[v, minimum size=\nodesize] (c4) at (2* \wid,3*\hei) {};
\node[v, minimum size=\nodesize] (c5) at (2* \wid,4*\hei) {};
\node[v, minimum size=\nodesize] (c6) at (2* \wid,5*\hei) {};
\node[v, minimum size=\nodesize] (c7) at (2* \wid,6*\hei) {};
\node[v, minimum size=\nodesize] (c8) at (2* \wid,7*\hei) {};
\node[v, minimum size=\nodesize] (c9) at (2* \wid,8*\hei) {};
\node[v, minimum size=\nodesize] (c10) at (2* \wid,9*\hei) {};
\node[v, minimum size=\nodesize] (c11) at (2* \wid,10*\hei) {};

\draw[e] (b1) to[out = 0, in = 270] (1.067 * \wid,-0.5*\hei);
\draw[e] (b2) to[out = 0, in = 270] (1.13 * \wid,-0.5*\hei);
\draw[e] (b11) to[out = 0, in = 90] (1.067 * \wid,10.5*\hei);
\draw[e] (b10) to[out = 0, in = 90] (1.13 * \wid,10.5*\hei);

\draw[e] (b4) to[out=0, in=0] (b5);
\draw[e] (b6) to[out=0, in=0] (b7);

\draw[e,red] (b3) to[out=0, in=\ang] (1.4 * \cheatWid,10.5*\hei);
\draw[e] (b8) to[out=0, in=\ang] (1.33 * \cheatWid,10.5*\hei);
\draw[e,cyan] (b9) to[out=0, in=\ang] (1.267 * \cheatWid,10.5*\hei);

\draw[e,red] (1.4 * \cheatWid,-0.5*\hei) to[out=180+\ang, in=180] (c7);
\draw[e] (1.33 * \cheatWid,-0.5*\hei) to[out=180+\ang, in=\ang] (1.667 * \cheatWid,10.5*\hei);
\draw[e,cyan] (1.267 * \cheatWid,-0.5*\hei) to[out=180+\ang, in=\ang] (1.53 * \cheatWid,10.5*\hei);

\draw[e] (1.667 * \cheatWid,-0.5*\hei) to[out=220+\ang, in=180] (c1);
\draw[e,cyan] (1.53 * \cheatWid,-0.5*\hei) to[out=180+\ang, in=180] (c4);

\draw[e] (c5) to[out=180, in=180] (c6);
\draw[e] (c2) to[out=180, in=180] (c3);
\draw[e] (c8) to[out=180, in=180] (c11);
\draw[e] (c9) to[out=180, in=180] (c10);

\end{tikzpicture}
\quad
$=$
\quad
\begin{tikzpicture}[x=1.5cm,y=-.5cm,baseline=-1.05cm]

\def\wid{2}
\def\cheatWid{\wid*1.05}
\def\hei{0.5}
\def\nodesize{3}
\def\newAng{120}

\draw[e, dashed] (\wid,-0.5*\hei) rectangle (2*\wid,-0.5*\hei);
\draw[e, dashed] (\wid,10.5*\hei) rectangle (2*\wid,10.5*\hei);

\node[v, minimum size=\nodesize] (b1) at (1* \wid,0*\hei) {};
\node[v, minimum size=\nodesize] (b2) at (1* \wid,1*\hei) {};
\node[v, minimum size=\nodesize] (b3) at (1* \wid,2*\hei) {};
\node[v, minimum size=\nodesize] (b4) at (1* \wid,3*\hei) {};
\node[v, minimum size=\nodesize] (b5) at (1* \wid,4*\hei) {};
\node[v, minimum size=\nodesize] (b6) at (1* \wid,5*\hei) {};
\node[v, minimum size=\nodesize] (b7) at (1* \wid,6*\hei) {};
\node[v, minimum size=\nodesize] (b8) at (1* \wid,7*\hei) {};
\node[v, minimum size=\nodesize] (b9) at (1* \wid,8*\hei) {};
\node[v, minimum size=\nodesize] (b10) at (1* \wid,9*\hei) {};
\node[v, minimum size=\nodesize] (b11) at (1* \wid,10*\hei) {};

\node[v, minimum size=\nodesize] (c1) at (2* \wid,0*\hei) {};
\node[v, minimum size=\nodesize] (c2) at (2* \wid,1*\hei) {};
\node[v, minimum size=\nodesize] (c3) at (2* \wid,2*\hei) {};
\node[v, minimum size=\nodesize] (c4) at (2* \wid,3*\hei) {};
\node[v, minimum size=\nodesize] (c5) at (2* \wid,4*\hei) {};
\node[v, minimum size=\nodesize] (c6) at (2* \wid,5*\hei) {};
\node[v, minimum size=\nodesize] (c7) at (2* \wid,6*\hei) {};
\node[v, minimum size=\nodesize] (c8) at (2* \wid,7*\hei) {};
\node[v, minimum size=\nodesize] (c9) at (2* \wid,8*\hei) {};
\node[v, minimum size=\nodesize] (c10) at (2* \wid,9*\hei) {};
\node[v, minimum size=\nodesize] (c11) at (2* \wid,10*\hei) {};

\draw[e] (b1) to[out = 0, in = 270] (1.067 * \wid,-0.5*\hei);
\draw[e] (b2) to[out = 0, in = 270] (1.13 * \wid,-0.5*\hei);
\draw[e] (b11) to[out = 0, in = 90] (1.067 * \wid,10.5*\hei);
\draw[e] (b10) to[out = 0, in = 90] (1.13 * \wid,10.5*\hei);

\draw[e] (b4) to[out=0, in=0] (b5);
\draw[e] (b6) to[out=0, in=0] (b7);

\draw[e,red] (b3) to[out=0, in=180] (c7);
\draw[e] (b8) to[out=0, in=\newAng] (1.55 * \wid,10.5*\hei);
\draw[e,cyan] (b9) to[out=0, in=\newAng] (1.45 * \wid,10.5*\hei);

\draw[e] (1.55 * \wid,-0.5*\hei) to[out=180+\newAng, in=180] (c1);
\draw[e,cyan] (1.45 * \wid,-0.5*\hei) to[out=180+\newAng, in=180] (c4);

\draw[e] (c5) to[out=180, in=180] (c6);
\draw[e] (c2) to[out=180, in=180] (c3);
\draw[e] (c8) to[out=180, in=180] (c11);
\draw[e] (c9) to[out=180, in=180] (c10);

\end{tikzpicture}
\quad
\end{center}
\end{example}

\begin{example} Here is a sample multiplication in $\mathrm{J}_{11}(\delta)$. Let $\alpha$ be the (right) diagram from the previous example, and let

\begin{center}
$\beta = $
\quad
\begin{tikzpicture}[x=1.5cm,y=-.5cm,baseline=-1.05cm]

\def\wid{2}
\def\cheatWid{\wid*1.05}
\def\hei{0.5}
\def\nodesize{3}
\def\newAng{120}

\draw[e, dashed] (2*\wid,-0.5*\hei) rectangle (3*\wid,-0.5*\hei);
\draw[e, dashed] (2*\wid,10.5*\hei) rectangle (3*\wid,10.5*\hei);

\node[v, minimum size=\nodesize] (c1) at (2* \wid,0*\hei) {};
\node[v, minimum size=\nodesize] (c2) at (2* \wid,1*\hei) {};
\node[v, minimum size=\nodesize] (c3) at (2* \wid,2*\hei) {};
\node[v, minimum size=\nodesize] (c4) at (2* \wid,3*\hei) {};
\node[v, minimum size=\nodesize] (c5) at (2* \wid,4*\hei) {};
\node[v, minimum size=\nodesize] (c6) at (2* \wid,5*\hei) {};
\node[v, minimum size=\nodesize] (c7) at (2* \wid,6*\hei) {};
\node[v, minimum size=\nodesize] (c8) at (2* \wid,7*\hei) {};
\node[v, minimum size=\nodesize] (c9) at (2* \wid,8*\hei) {};
\node[v, minimum size=\nodesize] (c10) at (2* \wid,9*\hei) {};
\node[v, minimum size=\nodesize] (c11) at (2* \wid,10*\hei) {};

\node[v, minimum size=\nodesize] (d1) at (3* \wid,0*\hei) {};
\node[v, minimum size=\nodesize] (d2) at (3* \wid,1*\hei) {};
\node[v, minimum size=\nodesize] (d3) at (3* \wid,2*\hei) {};
\node[v, minimum size=\nodesize] (d4) at (3* \wid,3*\hei) {};
\node[v, minimum size=\nodesize] (d5) at (3* \wid,4*\hei) {};
\node[v, minimum size=\nodesize] (d6) at (3* \wid,5*\hei) {};
\node[v, minimum size=\nodesize] (d7) at (3* \wid,6*\hei) {};
\node[v, minimum size=\nodesize] (d8) at (3* \wid,7*\hei) {};
\node[v, minimum size=\nodesize] (d9) at (3* \wid,8*\hei) {};
\node[v, minimum size=\nodesize] (d10) at (3* \wid,9*\hei) {};
\node[v, minimum size=\nodesize] (d11) at (3* \wid,10*\hei) {};

\draw[e] (c1) to[out = 0, in = 270] (2.067 * \wid,-0.5*\hei);
\draw[e] (c2) to[out = 0, in = 270] (2.13 * \wid,-0.5*\hei);
\draw[e] (c11) to[out = 0, in = 90] (2.067 * \wid,10.5*\hei);
\draw[e] (c10) to[out = 0, in = 90] (2.13 * \wid,10.5*\hei);

\draw[e] (c4) to[out=0, in=0] (c7);
\draw[e] (c6) to[out=0, in=0] (c5);
\draw[e] (c8) to[out=0, in=0] (c9);

\draw[e] (c3) to[out=0, in=180] (d1);

\draw[e] (d4) to[out=180, in=180] (d5);
\draw[e] (d2) to[out=180, in=180] (d3);
\draw[e] (d6) to[out=180, in=180] (d11);
\draw[e] (d7) to[out=180, in=180] (d8);
\draw[e] (d9) to[out=180, in=180] (d10);

\end{tikzpicture}
\quad
\end{center}
Then $\alpha \beta$ is computed by forming the `composed diagram'
\begin{center}
$\alpha \circ \beta = $
\quad
\begin{tikzpicture}[x=1.5cm,y=-.5cm,baseline=-1.05cm]

\def\wid{2}
\def\cheatWid{\wid*1.05}
\def\hei{0.5}
\def\nodesize{3}
\def\newAng{120}


\draw[e, dashed] (\wid,-0.5*\hei) rectangle (2*\wid,-0.5*\hei);
\draw[e, dashed] (\wid,10.5*\hei) rectangle (2*\wid,10.5*\hei);

\node[v, minimum size=\nodesize] (b1) at (1* \wid,0*\hei) {};
\node[v, minimum size=\nodesize] (b2) at (1* \wid,1*\hei) {};
\node[v, minimum size=\nodesize] (b3) at (1* \wid,2*\hei) {};
\node[v, minimum size=\nodesize] (b4) at (1* \wid,3*\hei) {};
\node[v, minimum size=\nodesize] (b5) at (1* \wid,4*\hei) {};
\node[v, minimum size=\nodesize] (b6) at (1* \wid,5*\hei) {};
\node[v, minimum size=\nodesize] (b7) at (1* \wid,6*\hei) {};
\node[v, minimum size=\nodesize] (b8) at (1* \wid,7*\hei) {};
\node[v, minimum size=\nodesize] (b9) at (1* \wid,8*\hei) {};
\node[v, minimum size=\nodesize] (b10) at (1* \wid,9*\hei) {};
\node[v, minimum size=\nodesize] (b11) at (1* \wid,10*\hei) {};

\node[v, minimum size=\nodesize] (c1) at (2* \wid,0*\hei) {};
\node[v, minimum size=\nodesize] (c2) at (2* \wid,1*\hei) {};
\node[v, minimum size=\nodesize] (c3) at (2* \wid,2*\hei) {};
\node[v, minimum size=\nodesize] (c4) at (2* \wid,3*\hei) {};
\node[v, minimum size=\nodesize] (c5) at (2* \wid,4*\hei) {};
\node[v, minimum size=\nodesize] (c6) at (2* \wid,5*\hei) {};
\node[v, minimum size=\nodesize] (c7) at (2* \wid,6*\hei) {};
\node[v, minimum size=\nodesize] (c8) at (2* \wid,7*\hei) {};
\node[v, minimum size=\nodesize] (c9) at (2* \wid,8*\hei) {};
\node[v, minimum size=\nodesize] (c10) at (2* \wid,9*\hei) {};
\node[v, minimum size=\nodesize] (c11) at (2* \wid,10*\hei) {};

\draw[e] (b1) to[out = 0, in = 270] (1.067 * \wid,-0.5*\hei);
\draw[e] (b2) to[out = 0, in = 270] (1.13 * \wid,-0.5*\hei);
\draw[e] (b11) to[out = 0, in = 90] (1.067 * \wid,10.5*\hei);
\draw[e] (b10) to[out = 0, in = 90] (1.13 * \wid,10.5*\hei);

\draw[e] (b4) to[out=0, in=0] (b5);
\draw[e] (b6) to[out=0, in=0] (b7);

\draw[e] (b3) to[out=0, in=180] (c7);
\draw[e] (b8) to[out=0, in=\newAng] (1.55 * \wid,10.5*\hei);
\draw[e] (b9) to[out=0, in=\newAng] (1.45 * \wid,10.5*\hei);

\draw[e] (1.55 * \wid,-0.5*\hei) to[out=180+\newAng, in=180] (c1);
\draw[e] (1.45 * \wid,-0.5*\hei) to[out=180+\newAng, in=180] (c4);

\draw[e] (c5) to[out=180, in=180] (c6);
\draw[e] (c2) to[out=180, in=180] (c3);
\draw[e] (c8) to[out=180, in=180] (c11);
\draw[e] (c9) to[out=180, in=180] (c10);


\draw[e, dashed] (2*\wid,-0.5*\hei) rectangle (3*\wid,-0.5*\hei);
\draw[e, dashed] (2*\wid,10.5*\hei) rectangle (3*\wid,10.5*\hei);

\node[v, minimum size=\nodesize] (c1) at (2* \wid,0*\hei) {};
\node[v, minimum size=\nodesize] (c2) at (2* \wid,1*\hei) {};
\node[v, minimum size=\nodesize] (c3) at (2* \wid,2*\hei) {};
\node[v, minimum size=\nodesize] (c4) at (2* \wid,3*\hei) {};
\node[v, minimum size=\nodesize] (c5) at (2* \wid,4*\hei) {};
\node[v, minimum size=\nodesize] (c6) at (2* \wid,5*\hei) {};
\node[v, minimum size=\nodesize] (c7) at (2* \wid,6*\hei) {};
\node[v, minimum size=\nodesize] (c8) at (2* \wid,7*\hei) {};
\node[v, minimum size=\nodesize] (c9) at (2* \wid,8*\hei) {};
\node[v, minimum size=\nodesize] (c10) at (2* \wid,9*\hei) {};
\node[v, minimum size=\nodesize] (c11) at (2* \wid,10*\hei) {};

\node[v, minimum size=\nodesize] (d1) at (3* \wid,0*\hei) {};
\node[v, minimum size=\nodesize] (d2) at (3* \wid,1*\hei) {};
\node[v, minimum size=\nodesize] (d3) at (3* \wid,2*\hei) {};
\node[v, minimum size=\nodesize] (d4) at (3* \wid,3*\hei) {};
\node[v, minimum size=\nodesize] (d5) at (3* \wid,4*\hei) {};
\node[v, minimum size=\nodesize] (d6) at (3* \wid,5*\hei) {};
\node[v, minimum size=\nodesize] (d7) at (3* \wid,6*\hei) {};
\node[v, minimum size=\nodesize] (d8) at (3* \wid,7*\hei) {};
\node[v, minimum size=\nodesize] (d9) at (3* \wid,8*\hei) {};
\node[v, minimum size=\nodesize] (d10) at (3* \wid,9*\hei) {};
\node[v, minimum size=\nodesize] (d11) at (3* \wid,10*\hei) {};

\draw[e] (c1) to[out = 0, in = 270] (2.067 * \wid,-0.5*\hei);
\draw[e] (c2) to[out = 0, in = 270] (2.13 * \wid,-0.5*\hei);
\draw[e] (c11) to[out = 0, in = 90] (2.067 * \wid,10.5*\hei);
\draw[e] (c10) to[out = 0, in = 90] (2.13 * \wid,10.5*\hei);

\draw[e] (c4) to[out=0, in=0] (c7);
\draw[e] (c6) to[out=0, in=0] (c5);
\draw[e] (c8) to[out=0, in=0] (c9);

\draw[e] (c3) to[out=0, in=180] (d1);

\draw[e] (d4) to[out=180, in=180] (d5);
\draw[e] (d2) to[out=180, in=180] (d3);
\draw[e] (d6) to[out=180, in=180] (d11);
\draw[e] (d7) to[out=180, in=180] (d8);
\draw[e] (d9) to[out=180, in=180] (d10);

\end{tikzpicture}
\quad
\end{center}
and (just like in the Temperley-Lieb algebra) replacing each loop appearing in the middle with a factor of $\delta \in R$. This gives:

\begin{center}
$\alpha \beta = \delta \cdot$
\quad
\begin{tikzpicture}[x=1.5cm,y=-.5cm,baseline=-1.05cm]

\def\wid{2}
\def\cheatWid{\wid*1.05}
\def\hei{0.5}
\def\nodesize{3}
\def\newAng{120}

\draw[e, dashed] (\wid,-0.5*\hei) rectangle (2*\wid,-0.5*\hei);
\draw[e, dashed] (\wid,10.5*\hei) rectangle (2*\wid,10.5*\hei);

\node[v, minimum size=\nodesize] (b1) at (1* \wid,0*\hei) {};
\node[v, minimum size=\nodesize] (b2) at (1* \wid,1*\hei) {};
\node[v, minimum size=\nodesize] (b3) at (1* \wid,2*\hei) {};
\node[v, minimum size=\nodesize] (b4) at (1* \wid,3*\hei) {};
\node[v, minimum size=\nodesize] (b5) at (1* \wid,4*\hei) {};
\node[v, minimum size=\nodesize] (b6) at (1* \wid,5*\hei) {};
\node[v, minimum size=\nodesize] (b7) at (1* \wid,6*\hei) {};
\node[v, minimum size=\nodesize] (b8) at (1* \wid,7*\hei) {};
\node[v, minimum size=\nodesize] (b9) at (1* \wid,8*\hei) {};
\node[v, minimum size=\nodesize] (b10) at (1* \wid,9*\hei) {};
\node[v, minimum size=\nodesize] (b11) at (1* \wid,10*\hei) {};

\node[invis] (c1) at (1.5* \wid,0*\hei) {};
\node[invis] (c2) at (1.5* \wid,1*\hei) {};
\node[invis] (c3) at (1.5* \wid,2*\hei) {};
\node[invis] (c4) at (1.5* \wid,3*\hei) {};
\node[invis] (c7) at (1.5* \wid,6*\hei) {};
\node[invis] (c8) at (1.5* \wid,7*\hei) {};
\node[invis] (c9) at (1.5* \wid,8*\hei) {};
\node[invis] (c10) at (1.5* \wid,9*\hei) {};
\node[invis] (c11) at (1.5* \wid,10*\hei) {};

\node[v, minimum size=\nodesize] (d1) at (2* \wid,0*\hei) {};
\node[v, minimum size=\nodesize] (d2) at (2* \wid,1*\hei) {};
\node[v, minimum size=\nodesize] (d3) at (2* \wid,2*\hei) {};
\node[v, minimum size=\nodesize] (d4) at (2* \wid,3*\hei) {};
\node[v, minimum size=\nodesize] (d5) at (2* \wid,4*\hei) {};
\node[v, minimum size=\nodesize] (d6) at (2* \wid,5*\hei) {};
\node[v, minimum size=\nodesize] (d7) at (2* \wid,6*\hei) {};
\node[v, minimum size=\nodesize] (d8) at (2* \wid,7*\hei) {};
\node[v, minimum size=\nodesize] (d9) at (2* \wid,8*\hei) {};
\node[v, minimum size=\nodesize] (d10) at (2* \wid,9*\hei) {};
\node[v, minimum size=\nodesize] (d11) at (2* \wid,10*\hei) {};

\draw[e] (b1) to[out = 0, in = 270] (1.067 * \wid,-0.5*\hei);
\draw[e] (b2) to[out = 0, in = 270] (1.13 * \wid,-0.5*\hei);
\draw[e] (b11) to[out = 0, in = 90] (1.067 * \wid,10.5*\hei);
\draw[e] (b10) to[out = 0, in = 90] (1.13 * \wid,10.5*\hei);

\draw[e] (c1) to[out = 0, in = 270] (1.567 * \wid,-0.5*\hei);
\draw[e] (c2) to[out = 0, in = 270] (1.63 * \wid,-0.5*\hei);
\draw[e] (c11) to[out = 0, in = 90] (1.567 * \wid,10.5*\hei);
\draw[e] (c10) to[out = 0, in = 90] (1.63 * \wid,10.5*\hei);

\draw[e] (b4) to[out=0, in=0] (b5);
\draw[e] (b6) to[out=0, in=0] (b7);

\draw[e] (b8) to[out=0, in=\newAng] (1.275 * \wid,10.5*\hei);
\draw[e] (b9) to[out=0, in=\newAng] (1.225 * \wid,10.5*\hei);

\draw[e] (1.275 * \wid,-0.5*\hei) to[out=180+\newAng, in=180] (c1);
\draw[e] (1.225 * \wid,-0.5*\hei) to[out=180+\newAng, in=180] (c4);

\draw[e] (b3) to[out=0, in=180] (c7);

\draw[e] (c2) to[out=180, in=180] (c3);
\draw[e] (c4) to[out=0, in=0] (c7);
\draw[e] (c8) to[out=0, in=0] (c9);

\draw[e] (c3) to[out=0, in=180] (d1);

\draw[e] (c8) to[out=180, in=180] (c11);
\draw[e] (c9) to[out=180, in=180] (c10);

\draw[e] (d4) to[out=180, in=180] (d5);
\draw[e] (d2) to[out=180, in=180] (d3);
\draw[e] (d6) to[out=180, in=180] (d11);
\draw[e] (d7) to[out=180, in=180] (d8);
\draw[e] (d9) to[out=180, in=180] (d10);

\end{tikzpicture}
\quad
$= \delta \cdot$
\quad
\begin{tikzpicture}[x=1.5cm,y=-.5cm,baseline=-1.05cm]

\def\wid{2}
\def\cheatWid{\wid*1.05}
\def\hei{0.5}
\def\nodesize{3}
\def\newAng{120}

\draw[e, dashed] (\wid,-0.5*\hei) rectangle (2*\wid,-0.5*\hei);
\draw[e, dashed] (\wid,10.5*\hei) rectangle (2*\wid,10.5*\hei);

\node[v, minimum size=\nodesize] (b1) at (1* \wid,0*\hei) {};
\node[v, minimum size=\nodesize] (b2) at (1* \wid,1*\hei) {};
\node[v, minimum size=\nodesize] (b3) at (1* \wid,2*\hei) {};
\node[v, minimum size=\nodesize] (b4) at (1* \wid,3*\hei) {};
\node[v, minimum size=\nodesize] (b5) at (1* \wid,4*\hei) {};
\node[v, minimum size=\nodesize] (b6) at (1* \wid,5*\hei) {};
\node[v, minimum size=\nodesize] (b7) at (1* \wid,6*\hei) {};
\node[v, minimum size=\nodesize] (b8) at (1* \wid,7*\hei) {};
\node[v, minimum size=\nodesize] (b9) at (1* \wid,8*\hei) {};
\node[v, minimum size=\nodesize] (b10) at (1* \wid,9*\hei) {};
\node[v, minimum size=\nodesize] (b11) at (1* \wid,10*\hei) {};

\node[v, minimum size=\nodesize] (d1) at (2* \wid,0*\hei) {};
\node[v, minimum size=\nodesize] (d2) at (2* \wid,1*\hei) {};
\node[v, minimum size=\nodesize] (d3) at (2* \wid,2*\hei) {};
\node[v, minimum size=\nodesize] (d4) at (2* \wid,3*\hei) {};
\node[v, minimum size=\nodesize] (d5) at (2* \wid,4*\hei) {};
\node[v, minimum size=\nodesize] (d6) at (2* \wid,5*\hei) {};
\node[v, minimum size=\nodesize] (d7) at (2* \wid,6*\hei) {};
\node[v, minimum size=\nodesize] (d8) at (2* \wid,7*\hei) {};
\node[v, minimum size=\nodesize] (d9) at (2* \wid,8*\hei) {};
\node[v, minimum size=\nodesize] (d10) at (2* \wid,9*\hei) {};
\node[v, minimum size=\nodesize] (d11) at (2* \wid,10*\hei) {};

\draw[e] (b1) to[out = 0, in = 270] (1.067 * \wid,-0.5*\hei);
\draw[e] (b2) to[out = 0, in = 270] (1.13 * \wid,-0.5*\hei);
\draw[e] (b3) to[out = 0, in = 270] (1.2 * \wid,-0.5*\hei);
\draw[e] (b11) to[out = 0, in = 90] (1.067 * \wid,10.5*\hei);
\draw[e] (b10) to[out = 0, in = 90] (1.13 * \wid,10.5*\hei);
\draw[e] (b9) to[out = 0, in = 90] (1.2 * \wid,10.5*\hei);

\draw[e] (b4) to[out=0, in=0] (b5);
\draw[e] (b6) to[out=0, in=0] (b7);

\draw[e] (b8) to[out=0, in=180] (d1);

\draw[e] (d4) to[out=180, in=180] (d5);
\draw[e] (d2) to[out=180, in=180] (d3);
\draw[e] (d6) to[out=180, in=180] (d11);
\draw[e] (d7) to[out=180, in=180] (d8);
\draw[e] (d9) to[out=180, in=180] (d10);

\end{tikzpicture}
\quad
$\in J_{11}(\delta),$
\end{center} where we simplify the result as in the Temperley-Lieb algebra.
\end{example}

An annular diagram having $n$ left-to-right connections may be identified with an element of the cyclic group $C_n$. As for the partition algebras, this gives a retraction $$R C_n \to \Jones_n(\delta) \to R C_n$$
onto a group algebra, where the second map is the quotient by the ideal $I_{\leq n-1}$ spanned by diagrams with fewer than the maximal number $n$ of left-to-right connections. We note that, unlike the Partition algebras, the Jones algebras fit into the setting of \cite{Boyde}: the subalgebra $R C_n \subset \Jones_n(\delta)$ is exactly the `subalgebra on diagrams with the maximal number of left-to-right connections' appearing in \cite[Proposition 2.3]{Boyde}.

Our main result for the Jones annular algebras is then as follows.

\begin{theorem} \label{thm:Jones} The natural map $$\Tor_q^{\Jones_n(\delta)}(\t,\t) \to \Tor_q^{R C_n}(\t,\t) = : H_q(C_n;R)$$ is an isomorphism for $q \leq \frac{n}{2}-1$, and a surjection for $q = \frac{n}{2}$.
\end{theorem}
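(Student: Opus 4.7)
The plan is to resolve $RC_n$, viewed as a $\Jones_n(\delta)$-module via the quotient $\Jones_n(\delta) \twoheadrightarrow \Jones_n(\delta)/I_{\leq n-1} \cong RC_n$, by projective $\Jones_n(\delta)$-modules, and to control the length of such a resolution. The change-of-rings (Cartan--Eilenberg) spectral sequence
\[ E^2_{p,q} = \Tor_p^{RC_n}\bigl(\t,\, \Tor_q^{\Jones_n(\delta)}(\t, RC_n)\bigr) \;\Longrightarrow\; \Tor_{p+q}^{\Jones_n(\delta)}(\t,\t) \]
then provides the bridge: if $\Tor_q^{\Jones_n(\delta)}(\t, RC_n)$ vanishes for $1 \leq q \leq N$, then in total degree at most $N+1$ every potentially nonzero off-edge differential has either zero source or zero target, and a standard edge-map argument shows that the natural map $\Tor_q^{\Jones_n(\delta)}(\t,\t) \to \Tor_q^{RC_n}(\t,\t) = H_q(C_n;R)$ is an isomorphism for $q \leq N$ and a surjection for $q = N+1$. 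The theorem will follow with $N = \tfrac{n}{2} - 3$.

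Following the approach advertised in the introduction, I would fix $n$ and build an explicit chain complex $W_\bullet$ of free $\Jones_n(\delta)$-modules. The term $W_p$ will be a direct sum of left ideals $\Jones_n(\delta) \cdot e_S$, indexed by collections $S$ of $p$ disjoint cups on the left column of the cylinder that are compatible with the annular (cyclic non-crossing) structure; here $e_S$ is the annular diagram realising the chosen cups on the left, $p$ matching cups on the right, and $n - 2p$ through-strands connecting the remaining vertices straight across. The differential is the usual signed sum over deletion of a single cup, modelled on the constructions of \cite{Sroka} in the Temperley--Lieb setting and \cite{Boyde} in the Brauer setting, and by construction the augmentation $W_0 \to RC_n$ is precisely the quotient by $I_{\leq n-1}$.

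The main obstacle is to prove that $W_\bullet$ is a resolution of $RC_n$ in the appropriate range and, more importantly, that after applying $\t \otimes_{\Jones_n(\delta)} -$ the resulting complex has vanishing homology in degrees $1 \leq q \leq \tfrac{n}{2} - 3$. As in \cite{Sroka, Boyde, BHPPartition}, this should reduce to a high-connectivity statement for an auxiliary simplicial complex whose vertices are annular cups on $\{1, \ldots, n\}$ and whose simplices are jointly compatible collections of such cups. Because the annular non-crossing constraint forces the cups of any configuration to lie within a single arc of the cyclic order, this complex behaves essentially like a complex of non-crossing matchings on a linear arc, for which one expects connectivity on the order of $\tfrac{n}{2}$. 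Pinning down the exact connectivity bound---and verifying that the constants line up to deliver the precise range $q \leq \tfrac{n}{2} - 3$ rather than an off-by-a-few weakening---is where the bulk of the work lies, and is responsible for the slope $\tfrac{1}{2}$ in the final statement. Feeding the resulting vanishing of $\Tor_q^{\Jones_n(\delta)}(\t, RC_n)$ into the spectral sequence above then closes the argument.
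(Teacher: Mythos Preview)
Your overall architecture matches the paper's: build a partial projective resolution of $RC_n$ over $\Jones_n(\delta)$ from left ideals generated by cup idempotents, then feed the resulting $\Tor$-vanishing into a change-of-rings argument. The complex $W_\bullet$ you sketch is essentially the paper's Mayer--Vietoris complex for the cover $\{J_i\}_{i \in C_n}$, where $J_i$ is the left ideal spanned by diagrams with $i'$ connected to $(i+1)'$.

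Where you diverge is in locating the source of the range bound. You frame the main obstacle as a high-connectivity statement for an auxiliary simplicial complex of annular cups, but no such argument is needed. The full Mayer--Vietoris complex is acyclic in \emph{all} degrees by an elementary basis decomposition (each basis diagram contributes the augmented chain complex of a simplex), and $\t \otimes_{\Jones_n(\delta)} W_p = 0$ for $p \geq 1$ is immediate from the idempotent trick $x \otimes e = x \otimes e^2 = (xe) \otimes e = 0$. The bound $h = \tfrac{n}{2}-1$ enters only through \emph{projectivity}: one must show that each nonzero intersection $\bigcap_{i \in T} J_i$ with $\lvert T\rvert \leq h$ is principal and generated by an idempotent. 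The paper does this by exhibiting an explicit Sroka-style retraction element $\omega$, available precisely when the ``moral support'' $T \cup (T+1)$ is a proper subset of $C_n$; this fails only when $n$ is even and $T$ consists of all odd or all even residues, i.e.\ when $\lvert T\rvert = \tfrac{n}{2}$. So the slope $\tfrac{1}{2}$ comes from the combinatorics of which innermost subsets admit an idempotent generator, not from a connectivity bound.

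Two smaller slips: the modules $\Jones_n(\delta)\cdot e_S$ are projective, not free; and your claim that the annular non-crossing constraint forces the cups of any configuration to lie within a single arc is false (e.g.\ cups $\{1',2'\}$, $\{4',5'\}$ with defects at $3',6'$ is a valid annular link state for $n=6$). It is the defects that are confined to a single cyclic interval, not the cups.
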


The cyclic groups (hence also the Jones annular algebras) cannot exhibit any kind of homological stability (for example $H_1(C_n;\mathbb{Z}) \cong \mathbb{Z}/n$), but the two families are nonetheless stably isomorphic.

We also prove the following.

\begin{theorem} \label{thm:JonesSroka} If $n$ is odd or $\delta$ is invertible, then the map $$\Tor_q^{\Jones_n(\delta)}(\t,\t) \to H_q(C_n;R)$$ of Theorem \ref{thm:Jones} is an isomorphism for all $q$.
\end{theorem}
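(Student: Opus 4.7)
The approach follows the template established by Sroka for the Temperley-Lieb algebra and by Boyde for the Brauer algebra. In the proof of Theorem \ref{thm:Jones}, one constructs a chain complex of $\Jones_n(\delta)$-modules that resolves $\t$ through degree $n/2-3$ and whose tensor with $\t$ over $\Jones_n(\delta)$ computes $H_*(C_n;R)$ in this range; the goal here is to show that, under either hypothesis, this complex is in fact fully acyclic, upgrading the range-wise comparison to an isomorphism in all degrees. Concretely, it suffices to show that the two-sided ideal $I = I_{\leq n-1}$ satisfies $\Tor_*^{\Jones_n(\delta)}(\t, I) = 0$, since then the long exact sequence induced by $0 \to I \to \Jones_n(\delta) \to RC_n \to 0$ collapses and a Grothendieck spectral sequence argument for the composition of $-\otimes_{\Jones_n(\delta)} RC_n$ with $-\otimes_{RC_n}\t$ yields the desired identification of Tor groups.

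The two cases are handled by different chain-level null-homotopies. When $\delta$ is invertible, the key elements are the idempotents $e_i := \delta^{-1} U_i \in \Jones_n(\delta)$, which satisfy $e_i \cdot d = d$ whenever $d$ has a left-hand cup at positions $(i,i+1)$. Multiplication by $e_i$ and its cyclic rotates produces splittings of the boundary operator on each stratum of the natural filtration of $I$ by number of left-to-right connections, and these assemble into a contracting homotopy, in the spirit of the classical invertible-parameter arguments for $\TL_n(\delta)$ and $\Brauer_n(\delta)$. When $n$ is odd, the number of left-to-right connections of any annular diagram has the same parity as $n$ and is therefore odd; in particular, the filtration of $I$ has no zero-stratum, eliminating the principal obstruction to contractibility. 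Combining this parity constraint with Sroka's sign-based null-homotopy, suitably adapted to the rotational symmetry of the annular setting, yields the required contraction without dividing by $\delta$.

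The chief technical obstacle is the Dehn-twist identification distinguishing $\Jones_n(\delta)$ from $\TL_n(\delta)$: a single basis element admits multiple non-isotopic pictorial representatives, and any chain-level construction must descend to this quotient and commute with the right $C_n$-action (rotation) on annular diagrams. A secondary subtlety, absent from the Temperley-Lieb analogue, is that the augmentation target is the non-trivial algebra $RC_n$ rather than the ground ring; the contracting homotopy must therefore preserve the cyclic-group structure on the top stratum (annular diagrams with all $n$ left-to-right connections) rather than collapse it entirely, which means the splittings produced in the invertible and odd cases must be built to be $C_n$-equivariant modulo the lower strata.
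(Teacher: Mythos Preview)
Your plan diverges from the paper and rests on a misreading of the setup. The Mayer--Vietoris complex for the cover $\{J_i\}$ is \emph{always} acyclic (Lemma~\ref{lem:MVac}); what limits Theorem~\ref{thm:Jones} to a finite range is only that its terms are known to be projective up to the height $h$ of the cover. The final clause of Theorem~\ref{thm:main} says that when $h$ equals the width $w$, the comparison map is an isomorphism in all degrees. The paper's proof of Theorem~\ref{thm:JonesSroka} therefore reduces to a single observation, contained in Lemma~\ref{lem:JIdempotents}: the only innermost $T\subset C_n$ for which $\bigcap_{i\in T} J_i$ can fail to be principal idempotent is the case $n$ even and $T$ equal to the set of all even (or all odd) residues. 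If $n$ is odd this case cannot occur; if $\delta$ is invertible, one rescales the obvious generator by a power of $\delta^{-1}$ to obtain an idempotent even in that case. No contracting homotopies, no equivariance conditions, and no Dehn-twist bookkeeping are needed.

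Your alternative route---showing $\Tor^{\Jones_n(\delta)}_*(\t,I)=0$ directly via chain-level null-homotopies---could in principle be made to work, but as written it is not a proof. The odd-$n$ case is left at ``Sroka's sign-based null-homotopy, suitably adapted,'' which is precisely the step requiring justification. The parity observation you make (diagrams in $I$ have an odd, hence positive, number of through-strands when $n$ is odd) is correct and is morally why the odd case succeeds, but the paper converts this into the clean combinatorial statement that an innermost subset of $C_n$ with $n$ odd never has full moral support, so the retraction of Lemma~\ref{lem:Jrtrct} is always available. Finally, your concerns about $C_n$-equivariance and the Dehn-twist quotient are artifacts of working with pictures and explicit homotopies; the paper works with partitions and left ideals throughout, where these issues do not arise.
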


Theorems \ref{thm:Jones} and \ref{thm:JonesSroka} are analogues for the Jones annular algebras of Sroka's results on the Temperley-Lieb algebras \cite{Sroka}. Theorem \ref{thm:JonesSroka} can also be deduced using the author's earlier result \cite[Theorem 1.7]{Boyde}, and something like this is done (in slightly more representation-theoretic language) in a sequel to this paper \cite[Corollary 8.5]{BoydeCellular}.

\subsection{Methods}

Each of our two main theorems asserts that a certain map of algebras $A \to B$ is a homology isomorphism in a range. What we are doing in each case is
\begin{itemize}
    \item constructing a partial projective resolution of $B$ as an $A$-module, and
    \item using this as input to a change of rings spectral sequence.
\end{itemize}

In practice, we will make (the usual) more elementary argument (see the proof of Theorem \ref{thm:MN}) instead of using the change of rings spectral sequence. The range of degrees in which we succeed in constructing the resolution will become our stability range.

For a positive integer $w$ we will write $\underline{w}$ for the set $\{1, \dots , w\}$.

\begin{definition} \label{def:IdempotentCover} Let $A$ be an $R$-algebra, let $I$ be a twosided ideal of $A$, and let $w \geq h \geq 1$. An \emph{idempotent (left) cover} of $I$ of \emph{height $h$} and \emph{width $w$} is a finite collection of left ideals $J_1, \dots , J_w$ of $A$, which cover $I$ in the sense that $J_1 + \dots + J_w = I$, and such that for each $S \subset \underline{w}$ with $\norm{S} \leq h$, the intersection $$\bigcap_{i \in S} J_i$$ is either zero or is a principal left ideal generated by an idempotent. If $I$ is free as an $R$-module, then an idempotent cover is said to be \emph{$R$-free} if there is a choice of $R$-basis for $I$ such that each $J_i$ is free on a subset of this basis.
\end{definition}

Our main technical theorem is as follows.

\begin{theorem} \label{thm:main} Let $A$ be an augmented $R$-algebra with trivial module $\t$. Let $I$ be a twosided ideal of $A$ which is free as an $R$-module and acts trivially on $\t$. Suppose that there exists an $R$-free idempotent left cover of $I$ of height $h$. Then the natural map $$\Tor_q^A(\t,\t) \to \Tor_q^{\faktor{A}{I}}(\t,\t)$$ is an isomorphism for $q \leq h$, and a surjection for $q = h+1$. If $h$ is equal to the width $w$ of the cover, then this map is an isomorphism for all $q$.
\end{theorem}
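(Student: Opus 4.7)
The plan is to build a partial projective $A$-resolution of $A/I$ from the idempotent cover and then to feed this into the elementary change-of-rings argument of Theorem \ref{thm:MN}. The intersections $J_S := \bigcap_{i \in S} J_i$ assemble, via alternating sums of the inclusions $J_S \hookrightarrow J_{S \setminus \{j\}}$, into a \v{C}ech-type complex $C_\bullet$ with $C_k := \bigoplus_{|S| = k+1} J_S$, augmented by the map $C_0 \to I$. The crucial technical step is verifying that this augmented complex is exact. For this I will fix, using the $R$-freeness hypothesis, an $R$-basis $B$ of $I$ such that each $J_i$ is free on some $B_i \subseteq B$; then each $J_S$ is free on $B_S := \bigcap_{i \in S} B_i$, and the augmented \v{C}ech complex decomposes, as a complex of $R$-modules, into a direct sum over $b \in B$ of the augmented simplicial chain complex of the full simplex on the vertex set $S(b) := \{i : b \in B_i\}$. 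Each summand is contractible, so the \v{C}ech complex is acyclic over $R$, and therefore over $A$.

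By the height-$h$ hypothesis, each nonzero $J_S$ with $|S| \leq h$ equals $A e_S$ for an idempotent $e_S \in I$ and is therefore $A$-projective as a direct summand of $A$. Truncating the \v{C}ech complex at $C_{h-1}$ and splicing with the short exact sequence $0 \to I \to A \to A/I \to 0$ yields a partial projective $A$-resolution of $A/I$ of length $h$. Since $I$ acts trivially on $\t$, each such $e_S$ annihilates $\t$, so $\t \otimes_A A e_S = 0$, and tensoring the resolution with $\t$ kills every term of positive homological degree. This is precisely the form of input that Theorem \ref{thm:MN} requires, and invoking it gives the claimed isomorphism range together with the surjection at $q = h - 1$.

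For the final assertion, observe that when $h = w$ there are no subsets $S \subseteq \underline{w}$ with $|S| > w$, so the \v{C}ech complex terminates naturally at $C_{w-1}$ with every term projective. The construction above is then a genuine projective $A$-resolution of $A/I$, and Theorem \ref{thm:MN} yields an isomorphism in every degree. The main obstacle in the argument is the exactness step of the first paragraph: the simplicial decomposition over basis elements rests essentially on the $R$-freeness hypothesis, and this is where the combinatorial content of the cover enters. Everything downstream --- projectivity of $C_k$ for $k \leq h-1$, the vanishing $\t \otimes_A A e_S = 0$, and the invocation of Theorem \ref{thm:MN} --- is formal.
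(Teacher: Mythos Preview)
Your proposal is correct and follows essentially the same route as the paper: your \v{C}ech complex is precisely the paper's Mayer--Vietoris complex (up to a harmless index shift and the splicing with $0 \to I \to A \to A/I \to 0$, which the paper absorbs by setting $C_0 = A$ and $C_{-1} = A/I$), your basis-by-basis decomposition into augmented simplex chain complexes is Lemma~\ref{lem:MVac}, and your projectivity and vanishing observations are Lemmas~\ref{lem:IdempCoverGivesProjCompl} and~\ref{lem:idempotentTensor}. The only point worth tightening is the final clause: Theorem~\ref{thm:MN} as stated gives a range, so for $h = w$ you should say explicitly that the resolution is then acyclic, hence $(h'-1)$-connected for every $h'$, and apply Theorem~\ref{thm:MN} with $h'$ arbitrary.
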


This theorem will be proven in Section \ref{subs:StableIso}; it is essentially the combination of Theorem \ref{thm:MN} and Proposition \ref{prop:MVisEllProjRes} in the case $M=N=\t$. We should emphasise that this theorem is fairly elementary, and almost certainly already known in some guise. It nonetheless seems to be quite an effective tool.



The idea is as follows. Given a cover as in Definition \ref{def:IdempotentCover}, one can always form what we will call a \emph{Mayer-Vietoris complex} of $A$-modules $C_*$ (Subsection \ref{subsection: MV}), where the module $C_p$ in degree $p$ is the direct sum of the $p$-fold intersections of the ideals of the cover. The prototype is Sroka's \emph{cellular Davis complex} \cite{Sroka}, which in our language is the Mayer-Vietoris complex associated to a certain cover of the augmentation ideal of a Temperley-Lieb algebra, with quotient $R$.

More precisely, this Mayer-Vietoris complex $C_*$ of $\faktor{A}{I}$ over $A$ is a chain complex of $A$-modules with the following three properties (Proposition \ref{prop:MVisEllProjRes}):
\begin{itemize}
    \item The complex $C_*$ is exact provided that the cover was free (Lemma \ref{lem:MVac}),
    \item the terms $C_p$ are projective in degrees $p \leq h$, since principal ideals generated by idempotents are in particular projective as modules, and
    \item in the same range the coinvariants $X \otimes_A C_p$ vanish for any right $A$-module $X$ on which $I$ acts trivially (Lemma \ref{lem:idempotentTensor}).
\end{itemize}
Once one has these properties, Theorem \ref{thm:main} follows by abstract homological algebra. The key point is that the height $h$ of the cover becomes the range in which we can construct an explicit partial projective resolution, and hence the range in which we ultimately get an isomorphism.

From this point of view, the reason for the `$n$ odd results' (Theorem \ref{thm:JonesSroka}, as well as the $n$ odd results of \cite{Sroka} and \cite{Boyde}) is that in those situations the relevant Mayer-Vietoris complex is a genuine projective resolution (that is, $h=w$ in Theorem \ref{thm:main}). When $n$ is even, however, the highest-degree term of the complex fails to be projective, and the range in which we get an identification of the homology depends on the degree of this term.

The style of abstraction appearing in Definition \ref{def:IdempotentCover} and Theorem \ref{thm:main} is similar to that of the author's earlier paper \cite{Boyde} (especially Section 3 of that paper). In particular, the proof of Theorem \ref{thm:main} uses essentially the same algebra as that of \cite[Theorem 3.3]{Boyde}, but packaged instead in a way that abstracts Sroka's approach \cite{Sroka}. This makes it possible to prove genuine stability results that hold in a range of degrees depending on $h$. The manner of organising the data in \cite{Boyde} meant that one could not `get hold of' the height variable $h$ which is so key here. The reason for this is that \cite{Boyde} insists on working with covers by direct sums of ideals (i.e. where even the twofold intersections are trivial).

In a sequel \cite{BoydeCellular}, we will show that some of what we do here can be reframed in a more representation-theoretic way. The advantages of this reframing seem to be mainly conceptual - in particular, the methods we use here seem to be at least as efficient. We therefore think of this paper as using topological methods to get results, and \cite{BoydeCellular} as exploring the extent to which the statements of those results can be made representation-theoretic.

\subsection*{Acknowledgements}

Special thanks are due to an anonymous referee for many helpful comments, which have greatly improved the clarity of the exposition, simplified several proofs (especially that of Proposition \ref{prop:SpecSeq}), and improved the ranges in the main theorems by $2$. I would also like to thank Richard Hepworth for his early encouragement. The author's postdoc is funded by Gijs Heuts' ERC Starting Grant “Chromatic homotopy theory of spaces”, grant no. 950048.

\section{Algebra}

In this section we will prove the main technical result, Theorem \ref{thm:main}.

\subsection{The Mayer-Vietoris Complex} \label{subsection: MV}

Let $A$ be an $R$-algebra, $M$ be an $A$-module, and $N \subset M$ an $A$-submodule. Let $N_1, \dots , N_w$ be submodules of $N$ such that $$N_1 + \dots + N_w = N.$$

In this situation, we have a `Mayer-Vietoris' chain complex of left $A$-modules $C_*$, which we will now describe (for the classical version see \cite[Propositions 15.2 and 15.18]{BottTu}).

We will be interested in this complex in the case that $N_i=J_i$ is an idempotent cover of an ideal $N=I$ of $M=A$. It is essentially an abstract version of Sroka's \emph{cellular Davis complex} \cite[Definition 8]{Sroka}.

We set
$$C_p := \bigoplus_{\substack{S \subset \underline{w} \\ \norm{S}=p}} \bigcap_{i \in S} N_i$$ for $p = 1, \dots , w$, with $C_0 := M$, and (the augmentation) $C_{-1} := \faktor{M}{N}$. We adopt the convention that $C_p=0$ for $p>w$.

The differential $d_p:C_p \to C_{p-1}$ is defined as follows. For $p=0$ it is just the projection $M \to \faktor{M}{N}$. For $p=1$ it is the direct sum of the inclusions. For $p \geq 2$, we define $d_p$ on the summand $\bigcap_{i \in S} N_i$ to be the map $$\bigcap_{i \in S} N_i \to \bigoplus_{j \in S} \bigcap_{i \in S \setminus \{j\}} N_i,$$ $$x \mapsto \sum_{j \in S} (-1)^{\#(S,j)}\iota_{(S,j)}(x),$$ where $\#(S,j)$ is the number of elements of $S$ which are less than $j$, and $\iota_{(S,j)}$ is the inclusion $\bigcap_{i \in S} N_i \to \bigcap_{i \in S \setminus \{j\}} N_i$.

Although we presented $C_0$ and $d_1$ as special cases, they are not. One may think of $C_0$ as the intersection of \emph{none} of the $J_i$, and under this convention the definition of $d_1$ coincides with the definition of the higher differentials.

\begin{lemma} \label{lem:MVcx} The Mayer-Vietoris complex associated to a cover is a chain complex. That is, the differential satisfies $d^2=0$.
\end{lemma}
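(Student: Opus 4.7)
The plan is to verify $d^2 = 0$ by the familiar alternating-sign bookkeeping from simplicial chain complexes, using the remark immediately after the definition to unify the differentials. Specifically, if we view $C_0 = M$ as $\bigcap_{i \in \emptyset} N_i$, then the stated formula for $d_p$ applies uniformly for all $p \geq 1$, and only the composite $d_0 \circ d_1$ requires separate handling.

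For $p \geq 2$, I would fix $S \subset \underline{w}$ with $\norm{S} = p$ and $x \in \bigcap_{i \in S} N_i$. Then $d_{p-1}(d_p(x))$ is a double sum over ordered pairs $(j,k)$ with $j \in S$ and $k \in S \setminus \{j\}$, contributing the inclusion of $x$ into the summand $\bigcap_{i \in S \setminus \{j,k\}} N_i$ with sign $(-1)^{\#(S,j) + \#(S \setminus \{j\},k)}$. Each unordered pair $\{j,k\} \subset S$ is counted twice; assuming $j < k$, one computes
\[
\#(S,j) + \#(S \setminus \{j\},k) = \#(S,j) + \#(S,k) - 1,
\]
because $j \in S$ lies below $k$ and has been removed, whereas
\[
\#(S,k) + \#(S \setminus \{k\},j) = \#(S,k) + \#(S,j),
\]
because removing $k > j$ does not affect the count of elements of $S$ below $j$. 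The two signs differ by a factor of $-1$, and the contributions cancel in pairs.

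For the composite $d_0 \circ d_1$, the image of $d_1$ consists of sums $\sum_i x_i$ with $x_i \in N_i$, which lie in $N = N_1 + \cdots + N_w \subseteq M$ by construction, and $d_0$ is the projection $M \to M/N$; hence this composite vanishes.

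There is no real obstacle: the argument is entirely formal and does not use the idempotence hypotheses on the $N_i$, nor any structure beyond that of $N_1, \dots, N_w$ as submodules of $M$ with sum $N$. The idempotence will only enter later, in identifying the homology of this complex or in relating it to an honest projective resolution.
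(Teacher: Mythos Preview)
Your proof is correct and essentially identical to the paper's own argument: both handle $d_0 \circ d_1$ separately via the observation that the image of $d_1$ lands in $N = \ker d_0$, and both verify $d_{p-1} \circ d_p = 0$ for $p \geq 2$ by the standard simplicial sign cancellation, computing that the two contributions from an unordered pair $\{j,k\} \subset S$ carry opposite signs. Your closing remark that idempotence plays no role here is also in line with the paper, which states the lemma for arbitrary covers.
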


The proof is standard, and exactly parallels Sroka's proof that his cellular Davis complex is a chain complex \cite[Lemma 9]{Sroka}.

\begin{proof} Since the $N_i$ are all contained in $N$, $d_{-1} \circ d_0 = 0$.

For $p \geq 1$, it suffices to argue that the restiction of $d_{p-1} \circ d_p$ to each summand $\bigcap_{i \in S} N_i$ (with $\norm{S}=p$) of $C_p$ is zero.

Fixing such a set $S$, let $x \in \bigcap_{i \in S} N_i \subset C_p$. We compute:
\begin{align*}
    d_{p-1} \circ d_p (x) & = d_{p-1}(\sum_{j \in S} (-1)^{\#(S,j)}\iota_{(S,j)}(x)) \\
    &= \sum_{i \in S \setminus\{j\}} \sum_{j \in S} (-1)^{\#(S \setminus \{j\},i) + \#(S,j)}\iota_{(S \setminus \{j\},i)}\iota_{(S,j)}(x) \\
    &= \sum_{i \neq j \in S} (-1)^{\#(S \setminus \{j\},i) + \#(S,j)}\iota_{(S \setminus \{j\},i)}\iota_{(S,j)}(x)
\end{align*}
We have $$\iota_{(S \setminus \{j\},i)}\iota_{(S,j)} = \iota_{(S \setminus \{i\},j)}\iota_{(S,i)},$$ so it suffices to show that $$(-1)^{\#(S \setminus \{j\},i) + \#(S,j)}  = - (-1)^{\#(S \setminus \{i\},j) + \#(S,i)}.$$

This holds because $i<j$ implies that $\#(S \setminus \{i\},j) = \#(S,j) -1$, and $\#(S \setminus \{j\},i) = \#(S,i)$, and this completes the proof.
\end{proof}

\begin{lemma} \label{lem:MVac} If $N$ is free as an $R$-module on some basis $\mathcal{B}$, such that the $N_i$ are free $R$-modules on subsets of $\mathcal{B}$, then the Mayer-Vietoris complex is acyclic.
\end{lemma}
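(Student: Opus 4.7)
The plan is to reduce the statement to a purely combinatorial computation about the chain complex of a simplex, using the basis hypothesis to split the Mayer-Vietoris complex into manageable pieces.

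First I would establish two preliminary facts that follow directly from the freeness assumption. Let $\mathcal{B}_i \subset \mathcal{B}$ be the basis of $N_i$. Expanding any element of $\bigcap_{i \in S} N_i$ in the basis $\mathcal{B}$ and intersecting the support conditions shows that $\bigcap_{i \in S} N_i$ is the free $R$-module on $\mathcal{B}_S := \bigcap_{i \in S} \mathcal{B}_i$. Similarly, since $N_1 + \cdots + N_w = N$ and $\mathcal{B}$ is a basis for $N$, every $b \in \mathcal{B}$ must lie in at least one $\mathcal{B}_i$; write $\mathcal{S}(b) := \{i : b \in \mathcal{B}_i\}$, which is therefore a nonempty subset of $\underline{w}$.

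Next I would observe that the $R$-linear differentials act on basis elements in a way that preserves the label $b \in \mathcal{B}$: the inclusion $\iota_{(S,j)}$ sends a basis element $b$ of $\bigcap_{i \in S} N_i$ to the same basis element $b$ of $\bigcap_{i \in S \setminus \{j\}} N_i$. To handle the nuisance contributed by $C_0 = M$ and $C_{-1} = M/N$ (which are not themselves controlled by the basis of $N$), I would replace $C_*$ by its subcomplex $C'_*$ given by $C'_p = C_p$ for $p \geq 1$, $C'_0 = N$, and $C'_{-1} = 0$. The quotient complex is $M/N$ concentrated in degrees $0$ and $-1$ with the identity differential, which is acyclic, so acyclicity of $C_*$ is equivalent to acyclicity of $C'_*$.

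The complex $C'_*$ now decomposes as a direct sum over $\mathcal{B}$ of subcomplexes $C'^{(b)}_*$, where in degree $p \geq 1$ the summand is $\bigoplus_{S \subset \mathcal{S}(b),\, \norm{S}=p} R \cdot b$, and in degree $0$ it is the single copy $R \cdot b$. Comparing with the convention that a subset of size $p$ corresponds to a $(p-1)$-simplex, one recognises $C'^{(b)}_*$ as the augmented simplicial chain complex (shifted up by $1$) of the full simplex on the vertex set $\mathcal{S}(b)$. Since $\mathcal{S}(b)$ is nonempty, this simplex is contractible and its augmented chain complex is acyclic, hence each $C'^{(b)}_*$ is acyclic and so is their direct sum.

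I expect the only real subtlety to be the bookkeeping in the second step — carefully justifying that truncating $M$ to $N$ and killing $M/N$ is harmless, and then matching the MV signs $(-1)^{\#(S,j)}$ with the standard simplicial boundary so that each $C'^{(b)}_*$ really is a shift of the simplex chain complex rather than something merely of the same ranks. Once the sign identification is made, the rest is immediate from the contractibility of a simplex.
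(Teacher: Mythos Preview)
Your proposal is correct and follows essentially the same route as the paper: both arguments decompose the complex as a direct sum over basis elements $b \in \mathcal{B}$ and identify each summand with the (shifted) augmented chain complex of the simplex on $\{i : b \in \mathcal{B}_i\}$. Your handling of degrees $0$ and $-1$ via the subcomplex $C'_*$ and the short exact sequence of complexes is a slightly cleaner alternative to the paper's direct check of exactness in those degrees, but the core idea is identical.
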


Again, the proof runs exactly parallel to Sroka's proof that his cellular Davis complex is acyclic \cite[Theorem 10]{Sroka}.

\begin{proof} By construction, $d_0:M \to \faktor{M}{N}$ is a surjection, so the Mayer-Vietoris complex is exact in degree $-1$. Exactness in degree $0$ follows from the assumption that $N_1 + \dots + N_w = N$. We may therefore restrict attention to $p \geq 1$.

Write $\mathcal{B}_i \subset \mathcal{B}$ for the basis of $N_i$. Then each intersection $\bigcap_{i \in S} N_i$ is free on the corresponding intersection $\bigcap_{i \in S} \mathcal{B}_i$.

For each $v \in \mathcal{B}$, and $S \subset \underline{w}$, let $\chi_v(S)$ be defined as follows.
\begin{itemize}
    \item If $v \in \bigcap_{i \in S} \mathcal{B}_i$, then $\chi_v(S) \cong R$ is the submodule of $\bigcap_{i \in S} N_i$ generated by $v$.
    \item If $v \not\in \bigcap_{i \in S} \mathcal{B}_i$, then $\chi_v(S) = 0$.
\end{itemize}

It is then tautological that for each $S \subset \underline{w}$ we have $$\bigcap_{i \in S} N_i = \bigoplus_{v \in \mathcal{B}} \chi_v(S),$$ so we get a decomposition of $C_p$ as an $R$-module: $$C_p = \bigoplus_{\substack{S \subset \underline{w} \\ \norm{S} = p}} \bigcap_{i \in S} N_i = \bigoplus_{\substack{S \subset \underline{w} \\ \norm{S} = p}} \bigoplus_{v \in \mathcal{B}} \chi_v(S) = \bigoplus_{v \in \mathcal{B}} \bigoplus_{\substack{S \subset \underline{w} \\ \norm{S} = p}}  \chi_v(S) = \bigoplus_{v \in \mathcal{B}} C^v_p,$$ where we define $C^v_*$ to be the chain complex with $C^v_p := \bigoplus_{\substack{S \subset \underline{w} \\ \norm{S} = p}}  \chi_v(S)$.

The boundary map $d_p$ is a linear combination of inclusions, so it automatically respects this decomposition. That is, as chain complexes in $R$-modules we have $$C_* = \bigoplus_{v \in \mathcal{B}} C^v_*,$$ in degrees $\geq 1$, and in degree zero $\bigoplus_{v \in \mathcal{B}} C^v_* = N = \mathrm{Im}(d_1) \subset M$, so it suffices to establish that each $C^v_*$ is acyclic.

Let $S(v)$ be the subset of $\underline{w}$ consisting of those $i$ such that $v \in \mathcal{B}_i$. Then, notice that $C^v_*$ is exactly the augmented chain complex (over $R$, shifted up by one degree) of the simplex $\Delta^{\norm{S(v)}-1}$. This complex is acyclic, so we are done.
\end{proof}

\begin{lemma} \label{lem:idempotentTensor} Let $X$ be a right $A$-module. Let $J$ be a left ideal of $A$ which is generated by idempotents and acts trivially on $X$. Then we have $X \otimes_A J = 0$. \end{lemma}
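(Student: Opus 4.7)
The plan is a direct tensor-product manipulation, using idempotency to push an element of $J$ across the tensor, where it then annihilates $X$. The hypothesis that $J$ acts trivially on $X$ means $X \cdot j = 0$ for every $j \in J$.

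First I would reduce to simple tensors. Since $X \otimes_A J$ is generated (as an abelian group) by elementary tensors $x \otimes j$, it suffices to show each such tensor is zero. Next, because $J$ is generated as a left ideal by a family of idempotents $\{e_\lambda\}_{\lambda \in \Lambda} \subset J$, any $j \in J$ may be written as a finite sum $j = \sum_k a_k e_{\lambda_k}$ with $a_k \in A$, and by bilinearity it suffices to handle a tensor of the form $x \otimes a e$ with $e \in J$ idempotent.

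The key calculation is then
\begin{equation*}
x \otimes a e \;=\; (xa) \otimes e \;=\; (xa) \otimes e \cdot e \;=\; (xa)e \otimes e \;=\; 0,
\end{equation*}
where the first equality uses the $A$-balanced relation in $X \otimes_A J$, the second uses $e = e^2$, the third pushes the rightmost $e$ across the tensor, and the final step uses that $e \in J$ and $X \cdot J = 0$, so $(xa)e = 0$.

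The argument is essentially a one-line calculation; the only thing to be careful about is that the idempotent $e$ must itself lie in $J$ (which is guaranteed by the hypothesis that $J$ is generated by idempotents), so that $Xe \subseteq XJ = 0$. There is no real obstacle here, so I would not expect any subtlety beyond correctly writing the generators of $J$ and applying the $A$-balanced property of $\otimes_A$.
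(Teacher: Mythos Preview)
Your proof is correct and follows essentially the same approach as the paper: reduce to tensors of the form $x \otimes a e$ with $e$ an idempotent generator, use $e = e^2$ to slide a copy of $e$ (or, in the paper, the whole element $a e$) across the tensor, and then invoke $X \cdot J = 0$. The only cosmetic difference is that the paper moves $\alpha e$ across in one step rather than first moving $a$ and then one factor of $e$.
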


\begin{proof} It suffices to show that $x \otimes \alpha e = 0$ for $e$ an idempotent generator of $J$, $x \in X$ and $\alpha \in A$. We have $x \otimes \alpha e = x \otimes \alpha e^2 = x \cdot (\alpha e) \otimes e,$ which is zero since the action is trivial, as required. \end{proof}

For an $A$-module $X$, we will say that a \emph{partial projective resolution of $X$} of \emph{length $h$} is an exact sequence
$$P_h \to P_{h-1} \to \dots \to P_1 \to P_0 \to X \to 0,$$
where each $P_i$ is projective. In these terms, the results of this subsection can be summarised as follows.

\begin{proposition} \label{prop:MVisEllProjRes} Let $A$ be an $R$-algebra, let $I$ be a twosided ideal of $A$, and let $J_1, \dots ,J_w$ be an $R$-free idempotent left cover of $I$ of height $h \leq w$. The truncation $C_*^{\leq h}$ in degree $h$ of the Mayer-Vietoris complex associated to $J_1, \dots , J_w$ is a length-$h$ partial projective resolution of $\faktor{A}{I}$, with $C_0 = A$. This partial projective resolution has the additional property that $X \otimes_A C_p = 0$ for $p \geq 1$ for any right $A$-module $X$ on which $I$ acts trivially. If $h=w$ then $C_*^{\leq h}=C_*$ is a projective resolution of $\faktor{A}{I}$.
\end{proposition}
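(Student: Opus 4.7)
The plan is to deduce the proposition by directly assembling the three preceding lemmas; no step involves more than bookkeeping. The projectivity claim --- that $C_*^{\leq h}$ is a projective complex over $\faktor{A}{I}$ with $C_0 = A$ --- is exactly Lemma \ref{lem:IdempCoverGivesProjCompl}, which already packages the fact that principal left ideals generated by an idempotent are projective as left $A$-modules. Likewise, the final clause of the proposition is immediate: if $h = w$ then $C_*^{\leq h} = C_*$ by definition of the truncation, and Lemma \ref{lem:MVac} gives acyclicity, invoking precisely the $R$-freeness hypothesis on the cover.

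For the $(h-1)$-connectivity assertion, I would apply Lemma \ref{lem:MVac} to obtain acyclicity of the full complex $C_*$. The modules $C_{-1}, C_0, \ldots, C_h$ and the differentials $d_0, \ldots, d_h$ of $C_*^{\leq h}$ coincide with those of $C_*$, so the homology groups $H_p(C_*^{\leq h})$ for $p \leq h-1$ are computed by the same data as $H_p(C_*)$ and therefore vanish. The only care required is in the indexing: the augmentation $C_{-1} = \faktor{M}{N} = \faktor{A}{I}$ must be included for $(h-1)$-connectivity to mean what it needs to mean in the subsequent application.

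For the tensor-vanishing property, I would fix a right $A$-module $X$ on which $I$ acts trivially and inspect each summand of $C_p$ for $p \geq 1$. By the idempotent cover hypothesis, each summand $\bigcap_{i \in S} N_i$ with $\norm{S} = p \leq h$ is either zero or a principal left ideal generated by an idempotent; in the latter case it is a subideal of $I$ and therefore also acts trivially on $X$. Lemma \ref{lem:idempotentTensor} then yields $X \otimes_A \bigcap_{i \in S} N_i = 0$, and passing to the direct sum over $S$ gives $X \otimes_A C_p = 0$. This exhausts the claims; the proposition is simply a repackaging of the three lemmas into a form convenient for the proof of Theorem \ref{thm:main}, and no individual step presents a real obstacle.
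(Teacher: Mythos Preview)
Your proposal is correct and matches the paper's approach exactly: the paper simply states that the proposition follows by combining Lemmas \ref{lem:IdempCoverGivesProjCompl}, \ref{lem:MVac}, and \ref{lem:idempotentTensor}, and you have spelled out precisely how. Your care with the truncation indexing (that $H_p(C_*^{\leq h})$ agrees with $H_p(C_*)$ for $p \leq h-1$ because both use only $C_{p-1}, C_p, C_{p+1}$ with $p+1 \leq h$) and with the tensor-vanishing step (that each nonzero summand is a principal idempotent ideal contained in $I$, hence Lemma \ref{lem:idempotentTensor} applies) fills in exactly the bookkeeping the paper leaves implicit.
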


\begin{proof} Combine Lemmas \ref{lem:MVac} and \ref{lem:idempotentTensor} with the observation that a principal left ideal generated by an idempotent is a projective left $A$-module (this is standard, but see for example \cite[Lemma 3.1]{Boyde}).   
\end{proof}

In the wild, we will recognise principal idempotent ideals in the following manner.

\begin{lemma} \label{lem:retractIsIdemp} If $J$ is a left ideal of an $R$-algebra $A$ which is a retract of $A$ via a right multiplication map $A \xrightarrow{\cdot e} J$ for some element $e \in J$, then $J$ is the principal left ideal generated by $e$, and $e$ is idempotent. \end{lemma}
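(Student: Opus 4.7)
The plan is to unpack what it means for the right-multiplication map $A \xrightarrow{\cdot e} J$ to be a retraction of the inclusion $\iota: J \hookrightarrow A$. By definition, this means the composite $J \xrightarrow{\iota} A \xrightarrow{\cdot e} J$ is the identity on $J$, i.e.\ $j \cdot e = j$ for every $j \in J$. Note that $\cdot e: A \to A$ is automatically a left $A$-module map, and lands in $J$ precisely because $e \in J$ and $J$ is a left ideal, so the setup is consistent.

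From this identity $je = j$ for $j \in J$, the two claims fall out immediately. First, taking $j = e$ (which lies in $J$ by hypothesis) gives $e \cdot e = e$, so $e$ is idempotent. Second, since $e \in J$ and $J$ is a left ideal, we have $Ae \subseteq J$; conversely, any $j \in J$ satisfies $j = je \in Ae$, so $J \subseteq Ae$. Therefore $J = Ae$ is the principal left ideal generated by the idempotent $e$.

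There is essentially no obstacle here: the only thing to be careful about is the direction of the retraction (that the retracting map is $\cdot e$, not $\iota$) and the order of multiplication. Once the identity $je = j$ on $J$ is written down, both conclusions are one-line verifications.
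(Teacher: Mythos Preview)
Your proof is correct and follows essentially the same approach as the paper: both unpack the retraction condition to obtain $je = j$ for all $j \in J$, then read off idempotence from $j = e$ and principality from the identity applied to arbitrary $j$. The paper's version is simply terser, omitting the explicit verification of the two containments $Ae \subseteq J$ and $J \subseteq Ae$.
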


\begin{proof} Since $\cdot e$ is a retraction of the inclusion $J \to A$, we have $x \cdot e = x$ for all $x \in J$. This gives that $J$ is principal and generated by $e$. In particular, since $e$ is itself in $J$, we have $e \cdot e = e$, that is, $e$ is idempotent.
\end{proof}

\subsection{The stable isomorphism} \label{subs:StableIso}

\begin{proposition} \label{prop:SpecSeq} Let $M$ be a left $A$-module, let $N$ be a right $A$-module, and let $h>0$. Suppose that there exists a length-$h$ partial projective resolution $C_*^{\leq h}$ of $M$ with the property that $N \otimes_A C_p^{\leq h} = 0$ for $p \geq 1$. Then $$\Tor_q^{A}(N,M)=0$$ for $0< q \leq h$, and $N \otimes_A M \cong N \otimes_A C_0^{\leq h}$.
\end{proposition}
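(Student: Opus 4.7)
The natural approach is to feed $C_*$ into the hyper-Tor spectral sequence and read off the abutment from the hypothesis on $N \otimes_A C_*$. Since each $C_p$ is projective, $N \otimes_A^L C_*$ agrees with $N \otimes_A C_*$, and the standard hyperhomology spectral sequence takes the form
$$E^2_{p,q} = \Tor^A_p(N, H_q(C_*)) \Rightarrow H_{p+q}(N \otimes_A C_*).$$
The abutment is immediately computed: the assumption $N \otimes_A C_p = 0$ for $p \geq 1$ forces $N \otimes_A C_*$ to be concentrated in degree zero, so $H_0(N \otimes_A C_*) = N \otimes_A C_0$ and $H_p(N \otimes_A C_*) = 0$ for $p \geq 1$.

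On the $E^2$-page, the $(h-1)$-connectivity of the augmentation $C_* \to M$ gives $H_0(C_*) = M$ and $H_q(C_*) = 0$ for $1 \leq q \leq h-1$. Hence in the horizontal strip $0 \leq q \leq h-1$ the only potentially nonzero entries lie on the line $q = 0$, where $E^2_{p,0} = \Tor^A_p(N, M)$. The region $q \geq h$ is uncontrolled but, as the differential analysis below shows, cannot affect small total degrees.

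The main step is the differential analysis for the $q = 0$ line. An outgoing differential $d_r \colon E^r_{p,0} \to E^r_{p-r,\,r-1}$ lands in the vanishing strip for $2 \leq r \leq h$, while for $r \geq h+1$ its first coordinate $p - r$ is negative once $p$ is small enough. Incoming differentials into $(p, 0)$ come from entries with negative $q$-coordinate and so vanish automatically. Consequently $E^\infty_{p,0} = E^2_{p,0} = \Tor^A_p(N, M)$ in the relevant range. Convergence to the abutment then forces $\Tor^A_p(N, M) = 0$ for $1 \leq p \leq h-2$, and at $(p,q) = (0,0)$ it yields $N \otimes_A M = E^\infty_{0,0} = H_0(N \otimes_A C_*) = N \otimes_A C_0$. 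The only real obstacle is the bookkeeping for differentials that could connect the $q = 0$ line to the uncontrolled region $q \geq h$ above, which is precisely what constrains the range of vanishing.
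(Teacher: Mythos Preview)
Your argument is correct and is essentially the same as the paper's: your hyper-Tor spectral sequence is exactly the spectral sequence $^{I}E$ of the double complex $P_* \otimes_A C_*$ that the paper uses, run to the $E^2$-page, with the abutment computed directly from projectivity of $C_*$ (which is the collapse of the paper's $^{II}E$). The only cosmetic difference is that the paper augments $C_*$ by $C_{-1}=M$ and reads $\Tor_q^A(N,M)$ off the $p=-1$ column of $^{II}E$, whereas you leave $C_*$ unaugmented and read $\Tor_p^A(N,M)$ off the $q=0$ row of your $E^2$-page via $H_0(C_*)\cong M$.
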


\begin{proof} Let $C_*$ be an extension of $C_*^{\leq h}$ to a projective resolution of $M$ and form the tensor product $N \otimes_A C_*$. The homology in degree $q$, $H_q(N \otimes_A C_*)$ is by definition $\Tor_*^{A}(N,M)$, which vanishes in the claimed range since $N \otimes_A C_*$ does. Since $h > 0$, we have $N \otimes_A C_1 = N \otimes_A C_1^{\leq h} = 0$, so $H_0(N \otimes_A C_*) \cong N \otimes_A C_0^{\leq h}$. Since $H_0(N \otimes_A C_*) = \Tor_0^A(N,M) = N \otimes_A M$, the result follows.
\end{proof}

\begin{theorem} \label{thm:MN} Let $N$ be a left $A$-module, let $M$ be a right $A$-module, and let $h>0$. Let $I$ be a twosided ideal of $A$ which acts trivially on $M$ and $N$. Suppose that there exists a length-$h$ partial projective resolution $C_*^{\leq h}$ of $\faktor{A}{I}$, with $C_0^{\leq h} = A$ and $N \otimes_A C_p^{\leq h} = 0$ for $p \geq 1$. Then the natural map $$\Tor_q^A(N,M) \to \Tor_q^{\faktor{A}{I}}(N,M)$$ is an isomorphism for $q \leq h$ and a surjection for $q=h+1$.
\end{theorem}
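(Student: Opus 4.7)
The plan is to use Proposition \ref{prop:SpecSeq} to reduce the comparison of $\Tor^A(N,M)$ and $\Tor^{A/I}(N,M)$ to a chain-level identification of resolutions in low degrees. Applied to $C_*$ viewed as an $(h-1)$-connected projective complex over $A/I$, Proposition \ref{prop:SpecSeq} immediately gives $\Tor^A_q(N,A/I)=0$ for $0<q\leq h-2$, together with $N\otimes_A(A/I)\cong N$ (which is also immediate from $I$ acting trivially on $N$, since $C_0=A$).

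Next I would choose a projective resolution $P_*\to N$ of right $A$-modules and set $\bar P_*:=P_*\otimes_A(A/I)$, a complex of projective right $A/I$-modules whose homology is $\Tor^A_*(N,A/I)$. Thus $H_0(\bar P_*)=N$ and $H_q(\bar P_*)=0$ for $0<q\leq h-2$. In particular the complex $\bar P_{h-1}\to\cdots\to\bar P_0\to N\to 0$ is exact, so I can extend the truncation $\bar P_*^{\leq h-1}$ to a full projective $A/I$-resolution $\tilde P_*\to N$ by choosing a projective $\tilde P_h$ surjecting onto $\ker(\bar P_{h-1}\to\bar P_{h-2})$ and continuing to resolve in higher degrees. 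With this choice $\tilde P_i=\bar P_i=P_i\otimes_A(A/I)$ for $i\leq h-1$.

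The canonical projections $P_i\to P_i\otimes_A(A/I)=\tilde P_i$ for $i\leq h-1$ are maps of right $A$-modules; by projectivity of the $P_i$ and exactness of $\tilde P_*$ as an $A$-complex over $N$, they extend to a chain map $\psi:P_*\to\tilde P_*$ lifting the identity on $N$. Tensoring with $M$ over $A$ and using $\tilde P_*\otimes_A M=\tilde P_*\otimes_{A/I} M$ (valid because $I$ acts trivially on $M$) realises the natural map $\Tor^A(N,M)\to\Tor^{A/I}(N,M)$. Under the canonical identification $\tilde P_i\otimes_{A/I} M\cong P_i\otimes_A M$ for $i\leq h-1$, the map $\psi_i\otimes\mathrm{id}_M$ is the identity in these degrees, so the two chain complexes $P_*\otimes_A M$ and $\tilde P_*\otimes_{A/I} M$ agree through degree $h-1$. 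This yields the isomorphisms $\Tor^A_q(N,M)\cong\Tor^{A/I}_q(N,M)$ for $q\leq h-2$. For $q=h-1$, the cycles still agree, but the image of $P_h\otimes_A M$ inside $P_{h-1}\otimes_A M$ is contained in (and possibly strictly smaller than) the image of $\tilde P_h\otimes_{A/I} M$, because $\tilde P_h$ surjects onto all of $\ker(\bar P_{h-1}\to\bar P_{h-2})$ whereas $\bar P_h=P_h\otimes_A(A/I)$ need not. Hence the induced map on $H_{h-1}$ is a surjection.

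The main input is the Tor vanishing supplied by Proposition \ref{prop:SpecSeq}; once this is in hand, the extension of $\bar P_*^{\leq h-1}$ to a full $A/I$-resolution is formal, and the comparison on homology is just the observation that two chain complexes agreeing up to degree $h-1$ have matching homology through degree $h-2$ and a quotient relationship in degree $h-1$.
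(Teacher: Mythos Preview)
Your proposal is correct and follows essentially the same approach as the paper: both use Proposition~\ref{prop:SpecSeq} to deduce that $\bar P_*=P_*\otimes_A(A/I)$ is an $(h-2)$-connected free $A/I$-complex over $N$, and then identify $P_*\otimes_A M$ with $\bar P_*\otimes_{A/I}M$ to read off the conclusion. The only difference is cosmetic---the paper invokes directly the standard fact that such a partial free resolution computes $\Tor^{A/I}_q(N,M)$ for $q\leq h-2$ and surjects for $q=h-1$, while you spell this out by extending $\bar P_*^{\leq h-1}$ to a full resolution $\tilde P_*$ and comparing.
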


Note that the hypothesis that $I$ acts trivially on $M$ and $N$ means that $M$ and $N$ are $\faktor{A}{I}$-modules, and hence that the expression $\Tor_q^{\faktor{A}{I}}(N,M)$ is meaningful.

\begin{proof} We will argue that $\Tor_q^A(N,M)$ and $\Tor_q^{\faktor{A}{I}}(N,M)$ are the homology of the same chain complex.

Let $P_*$ be a free $A$-resolution of $N$. Then $\Tor_*^A(N,M)$ is the homology of $P_* \otimes_A M$. Since $I$ acts trivially on $M$, we may write $$P_* \otimes_A M \cong P_* \otimes_A (\faktor{A}{I} \otimes_{\faktor{A}{I}} M).$$

Then, since $P_*$ is free over $A$, the tensor product $P_* \otimes_A \faktor{A}{I}$ is free over $\faktor{A}{I}$. By Proposition \ref{prop:SpecSeq}, we have $\Tor_q^A(N,\faktor{A}{I})=0$ for $0<q \leq h$, which is to say that the homology of $P_* \otimes_A \faktor{A}{I}$ vanishes in degrees $0<q \leq h$.

By the same proposition, $$\Tor_0^A(N,\faktor{A}{I}) := N \otimes_A \faktor{A}{I} \cong N \otimes_A C_0 \cong N,$$ since $C_0 = A$ by assumption.

Thus, $P_*^{\leq h+1} \otimes_A \faktor{A}{I}$ is a length-$(h+1)$ partial projective resolution of $N$ over $\faktor{A}{I}$, so $P_* \otimes_A \faktor{A}{I} \otimes_{\faktor{A}{I}} M$ computes $\Tor_q^{\faktor{A}{I}}(N,M)$ for $q \leq h$, so we get the desired isomorphism for $q \leq h$.

To get the surjection in degree $h+1$, first add a free $\faktor{A}{I}$-module $X$ to $P_* \otimes_A \faktor{A}{I}$ in degree $h+2$, in such a way that the resulting complex $[P_* \otimes_A \faktor{A}{I}] \oplus X$ is exact in degree $h+1$ (i.e. to kill the kernel of the differential in degree $h+1$). By construction, taking homology of the complex $([P_* \otimes_A \faktor{A}{I}] \oplus X) \otimes_{\faktor{A}{I}} M$ computes $\Tor_q^{\faktor{A}{I}}(N,M)$ for $q \leq h+1$. The cokernel of the injection
$$P_* \otimes_A M \cong P_* \otimes_A \faktor{A}{I} \otimes_{\faktor{A}{I}} M \to ([P_* \otimes_A \faktor{A}{I}] \oplus X) \otimes_{\faktor{A}{I}} M$$
is just a copy of $X \otimes_{\faktor{A}{I}} M$ in degree $h+2$. In particular, this cokernel is zero in degrees $q \leq h+1$, so the surjectivity follows from the long exact sequence on homology associated to this short exact sequence of chain complexes.
\end{proof}

We are now ready to prove our main algebraic result.

\begin{proof}[Proof of Theorem \ref{thm:main}] Combine Theorem \ref{thm:MN} and Proposition \ref{prop:MVisEllProjRes}. \end{proof}

\section{Partition algebras}

We will think of the partition algebras $\Partition_n(\delta)$ in more or less the same way as Boyd-Hepworth-Patzt \cite[Definition 2.1]{BHPPartition}. We will use slightly different notation for our vertex set, inspired by Xi's paper \cite{Xi}. Xi proves that the partition algebras have a \emph{cellular} structure, and our approach is sometimes motivated by this, but we will not need to make it explicit.

We write $\mathbb{N}_0$ for the set of non-negative integers, and $\mathbb{N}$ for the set of positive integers.

\begin{definition} \label{def:Priming} Consider the set $\mathbb{N} \times \mathbb{N}_0$. We will write $(x,y) \in \mathbb{N} \times \mathbb{N}_0$ as 
$$x'{^{\dots}}'$$
i.e. as $x$ with a $y$-fold prime symbol. For example, $(3,2)$ is written $3''$, $(1,0)$ is written $1$, and $(5,4)$ is written $5''''$. For a subset $S$ of $\mathbb{N} \times \mathbb{N}_0$, let $S'$ be the set $\{(x,y+1) | (x,y) \in S\}$; in our notation, this is the set $\{x' | x \in S\}$.

For a fixed $n$, we write $\underline{n}$ for the subset $\{1,2, \dots, n\}$ of $\mathbb{N} \times \mathbb{N}_0$, so that $\underline{n}' = \{1',2', \dots, n'\}$ is in bijection with $\underline{n}$ via the map which `primes each element'.
\end{definition}

With this notation in hand, we define the partition algebras as follows.

\begin{definition} \label{def:partitionAlg} Let $R$ be a commutative ring with unit, let $\delta \in R$, and let $n \geq 1$. The \emph{partition algebra} $\Partition_n(\delta)$ is the free $R$-module on the set of partitions $\rho$ of $$\underline{n} \cup \underline{n}' = \{1,2,\dots,n,1',2', \dots, n'\}.$$
The multiplication is defined on partitions, then extended bilinearly, as follows. For partitions $\mu$ and $\nu$ of $\underline{n} \cup \underline{n}'$, let $\nu'$ be the partition of $(\underline{n} \cup \underline{n}')' = \underline{n}' \cup \underline{n}''$ corresponding to $\nu$ under the priming-unpriming correspondence of Definition \ref{def:Priming}. Let the \emph{composed partition}
$$\mu \circ \nu'$$
be the finest partition of $\underline{n} \cup \underline{n}' \cup \underline{n}''$ whose restriction to $\underline{n} \cup \underline{n}'$ is coarser than $\mu$ and whose restriction to $\underline{n}' \cup \underline{n}''$ is coarser than $\nu'$. Let $j \in \mathbb{N}_0$ be the number of parts of $\mu \circ \nu'$ containing only elements of $\underline{n}'$, and let $\mu * \nu$ be the partition of $\underline{n} \cup \underline{n}'$ obtained by restricting $\mu \circ \nu'$ to $\underline{n} \cup \underline{n}''$ and then `unpriming' $\underline{n}''$. The product $\mu \nu$ in the partition algebra is then defined to be $$\mu \nu = \delta^j \cdot \mu * \nu \in \Partition_n(\delta).$$
We will feel free to call $\mu * \nu$ the \emph{underlying partition} of the product $\mu \nu$, since it is well defined even if $\delta^j=0$.
\end{definition}

In this section we will construct an idempotent left cover of height $n-1$ for the the ideal $I_{\leq n-1}$ of the partition algebra $\Partition_n(\delta)$. Recall that $I_{\leq n-1}$ is the $R$-span of partitions having at most $n-1$ (i.e. fewer than the maximal number $n$) parts containing both primed and unprimed elements.

\begin{remark} \label{rmk: Partition trivial module} The partition algebra $\Partition_n(\delta)$ comes with a natural trivial module (equivalently, an augmentation), which we denote $\t$. This is a single copy of $R$ where partitions with (at least) $n$ parts containing both a primed and an unprimed element act by 1, and other partitions (i.e. those in $I_{\leq n-1}$) act by zero. Give the symmetric group algebra $R \Sigma_n$ its usual trivial module, which we also denote $\t$. Note that the quotient map $\Partition_n(\delta) \to R \Sigma_n$ given in the introduction respects these actions (as does the inclusion $R \Sigma_n \to \Partition_n(\delta)$), and since we have by construction that $I_{\leq n-1}$ acts trivially on $\t$, this quotient map induces an isomorphism $\faktor{\Partition_n(\delta)}{I_{\leq n-1}} \cong R \Sigma_n$ of augmented algebras.   
\end{remark}

\begin{definition} For $i$ in $\underline{n}$ let $K_i$ be the left ideal of $\Partition_n(\delta)$ spanned by partitions where the vertex $i'$ on the right is a singleton. For distinct elements $i<j$ in $\underline{n}$, let $L_{i,j}$ be the left ideal spanned by diagrams where the vertices $i'$ and $j'$ on the right belong to the same part of the partition. \end{definition}

\begin{lemma} \label{lem:cover} The ideals $K_i$ and $L_{i,j}$ cover $I_{\leq n-1}$.
\end{lemma}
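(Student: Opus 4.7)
The plan is to establish the two containments $I_{\leq n-1} \supseteq \sum_i K_i + \sum_{i<j} L_{i,j}$ and $I_{\leq n-1} \subseteq \sum_i K_i + \sum_{i<j} L_{i,j}$ separately, both essentially by case analysis on how primed vertices are distributed among the parts of a partition.

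For the easy direction, I would inspect a typical basis element of each $K_i$ and $L_{i,j}$. If $\rho$ is a partition with $i'$ a singleton, then $i'$ lies in a part containing no unprimed element, so at most $n-1$ of the primed vertices can belong to through-parts (parts containing both primed and unprimed elements); since each through-part contains at least one primed vertex, $\rho$ has at most $n-1$ through-parts and hence lies in $I_{\leq n-1}$. Similarly, if $i'$ and $j'$ share a part, then the $n$ primed vertices occupy at most $n-1$ distinct parts, so at most $n-1$ through-parts, giving $\rho \in I_{\leq n-1}$.

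For the reverse direction, I would take an arbitrary basis partition $\rho$ of $I_{\leq n-1}$, i.e.\ one with strictly fewer than $n$ through-parts, and argue by pigeonhole that $\rho$ itself already lies in some single $K_i$ or $L_{i,j}$. Write $k$ for the number of through-parts, so $k \leq n-1$, and split into two cases. If some primed vertex $i'$ lies in a part containing no unprimed element, then either that part is the singleton $\{i'\}$ (whence $\rho \in K_i$) or it contains another primed vertex $j'$ (whence $\rho \in L_{\min(i,j),\max(i,j)}$). Otherwise, every primed vertex sits in a through-part; since there are only $k < n$ through-parts and $n$ primed vertices, some through-part contains at least two primed vertices $i'$ and $j'$, and so $\rho \in L_{i,j}$.

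There isn't really a hard step; the only thing to be a little careful about is that $K_i$ and $L_{i,j}$ are defined as left ideals, so before the containment argument it would be worth noting in one sentence that the conditions "$i'$ is a singleton" and "$i'$ and $j'$ share a part" are preserved under left multiplication by any partition $\alpha$. This is immediate from the composition procedure of Definition \ref{def:partitionAlg}: the vertices $i''$ and $j''$ on the right of the composed diagram are only touched by $\nu'$, and not by $\mu$, so their joint component structure in $\nu$ is inherited unchanged by $\mu * \nu$.
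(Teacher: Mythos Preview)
Your proof is correct and follows essentially the same approach as the paper: show each $K_i$ and $L_{i,j}$ sits inside $I_{\leq n-1}$ by counting primed vertices in through-parts, and for the reverse direction show that any basis partition with fewer than $n$ through-parts already lies in a single $K_i$ or $L_{i,j}$ via a pigeonhole/case analysis on the primed vertices. Your case split (some primed vertex lies in a non-through-part vs.\ all primed vertices lie in through-parts) is a minor reorganisation of the paper's split (the restriction to $\underline{n}'$ has a non-singleton part vs.\ all its parts are singletons), but the logic is the same; your extra sentence verifying that the defining conditions for $K_i$ and $L_{i,j}$ are stable under left multiplication is a harmless clarification that the paper leaves implicit.
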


\begin{proof} It suffices to argue that
\begin{enumerate}
    \item each $K_i$ or $L_{i,j}$ is contained in $I_{\leq n-1}$, and
    \item any partition with fewer than $n$ parts containing both primed and unprimed elements belongs to some $K_i$ or to some $L_{i,j}$.
\end{enumerate}

For the first point, simply note that a partition belonging to $K_i$ or $L_{i,j}$ has at most $n-1$ parts that contain both primed and unprimed elements, so the partition itself belongs to $I_{\leq n-1}$. For the second point, suppose that a partition $\rho$ has fewer than $n$ parts with both primed and unprimed elements. If two primed elements belong to the same part of $\rho$ then $\rho$ lies in some $L_{i,j}$. If no two primed elements belong to the same part, then $\rho$ must have at least $n$ parts, and by assumption at least one of these parts cannot contain an unprimed element. It follows that the part in question is a singleton, and hence that $\rho$ lies in some $K_i$. This completes the proof. 
\end{proof}

Unsurprisingly, we will be concerned with iterated intersections of these ideals. The following lemma is immediate. Write $\underline{n}^2_{<}$ for the subset of $\underline{n} \times \underline{n}$ consisting of pairs $(i,j)$ with $i<j$ (i.e. the indexing set for the ideals $L_{i,j}$).

\begin{lemma} \label{lem:theFormOfTheIntersection} Let $S \subset\underline{n}$, and let $T \subset \underline{n}^2_{<}$. The intersection $$\bigcap_{i \in S} K_i \cap \bigcap_{(i,j) \in T} L_{i,j}$$ is the $R$-span of those partitions for which
\begin{itemize}
    \item $i'$ is a singleton whenever $i \in S$, and
    \item $i'$ and $j'$ lie in the same part of the partition whenever $(i,j) \in T$
\end{itemize}
In particular, the intersection is zero if and only if there exists $(i,j) \in T$ so that $i \in S$ or $j \in S$. \qed
\end{lemma}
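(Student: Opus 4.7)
The plan is to observe that both $K_i$ and $L_{i,j}$ are \emph{basis-defined} $R$-submodules of $\Partition_n(\delta)$: each is the $R$-span of an explicit subset of the standard basis of partitions of $\underline{n} \cup \underline{n}'$. Specifically, $K_i$ is spanned by the set $\mathcal{K}_i$ of partitions having $\{i'\}$ as a part, and $L_{i,j}$ is spanned by the set $\mathcal{L}_{i,j}$ of partitions in which $i'$ and $j'$ lie in a common part. A general linear-algebra fact then applies: when a free $R$-module is equipped with a fixed basis, the intersection of any family of submodules, each spanned by a subset of that basis, is itself spanned by the set-theoretic intersection of those subsets. This is immediate from the direct-sum decomposition induced by the basis.

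Applying this to $S$ and $T$ gives the first assertion directly: $\bigcap_{i \in S} K_i \cap \bigcap_{(i,j) \in T} L_{i,j}$ is the $R$-span of the partitions $\rho$ simultaneously lying in $\mathcal{K}_i$ for every $i \in S$ and in $\mathcal{L}_{i,j}$ for every $(i,j) \in T$, which unpacks to exactly the two bulleted conditions in the statement.

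For the ``in particular'' clause, I will treat the two directions separately. If $(i,j) \in T$ with $i \in S$, then any $\rho$ in the intersection must have $\{i'\}$ as a part while also requiring $i'$ to share a part with $j' \neq i'$, which is impossible; the case $j \in S$ is symmetric. Conversely, when no such conflict exists, I exhibit a single basis partition in the intersection: form the equivalence classes on $\underline{n}'$ generated by the relations $i' \sim j'$ for $(i,j) \in T$, adjoin the singletons $\{i'\}$ for $i \in S$ and $\{k'\}$ for each remaining primed vertex, and partition $\underline{n}$ arbitrarily. The hypothesis that no $(i,j) \in T$ meets $S$ guarantees that the $T$-generated classes are disjoint from $S'$, so these choices are consistent and yield a valid basis element of the intersection. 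I do not expect any real obstacle here: the entire argument is a bookkeeping exercise once the basis-defined structure of the $K_i$ and $L_{i,j}$ has been made explicit, which is why the author flags the lemma as immediate.
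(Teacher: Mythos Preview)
Your proposal is correct and matches the paper's treatment: the paper declares the lemma immediate and gives no proof, and your argument is exactly the natural elaboration one would supply --- use that the $K_i$ and $L_{i,j}$ are spanned by subsets of the standard partition basis, so intersections are spanned by intersections of those subsets, and then check the obvious obstruction/construction for the ``in particular'' clause. One minor stylistic point: your description of the witness partition is slightly redundant, since the $T$-generated equivalence classes already partition all of $\underline{n}'$ (so there is nothing to ``adjoin''); the real content is your observation that the hypothesis forces each $i'$ with $i \in S$ to be a singleton class.
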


\begin{lemma} \label{lem:PmapK} Let $S \subset\underline{n}$, and let $T \subset \underline{n}^2_{<}$. Let $$J = \bigcap_{i \in S} K_i \cap \bigcap_{(i,j) \in T} L_{i,j},$$ and let $a \in \underline{n} \setminus S$ be such that $S \cup \{a\}$ is a proper subset of $\underline{n}$. Choose $b \in \underline{n} \setminus (S \cup \{a\})$, and let $\mu$ be the partition of $\underline{n} \cup \underline{n}'$ whose parts are
\begin{itemize}
    \item the singleton $\{a'\}$,
    \item the triple $\{a, b, b'\}$
    \item the pair $\{i,i'\}$ for each $i \in \underline{n} \setminus \{a, b\}$.
\end{itemize}
Then either $K_{a} \cap J = 0$ or $J \cdot \mu \subset  K_{a} \cap J$.
\end{lemma}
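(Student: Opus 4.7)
The plan is to split on whether $K_a \cap J = 0$ using the explicit description of intersections in Lemma \ref{lem:theFormOfTheIntersection}. Since $K_a \cap J = \bigcap_{i \in S \cup \{a\}} K_i \cap \bigcap_{(i,j) \in T} L_{i,j}$, that lemma tells us $K_a \cap J = 0$ exactly when some $(i,j) \in T$ has $i$ or $j$ in $S \cup \{a\}$; if this happens the first alternative of the conclusion holds and we are done. Otherwise, we assume going forward that no pair in $T$ meets $S \cup \{a\}$, and we must show $\rho \cdot \mu \in K_a \cap J$ for every basis partition $\rho \in J$.

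Since $\rho \cdot \mu = \delta^k (\rho * \mu)$ for some $k \geq 0$ and $K_a \cap J$ is an $R$-submodule, it suffices by Lemma \ref{lem:theFormOfTheIntersection} to verify three properties of the underlying partition $\rho * \mu$: that $a'$ is a singleton, that $i'$ is a singleton for each $i \in S$, and that $i'$ and $j'$ lie in the same part for each $(i,j) \in T$. Unwinding Definition \ref{def:partitionAlg}, the partition $\mu'$ on $\underline{n}' \cup \underline{n}''$ has parts $\{a''\}$, $\{a', b', b''\}$, and $\{i', i''\}$ for $i \in \underline{n} \setminus \{a,b\}$, and the composition rule merges parts of $\rho$ and $\mu'$ whenever they share an element of $\underline{n}'$.

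The first two properties are then immediate. The singleton $\{a''\}$ of $\mu'$ shares no vertex with $\underline{n}'$, so it is unmerged in the composition and unprimes to the singleton $\{a'\}$ in $\rho * \mu$. For $i \in S$, the hypothesis $a,b \notin S$ gives $i \neq a, b$, so $\mu'$ contains the part $\{i',i''\}$; since $\rho \in K_i$ forces $i'$ to be a singleton in $\rho$, this part is not merged with anything, leaving $\{i''\}$ alone after restriction and unpriming.

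The third property is where the case distinction matters. For $(i,j) \in T$, our standing assumption ensures $i, j \notin S \cup \{a\}$, and $\rho \in L_{i,j}$ puts $i'$ and $j'$ in a common part of $\rho$. If neither $i$ nor $j$ equals $b$, then the parts $\{i',i''\}$ and $\{j',j''\}$ of $\mu'$ are merged via this part of $\rho$, placing $i''$ and $j''$ together. If one of $i, j$ equals $b$, then that doubled vertex lies in the triple $\{a', b', b''\}$ of $\mu'$, and the same merging via $\rho$ identifies it with the other of $\{i', i''\}$ or $\{j', j''\}$; after restriction and unpriming, $i'$ and $j'$ again lie in a common part. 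The main point of care throughout is the distinguished role of $b$ in the triple of $\mu$, but otherwise the argument is a direct reading of the multiplication rule.
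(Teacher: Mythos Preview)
Your proof is correct and follows essentially the same route as the paper's: assume $K_a \cap J \neq 0$, then verify that $\rho\mu$ lies in each $K_i$, each $L_{i,j}$, and in $K_a$ by tracing through the composition rule. The only cosmetic differences are that you invoke the full ``if and only if'' of Lemma~\ref{lem:theFormOfTheIntersection} to set up the standing assumption $i,j \notin S \cup \{a\}$ for all $(i,j) \in T$ upfront (the paper derives only $i,j \neq a$ by contradiction inside the $L_{i,j}$ step), and you split on whether $i$ or $j$ equals $b$, whereas the paper handles this uniformly via the observation that $k$ and $k'$ lie in the same part of $\mu$ for every $k \neq a$ (including $k=b$, via the triple $\{a,b,b'\}$).
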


Of course, $\mu$ depends on $a$ and $b$, but we take the liberty of omitting this from the notation.

To give the idea, here is a picture. We follow the convention of \cite{BHPPartition} and represent partitions as graphs - vertices connected by a sequence of edges lie in the same part of the partition. Set $n = 7$, $S = \{6\}$, $T=\{(1,2),(3,4)\}$, $a=5$. Choose $b=4$, so that $\mu$ is the below partition on the right, and suppose given $\rho$, the (reasonably generic) partition on the left:

\begin{center}
$\rho =$
\quad
\begin{tikzpicture}[x=1.3cm,baseline=1.5cm]

\def\wid{2}
\def\cheatWid{\wid*1.05}
\def\hei{0.5}
\def\nodesize{3}

\node[v, minimum size=\nodesize] (a1) at (1* \wid,0*\hei) {};
\node[v, minimum size=\nodesize] (a2) at (1* \wid,1*\hei) {};
\node[v, minimum size=\nodesize] (a3) at (1* \wid,2*\hei) {};
\node[v, minimum size=\nodesize] (a4) at (1* \wid,3*\hei) {};
\node[v, minimum size=\nodesize] (a5) at (1* \wid,4*\hei) {};
\node[v, minimum size=\nodesize] (a6) at (1* \wid,5*\hei) {};
\node[v, minimum size=\nodesize] (a7) at (1* \wid,6*\hei) {};

\node[v, minimum size=\nodesize] (x1) at (1.25* \wid,1*\hei) {};

\node[v, minimum size=\nodesize] (y1) at (1.75* \wid,0.5*\hei) {};
\node[v, minimum size=\nodesize] (y2) at (1.75* \wid,2.5*\hei) {};

\node[v, minimum size=\nodesize] (b1) at (2* \wid,0*\hei) {};
\node[v, minimum size=\nodesize] (b2) at (2* \wid,1*\hei) {};
\node[v, minimum size=\nodesize] (b3) at (2* \wid,2*\hei) {};
\node[v, minimum size=\nodesize] (b4) at (2* \wid,3*\hei) {};
\node[v, minimum size=\nodesize] (b5) at (2* \wid,4*\hei) {};
\node[v, minimum size=\nodesize] (b6) at (2* \wid,5*\hei) {};
\node[v, minimum size=\nodesize] (b7) at (2* \wid,6*\hei) {};

\draw[e] (a7) to (b5);
\draw[e] (a6) to (b7);

\draw[e] (a4) to (x1);
\draw[e] (a3) to (x1);
\draw[e] (a1) to (x1);

\draw[e] (x1) to (y1);

\draw[e] (y1) to (b1);
\draw[e] (y1) to (b2);
\draw[e] (y2) to (b3);
\draw[e] (y2) to (b4);

\draw (2.4* \wid,5*\hei) node{$S$};
\draw[e, dotted] (b6) to (2.3* \wid,5*\hei);

\draw (2.4* \wid,1.5*\hei) node{$T$};
\draw[e, dotted] (2.1* \wid,2.5*\hei) to (2.3* \wid,1.5*\hei);

\draw (2.4* \wid,1.5*\hei) node{$T$};
\draw[e, dotted] (2.1* \wid,0.5*\hei) to (2.3* \wid,1.5*\hei);

\draw (0.8* \wid,6*\hei) node{$7$};
\draw (0.8* \wid,5*\hei) node{$6$};
\draw (0.8* \wid,4*\hei) node{$5$};
\draw (0.8* \wid,3*\hei) node{$4$};
\draw (0.8* \wid,2*\hei) node{$3$};
\draw (0.8* \wid,1*\hei) node{$2$};
\draw (0.8* \wid,0*\hei) node{$1$};

\end{tikzpicture}
,
\quad
$\mu =$
\quad
\begin{tikzpicture}[x=1.5cm,baseline=1.5cm]

\def\wid{2}
\def\cheatWid{\wid*1.05}
\def\hei{0.5}
\def\nodesize{3}

\node[v, minimum size=\nodesize] (b1) at (2* \wid,0*\hei) {};
\node[v, minimum size=\nodesize] (b2) at (2* \wid,1*\hei) {};
\node[v, minimum size=\nodesize] (b3) at (2* \wid,2*\hei) {};
\node[v, minimum size=\nodesize] (b4) at (2* \wid,3*\hei) {};
\node[v, minimum size=\nodesize] (b5) at (2* \wid,4*\hei) {};
\node[v, minimum size=\nodesize] (b6) at (2* \wid,5*\hei) {};
\node[v, minimum size=\nodesize] (b7) at (2* \wid,6*\hei) {};

\node[v, minimum size=\nodesize] (c1) at (3* \wid,0*\hei) {};
\node[v, minimum size=\nodesize] (c2) at (3* \wid,1*\hei) {};
\node[v, minimum size=\nodesize] (c3) at (3* \wid,2*\hei) {};
\node[v, minimum size=\nodesize] (c4) at (3* \wid,3*\hei) {};
\node[v, minimum size=\nodesize] (c5) at (3* \wid,4*\hei) {};
\node[v, minimum size=\nodesize] (c6) at (3* \wid,5*\hei) {};
\node[v, minimum size=\nodesize] (c7) at (3* \wid,6*\hei) {};

\node[v, minimum size=\nodesize] (z1) at (2.5* \wid,3*\hei) {};

\draw[e] (b1) to (c1);
\draw[e] (b2) to (c2);
\draw[e] (b3) to (c3);
\draw[e] (b6) to (c6);
\draw[e] (b7) to (c7);

\draw[e] (b4) to (z1);
\draw[e] (b5) to (z1);

\draw[e] (z1) to (c4);

\draw (3.4* \wid,4*\hei) node{$a'$};
\draw[e, dotted] (c5) to (3.3* \wid,4*\hei);

\draw (3.4* \wid,3*\hei) node{$b'$};
\draw[e, dotted] (c4) to (3.3* \wid,3*\hei);

\draw (1.8* \wid,6*\hei) node{$7$};
\draw (1.8* \wid,5*\hei) node{$6$};
\draw (1.8* \wid,4*\hei) node{$5$};
\draw (1.8* \wid,3*\hei) node{$4$};
\draw (1.8* \wid,2*\hei) node{$3$};
\draw (1.8* \wid,1*\hei) node{$2$};
\draw (1.8* \wid,0*\hei) node{$1$};
\end{tikzpicture}
\end{center}

The product $\rho \mu$ then looks as follows, where we highlight in red the `mechanisms' by which the features of $\rho$ indexed by $S$ and $T$ are preserved under multiplication by $\mu$:
\begin{center}
$\rho \mu =$
\quad
\begin{tikzpicture}[x=1.3cm,baseline=1.5cm]

\def\wid{2}
\def\cheatWid{\wid*1.05}
\def\hei{0.5}
\def\nodesize{3}

\node[v, minimum size=\nodesize] (a1) at (1* \wid,0*\hei) {};
\node[v, minimum size=\nodesize] (a2) at (1* \wid,1*\hei) {};
\node[v, minimum size=\nodesize] (a3) at (1* \wid,2*\hei) {};
\node[v, minimum size=\nodesize] (a4) at (1* \wid,3*\hei) {};
\node[v, minimum size=\nodesize] (a5) at (1* \wid,4*\hei) {};
\node[v, minimum size=\nodesize] (a6) at (1* \wid,5*\hei) {};
\node[v, minimum size=\nodesize] (a7) at (1* \wid,6*\hei) {};

\node[v, minimum size=\nodesize] (x1) at (1.25* \wid,1*\hei) {};

\node[v, minimum size=\nodesize] (y1) at (1.75* \wid,0.5*\hei) {};
\node[v, minimum size=\nodesize] (y2) at (1.75* \wid,2.5*\hei) {};

\node[v, minimum size=\nodesize] (b1) at (2* \wid,0*\hei) {};
\node[v, minimum size=\nodesize] (b2) at (2* \wid,1*\hei) {};
\node[v, minimum size=\nodesize] (b3) at (2* \wid,2*\hei) {};
\node[v, minimum size=\nodesize] (b4) at (2* \wid,3*\hei) {};
\node[v, minimum size=\nodesize] (b5) at (2* \wid,4*\hei) {};
\node[v, minimum size=\nodesize] (b6) at (2* \wid,5*\hei) {};
\node[v, minimum size=\nodesize] (b7) at (2* \wid,6*\hei) {};

\draw[e] (a7) to (b5);
\draw[e] (a6) to (b7);

\draw[e] (a4) to (x1);
\draw[e] (a3) to (x1);
\draw[e] (a1) to (x1);

\draw[e] (x1) to (y1);

\draw[e, red] (y1) to (b1);
\draw[e, red] (y1) to (b2);
\draw[e, red] (y2) to (b3);
\draw[e, red] (y2) to (b4);

\def\wid{2}
\def\cheatWid{\wid*1.05}
\def\hei{0.5}
\def\nodesize{3}

\node[v, minimum size=\nodesize] (c1) at (3* \wid,0*\hei) {};
\node[v, minimum size=\nodesize] (c2) at (3* \wid,1*\hei) {};
\node[v, minimum size=\nodesize] (c3) at (3* \wid,2*\hei) {};
\node[v, minimum size=\nodesize] (c4) at (3* \wid,3*\hei) {};
\node[v, minimum size=\nodesize] (c5) at (3* \wid,4*\hei) {};
\node[v, minimum size=\nodesize] (c6) at (3* \wid,5*\hei) {};
\node[v, minimum size=\nodesize] (c7) at (3* \wid,6*\hei) {};

\node[v, minimum size=\nodesize] (z1) at (2.5* \wid,3*\hei) {};

\draw[e, red] (b1) to (c1);
\draw[e, red] (b2) to (c2);
\draw[e, red] (b3) to (c3);
\draw[e, red] (b6) to (c6);
\draw[e] (b7) to (c7);

\draw[e, red] (b4) to (z1);
\draw[e] (b5) to (z1);

\draw[e,red] (z1) to (c4);
\end{tikzpicture}
\quad
$=$
\quad
\begin{tikzpicture}[x=1.3cm,baseline=1.5cm]

\def\wid{2}
\def\cheatWid{\wid*1.05}
\def\hei{0.5}
\def\nodesize{3}

\node[v, minimum size=\nodesize] (a1) at (1* \wid,0*\hei) {};
\node[v, minimum size=\nodesize] (a2) at (1* \wid,1*\hei) {};
\node[v, minimum size=\nodesize] (a3) at (1* \wid,2*\hei) {};
\node[v, minimum size=\nodesize] (a4) at (1* \wid,3*\hei) {};
\node[v, minimum size=\nodesize] (a5) at (1* \wid,4*\hei) {};
\node[v, minimum size=\nodesize] (a6) at (1* \wid,5*\hei) {};
\node[v, minimum size=\nodesize] (a7) at (1* \wid,6*\hei) {};

\node[v, minimum size=\nodesize] (x1) at (1.25* \wid,1*\hei) {};

\node[v, minimum size=\nodesize] (y2) at (1.75* \wid,0.5*\hei) {};

\node[v, minimum size=\nodesize] (c1) at (2* \wid,0*\hei) {};
\node[v, minimum size=\nodesize] (c2) at (2* \wid,1*\hei) {};
\node[v, minimum size=\nodesize] (c3) at (2* \wid,2*\hei) {};
\node[v, minimum size=\nodesize] (c4) at (2* \wid,3*\hei) {};
\node[v, minimum size=\nodesize] (c5) at (2* \wid,4*\hei) {};
\node[v, minimum size=\nodesize] (c6) at (2* \wid,5*\hei) {};
\node[v, minimum size=\nodesize] (c7) at (2* \wid,6*\hei) {};

\node[v, minimum size=\nodesize] (z1) at (1.75* \wid,2.5*\hei) {};

\draw[e] (a7) to (z1);
\draw[e] (a6) to (c7);

\draw[e] (a4) to (x1);
\draw[e] (a3) to (x1);
\draw[e] (a1) to (x1);

\draw[e] (x1) to (y1);

\draw[e] (y1) to (c1);
\draw[e] (y1) to (c2);
\draw[e] (z1) to (c3);
\draw[e] (z1) to (c4);

\draw (2.4* \wid,4*\hei) node{$a'$};
\draw[e, dotted] (c5) to (2.3* \wid,4*\hei);

\draw (2.4* \wid,3*\hei) node{$b'$};
\draw[e, dotted] (c4) to (2.3* \wid,3*\hei);

\draw (2.4* \wid,5*\hei) node{$S$};
\draw[e, dotted] (c6) to (2.3* \wid,5*\hei);

\draw (2.4* \wid,1.5*\hei) node{$T$};
\draw[e, dotted] (2.1* \wid,2.5*\hei) to (2.3* \wid,1.5*\hei);

\draw (2.4* \wid,1.5*\hei) node{$T$};
\draw[e, dotted] (2.1* \wid,0.5*\hei) to (2.3* \wid,1.5*\hei);
\end{tikzpicture}
\end{center}

Formally, the proof goes as follows.

\begin{proof} Assume $K_{a} \cap J \neq 0$, and let $\rho \in J$ be a partition. We must show that $\rho \mu \in K_{a} \cap J$.

If $i \in S$ then, since $\rho \in J$, $i'$ is a singleton in $\rho$. By construction, $i$ is not equal to $a$ or $b$, so $i'$ is still a singleton in (the underlying partition of) $\rho \mu$. It follows that $\rho \mu$ is in $K_i$.

If $(i,j) \in T$ then, since $\rho \in J$, $i'$ and $j'$ lie in the same part of $\rho$. If either $i = a$ or $j = a$ then by Lemma \ref{lem:theFormOfTheIntersection} we have $K_a \cap J = 0$, which contradicts our initial assumption, so in fact neither $i$ nor $j$ can be equal to $a$. It follows that $i'$ and $j'$ lie in the same part of $\rho \mu$, hence that $\rho \mu$ is in $L_{i,j}$.

Lastly, $a'$ is a singleton in $\mu$, hence also in $\rho \mu$, so $\rho \mu \in K_a$. In total, we have shown that $\rho \mu \in \bigcap_{i \in S \cup \{a \}} K_i \cap \bigcap_{(i,j) \in T} L_{i,j} = K_{a} \cap J$, as required.
\end{proof}

\begin{lemma} \label{lem:PretractK} Let $S \subset\underline{n}$, and let $T \subset \underline{n}^2_{<}$. Let $$J = \bigcap_{i \in S} K_i \cap \bigcap_{(i,j) \in T} L_{i,j},$$ and let $a \in \underline{n} \setminus S$ be such that $S \cup \{a\}$ is a proper subset of $\underline{n}$. 
If $K_a \cap J$ is nonzero, then right multiplication by the element $\mu$ constructed in Lemma \ref{lem:PmapK} gives a retraction of the inclusion of $K_a \cap J$ into $J.$
\end{lemma}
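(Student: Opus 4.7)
The plan is to show that the map $r \colon J \to K_a \cap J$ defined by $r(x) = x\mu$ is a retraction of the inclusion. Lemma~\ref{lem:PmapK} already supplies $J \cdot \mu \subseteq K_a \cap J$, so $r$ is well-defined; the content is to verify that $r$ restricts to the identity on $K_a \cap J$. Since both sides are $R$-linear and $K_a \cap J$ is spanned by partitions (by Lemma~\ref{lem:theFormOfTheIntersection}), it suffices to show $\rho \mu = \rho$ for every partition $\rho$ lying in $K_a \cap J$.

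Fix such a partition $\rho$. The crucial input is that $\rho \in K_a$, so $\{a'\}$ is a singleton of $\rho$. To compute $\rho\mu$ I would form the ``composed partition'' $\rho * \mu$ as in Definition~\ref{def:partitionAlg}, using the parts of $\mu'$ on $\underline{n}' \cup \underline{n}''$, namely $\{a''\}$, $\{a', b', b''\}$, and $\{i', i''\}$ for each $i \in \underline{n} \setminus \{a,b\}$. In the join of $\rho$ with $\mu'$, the $\mu'$-part $\{a',b',b''\}$ merges the isolated $\rho$-part $\{a'\}$ with the $\rho$-part containing $b'$ and with $b''$; because $\{a'\}$ contributes no $\underline{n}$-vertex, the only new element this adds to $b'$'s block is $b''$. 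Each other $\mu'$-part $\{i',i''\}$ simply attaches $i''$ to the $\rho$-part containing $i'$, and the $\mu'$-part $\{a''\}$ contributes an isolated $a''$.

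Restricting the join to $\underline{n} \cup \underline{n}''$ and then unpriming $''\to{}'$ thus returns exactly the block structure of $\rho$: every $\rho$-part $P$ (which does not contain $a'$ since $a'$ is isolated) becomes the part $P$ of $\rho*\mu$ (with each $i' \in P$ relabelled as $i''$, then unprimed), while the isolated $a''$ unprimes to reproduce the singleton $\{a'\}$. Hence, as partitions, $\rho * \mu = \rho$.

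Finally I would verify that the power of $\delta$ appearing in $\rho\mu$ is zero, i.e.\ that the join has no block contained entirely in $\underline{n}'$. Any such block would have to come from a $\rho$-part $Q \subseteq \underline{n}'$. If $Q$ contains some $i' \neq a', b'$, the join-block also contains $i''$; if $Q$ contains $b'$, the $\mu'$-part $\{a',b',b''\}$ contributes $b''$; and the only remaining possibility, $Q = \{a'\}$, likewise merges via $\{a',b',b''\}$ to pick up $b''$. In every case the join-block meets $\underline{n}''$, so $j = 0$ and $\rho\mu = \delta^0 \cdot \rho = \rho$, as required. The main obstacle is simply the bookkeeping in the join computation; once one sees that the merger of the isolated $\{a'\}$ with the $\mu'$-part $\{a',b',b''\}$ does nothing beyond attaching $b''$ to $b'$'s existing block, everything else falls out mechanically.
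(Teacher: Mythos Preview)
Your proposal is correct and follows essentially the same approach as the paper: take a partition $\rho \in K_a \cap J$, check that no factor of $\delta$ appears (since every element of $\underline{n}'$ lies in a $\mu'$-part meeting $\underline{n}''$), and verify $\rho*\mu=\rho$ using that $\{a'\}$ is a singleton of $\rho$. The only difference is presentational: the paper first describes $\rho\mu$ for an arbitrary partition $\rho$ (removing $a'$ from its part and merging the remainder with the part containing $b'$) and then specializes to $\rho\in K_a$, whereas you compute the join directly under the assumption that $\{a'\}$ is already isolated.
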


\begin{proof} We must take a partition $\rho \in K_a \cap J$ and show that $\rho \mu = \rho$. 

Every unprimed vertex lies in the same part of $\mu$ as some primed vertex, so for any partition $\rho$ the product $\rho \mu$ produces no factors of $\delta$, and is again a partition. We must argue that this partition is $\rho$.

Let $A$ be the part of $\rho$ containing $a'$, and let $B$ be the part of $\rho$ containing $b'$. It then follows from the definition of $\mu$ that $\rho \mu$ has parts:
\begin{itemize}
    \item the singleton $\{a'\}$
    \item $(A \setminus \{a'\}) \cup B$
    \item the parts of $\rho$ other than $A$ and $B$.
\end{itemize}

In other words, $\mu$ operates on $\rho$ by removing $a'$ from its part and merging the remainder with the part containing $b'$.

It follows that if $a'$ is already a singleton in $\rho$, then $\rho \mu = \rho$. \end{proof}

\begin{lemma} \label{lem:PMapL} Let $S \subset\underline{n}$, and let $T \subset \underline{n}^2_{<}$. Let $$J = \bigcap_{i \in S} K_i \cap \bigcap_{(i,j) \in T} L_{i,j},$$ and let $(a,b) \in \underline{n}^2_< \setminus T$. Let $\nu$ be the partition of $\underline{n} \cup \underline{n}'$ whose parts are
\begin{itemize}
    \item the quadruple $\{a, b, a', b'\}$, and
    \item the pair $\{i,i'\}$ for each $i \in \underline{n} \setminus \{a, b\}$.
\end{itemize}
Then either $L_{a,b} \cap J = 0$ or $J \cdot \nu \subset L_{a,b} \cap J$. \end{lemma}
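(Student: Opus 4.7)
The plan is to mimic the proof of Lemma \ref{lem:PmapK}. First, I observe that if $L_{a,b} \cap J \neq 0$ then by Lemma \ref{lem:theFormOfTheIntersection} applied to the enlarged index data $(S, T \cup \{(a,b)\})$, no element of $T \cup \{(a,b)\}$ has an endpoint in $S$; in particular, neither $a$ nor $b$ lies in $S$. Fixing an arbitrary partition $\rho \in J$, I then verify directly that $\rho \nu$ lies in $L_{a,b}$, in $K_i$ for each $i \in S$, and in $L_{i,j}$ for each $(i,j) \in T$.

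To this end I first unpack the effect of right-multiplication by $\nu$. Because every unprimed vertex of $\nu$ shares a part with some primed vertex, the composed diagram contributes no factors of $\delta$, so $\rho \nu$ equals its underlying partition $\rho \ast \nu$. Combinatorially, $\rho \ast \nu$ is obtained from $\rho$ by merging the parts containing $a'$ and $b'$ into a single part to which the output $a'$ and $b'$ are also attached, while for $i \neq a, b$ the pair $\{i, i'\}$ of $\nu$ passes the $\rho$-part of $i'$ through unchanged. Given this description, the $L_{a,b}$ condition is tautological (the quadruple directly joins the output $a'$ and $b'$), the $K_i$ condition for $i \in S$ holds because $i \neq a, b$ so the singleton $\{i'\}$ in $\rho$ propagates to a singleton $\{i'\}$ in $\rho \ast \nu$, and the $L_{i,j}$ condition for $(i,j) \in T$ holds because the joining of $i'$ and $j'$ in $\rho$ either survives transport by the identity-like pairs of $\nu$, or else is subsumed into the large merged part when one of $i, j$ happens to coincide with $a$ or $b$.

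The only slightly delicate point is the $L_{i,j}$ verification when one of $i, j$ equals $a$ or $b$; here I would trace through Definition \ref{def:partitionAlg} to confirm that merging with the quadruple genuinely keeps $i'$ and $j'$ in a common part rather than separating them. The potentially problematic case $\{i, j\} = \{a, b\}$ is excluded by the standing assumption $(a,b) \notin T$. Everything else runs exactly parallel to Lemmas \ref{lem:PmapK} and \ref{lem:PretractK}, and I expect no further obstacles.
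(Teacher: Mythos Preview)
Your proof is correct and follows essentially the same approach as the paper's. The paper's $L_{i,j}$ verification is marginally slicker: rather than splitting into cases on whether $i$ or $j$ coincides with $a$ or $b$, it observes uniformly that every $k \in \underline{n}$ lies in the same part of $\nu$ as $k'$, so any $i', j'$ sharing a part of $\rho$ automatically share a part of $\rho\nu$ with no case analysis needed (and in particular the case $\{i,j\}=\{a,b\}$ requires no special handling).
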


\begin{proof} Assume $L_{a,b} \cap J \neq 0$, and let $\rho \in J$ be a partition. We must show that $\rho \nu \in L_{a,b} \cap J$.

If $i \in S$ then $i'$ is a singleton in $\rho$. By Lemma \ref{lem:theFormOfTheIntersection}, the assumption $L_{a,b} \cap J \neq 0$ gives that $a$ and $b$ are not in $S$, so $i$ is not equal to $a$ or $b$, so $\{i,i'\}$ is a part of $\nu$, and $i'$ is again a singleton in $\rho \nu$. This gives that $\rho \nu \in K_i$, as required.

If $(i,j) \in T$ then $i'$ and $j'$ lie in the same part of $\rho$. For each $k \in \underline{n}$, $k$ and $k'$ lie in the same part of $\nu$, so $i'$ and $j'$ lie in the same part of $\rho \nu$, so $\rho \nu \in L_{i,j}$, as required.

Lastly, $a',b'$ lie in the same part of $\nu$, hence also in the same part of $\rho \nu$, so $\rho \nu \in L_{a,b}$. In total, we have shown that $\rho \nu \in L_{a,b} \cap J$, as required.
\end{proof}

\begin{lemma} \label{lem:PretractL} Let $S \subset\underline{n}$, and let $T \subset \underline{n}^2_{<}$. Let $$J = \bigcap_{i \in S} K_i \cap \bigcap_{(i,j) \in T} L_{i,j},$$ and let $(a,b) \in \underline{n}^2_< \setminus T$. If $L_{a,b} \cap J$ is nonzero, then right multiplication by the element $\nu$ constructed in Lemma \ref{lem:PMapL} gives a retraction of the inclusion of $L_{a,b} \cap J$ into $J$. \end{lemma}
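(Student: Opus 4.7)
The plan is to mirror the argument of Lemma \ref{lem:PretractK} almost verbatim. Fix a partition $\rho \in L_{a,b} \cap J$; I aim to show directly that $\rho \nu = \rho$ in $\Partition_n(\delta)$. The factor of $\delta$ will be trivially absent, because every unprimed vertex in $\nu$ shares a part with a primed vertex, so every part of the composed partition $\rho \circ \nu'$ meets $\underline{n} \cup \underline{n}''$. Hence $\rho \nu$ is again a (scalar-free) partition, and the remaining task is to identify its underlying partition with $\rho$.

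The key observation is that $\nu$ is nothing more than the identity partition with the two strands $\{a, a'\}$ and $\{b, b'\}$ merged into the single quadruple $\{a, b, a', b'\}$, and that since $\rho \in L_{a,b}$, the vertices $a'$ and $b'$ already lie in a common part $A$ of $\rho$. Thus the "extra" merging in $\nu$ is already present in $\rho$, while the remaining parts $\{i, i'\}$ of $\nu$ (for $i \neq a, b$) are identity strands that leave $\rho$ alone under right multiplication.

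To turn this into a proof I would unpack $\rho \circ \nu'$ part by part. The part $\{a', b', a'', b''\}$ of $\nu'$ attaches $\{a'', b''\}$ to $A$ (since $a', b' \in A$), and each remaining part $\{i', i''\}$ of $\nu'$ attaches $i''$ to whichever part of $\rho$ contains $i'$. Restricting to $\underline{n} \cup \underline{n}''$ and unpriming then returns each part of $\rho$ intact: for $A$, the contribution $\{a'', b''\}$ unprimes back to $\{a', b'\} \subset A$; for any other part $P$, the hypothesis $\rho \in L_{a,b}$ guarantees $a', b' \notin P$, so the identity strands supply exactly the missing primed vertices of $P$.

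I do not expect any serious obstacle. The argument is slightly simpler than Lemma \ref{lem:PretractK}, because $\nu$ has no singleton part and there is no "escaping" vertex to track. The only care required is to invoke $\rho \in L_{a,b}$ explicitly in order to conclude that no part of $\rho$ other than $A$ meets $\{a', b'\}$, which is what makes the merging in $\nu'$ a no-op rather than a genuine identification of previously distinct parts of $\rho$.
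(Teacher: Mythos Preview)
Your proposal is correct and follows essentially the same approach as the paper: both fix $\rho \in L_{a,b} \cap J$, observe that no factor of $\delta$ arises because every unprimed vertex of $\nu$ shares a part with a primed one, and then note that right multiplication by $\nu$ merges the parts of $\rho$ containing $a'$ and $b'$, which is a no-op since $\rho \in L_{a,b}$. Your version is slightly more explicit in tracking $\rho \circ \nu'$ part by part, but the content is identical.
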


\begin{proof} Lemma \ref{lem:PMapL} shows that right multiplication by $\nu$ has the correct codomain, so it suffices to take a partition $\rho \in L_{a,b}$ and show that $\rho \nu = \rho$.

Again, every unprimed vertex lies in the same part of $\nu$ as some primed vertex, so the product $\rho \nu$ produces no factors of $\delta$, and is again a partition. We must argue that this partition is $\rho$.

For any $\rho$, the product $\rho \nu$ is the partition obtained from $\rho$ by merging the part containing $a'$ with the part containing $b'$. In particular, if $a'$ and $b'$ are already in the same part of $\rho$ (i.e. if $\rho \in L_{a,b}$) then $\rho \nu = \rho$. \end{proof}

\begin{lemma} \label{lem:IdempotentUnlessSisn} If $S \neq \underline{n}$, then the left ideal $$J=\bigcap_{i \in S} K_i \cap \bigcap_{(i,j) \in T} L_{i,j}$$ is (zero or) principal and generated by an idempotent. \end{lemma}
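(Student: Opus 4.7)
The plan is to induct on $|S| + |T|$, peeling off one constraint at a time and composing retractions, with Lemma~\ref{lem:retractIsIdemp} at the end converting the resulting retraction into an idempotent generator. The base case $S = T = \emptyset$ is immediate: $J = \Partition_n(\delta)$ is generated by the idempotent $1$.

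For the inductive step, the goal is to exhibit $J$ as a retract of $\Partition_n(\delta)$ via right multiplication by an element of $J$. If $S$ is nonempty, I would pick any $a \in S$ and consider
$$J' := \bigcap_{i \in S \setminus \{a\}} K_i \cap \bigcap_{(i,j) \in T} L_{i,j},$$
so that $J = K_a \cap J'$. Since $|S \setminus \{a\}| + |T| < |S| + |T|$ and $S \setminus \{a\} \subsetneq \underline{n}$, the inductive hypothesis gives that $J'$ is either zero (in which case $J = 0$ and we are done) or principal on some idempotent $e' \in J'$, with right multiplication by $e'$ furnishing a retraction $\Partition_n(\delta) \to J'$ of the inclusion. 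Invoking Lemma~\ref{lem:PretractK}, either $K_a \cap J' = 0$ (so $J = 0$) or right multiplication by the element $\mu$ from Lemma~\ref{lem:PmapK} is a retraction $J' \to J$ of the inclusion. Composing these two retractions, right multiplication by $e'\mu \in J$ is a retraction $\Partition_n(\delta) \to J$ of the inclusion, so Lemma~\ref{lem:retractIsIdemp} identifies $J$ as the principal left ideal generated by the idempotent $e'\mu$. If instead $S = \emptyset$ but $T$ is nonempty, I would pick $(a,b) \in T$ and run the same argument with Lemmas~\ref{lem:PMapL} and~\ref{lem:PretractL} in place of Lemmas~\ref{lem:PmapK} and~\ref{lem:PretractK}.

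The only step that needs genuine bookkeeping, and the main (minor) obstacle, is verifying the hypothesis of Lemma~\ref{lem:PretractK} as we unpeel $K$-constraints. That lemma demands $(S \setminus \{a\}) \cup \{a\} = S$ to be a proper subset of $\underline{n}$, which is precisely the standing assumption $S \neq \underline{n}$; this is the sole place where the hypothesis of the lemma under proof is used. The analogous condition for Lemma~\ref{lem:PretractL} is $(a,b) \notin T \setminus \{(a,b)\}$, which is automatic. All of the substantive work has therefore been done in Lemmas~\ref{lem:PmapK}--\ref{lem:PretractL}, and the present lemma is just a clean induction assembling those retractions.
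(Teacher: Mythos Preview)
Your proof is correct and is essentially the same as the paper's, just spelled out in more detail: both arguments peel off the constraints one at a time using Lemmas~\ref{lem:PretractK} and~\ref{lem:PretractL}, compose the resulting right-multiplication retractions, and finish with Lemma~\ref{lem:retractIsIdemp}. Your observation that the hypothesis $S \neq \underline{n}$ enters exactly as the properness condition in Lemma~\ref{lem:PretractK} is precisely right.
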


\begin{proof} If $J$ is zero, the statement is trivial. If $J$ is non-zero, then $S$ and $\bigcup_{(i,j) \in T} \{i,j\}$ are disjoint by Lemma \ref{lem:theFormOfTheIntersection}. We can inductively apply Lemma \ref{lem:PretractK} to get a retraction $\Partition_n(\delta) \to \bigcap_{i \in S} K_i$, then Lemma \ref{lem:PretractL} to get a retraction $\bigcap_{i \in S} K_i \to J$. The assumption $S \neq \underline{n}$ guarantees that we can apply Lemma \ref{lem:PretractK} without violating the proper subset condition. The composite of these two retractions gives a retraction $\Partition_n(\delta) \to J$.

By construction, this retraction is a composite of maps given by right multiplication by certain elements of $\Partition_n(\delta)$, so is itself given by right multiplication by the product of those elements. This product element $\rho$ must lie in $J$ because $\rho = 1 \cdot \rho$ lies in $\textrm{Im}(\cdot \rho) \subset J$. The result then follows from Lemma \ref{lem:retractIsIdemp}.
\end{proof}

We are now ready to prove the main result on $\Partition_n(\delta)$. Recall from the introduction that we have $$\faktor{\Partition_n(\delta)}{I_{\leq n-1}} \cong R \Sigma_n,$$ the group algebra of the symmetric group, and recall from Remark \ref{rmk: Partition trivial module} that this isomorphism identifies the trivial modules (both denoted $\t$) of these two algebras, in particular that $I_{\leq n-1}$ acts trivially on the trivial module $\t$ of $\Partition_n(\delta)$.

\begin{proof}[Proof of Theorem \ref{thm:partition}] By Lemma \ref{lem:cover}, these ideals do indeed form an $R$-free cover of $I_{\leq n-1}$. By Lemma \ref{lem:IdempotentUnlessSisn}, an intersection of at most $n-1$ ideals from among the $K_i$ and $L_{i,j}$ is either zero or principal idempotent, so this is indeed a principal idempotent cover of height $n-1$. Certainly $I_{\leq n-1}$ acts trivially on $\t$, so the result follows by Theorem \ref{thm:main}. \end{proof}

\section{Jones annular algebras}

Again, we will use the `priming' convention of Definition \ref{def:Priming} for vertex labelling.

\begin{definition} \label{def:Jones} Consider a cylinder $C = S^1 \times [0,1]$. Embed the unprimed vertices $\underline{n}$, equally spaced, around $S^1 \times \{0\} \subset C$, and embed the primed vertices $\underline{n}'$ around $S^1 \times \{1\} \subset C$. Precisely, regarding $S^1$ as the complex unit circle, we embed $j$ at $(e^{i\frac{2 \pi j}{n}},0)$, and $j'$ at $(e^{i\frac{2 \pi j}{n}},1)$. This naturally identifies the unprimed vertices with a copy of the cyclic group $C_n$, and likewise for the primed vertices.

Let $R$ be a ring (as always, commutative and unital) and let $\delta \in R$. Recall (from e.g. \cite{BHP}) that the Brauer algebra $\Brauer_n(\delta)$ is the subalgebra of the partition algebra $\Partition_n(\delta)$ which has a basis consisting of partitions of $\underline{n} \cup \underline{n}'$ such that each part has cardinality 2. 

Let $\rho$ be such a partition. Say that a \emph{graphical representative of $\rho$ on the annulus} is a choice, for each part of $\rho$, of an embedded curve in $C$ connecting the two vertices of the part. Say that $\rho$ \emph{admits an annular representative} or that $\rho$ \emph{is an annular diagram} if there exists a graphical representative of $\rho$ on the annulus for which no two of the embedded curves intersect. Recall that two such representatives need not be isotopic, but their isotopy classes will differ by a Dehn twist.

The Jones annular algebra $\Jones_n(\delta)$ is then the subalgebra of $\Brauer_n(\delta)$ spanned by partitions which admit annular representatives. \end{definition}

For the sake of fluidity we will feel at liberty to think of the vertices as being labelled either by the cyclic group $C_n$ or by $\underline{n}$.

By the \emph{cyclic interval} $[a,b]$ in the cyclic group $C_n$ we mean the set $\{a,a+1,a+2, \dots, b \}$. Open cyclic intervals are defined similarly. Graham and Lehrer \cite[Proposition 6.14]{GrahamLehrer} give a useful description of the canonical basis of $\Jones_n(\delta)$, which we will now describe.

\begin{definition} \label{def:AnnularLinkState} Let $p$ be a partition of $C_n \cong \underline{n}$ into parts of cardinality 1 and 2. Call parts of cardinality 1 the \emph{defect parts}. Then $p$ is called an \emph{annular link state} if for all non-defect parts $\{i,j\}$ of $p$ we have:
    \begin{itemize}
        \item No part of $p$ having cardinality 2 consists of one element from the cyclic interval $(i,j)$ and one from $(j,i)$. In other words, $(i,j)$ and $(j,i)$ are unions of parts of $p$.
        \item Either all defect parts of $p$ (equivalently, all singletons) are contained in $(i,j)$, or all defect parts of $p$ are contained in $(j,i)$.
    \end{itemize}
The $t$ defect vertices may be ordered via the correspondence $C_n \cong \underline{n}$, and we may speak of the $i$-th defect vertex for $i \in \underline{t}$. Write $M(t)$ for the set of annular link states with $t$ defects.
\end{definition}

Note that the first condition says precisely that $p$ is \emph{non-crossing} as a partition \cite{Kreweras}. The next proposition is essentially \cite[Proposition 6.14]{GrahamLehrer}.

\begin{proposition} \label{prop:JCellular} For $0 \leq t \leq n$, $\sigma \in C_t$, and annular link states $p$ and $q$ having $t$ defects, there is a unique annular diagram $C_{p,q}^{\sigma}$ on $\underline{n} \cup \underline{n}' = C_n \cup C_n'$ such that:
\begin{itemize}
    \item the restriction of $C_{p,q}^{\sigma}$ to $\underline{n} \cong C_n$ is $p$,
    \item the restriction of $C_{p,q}^{\sigma}$ to $\underline{n}' \cong C_n'$ is identified with $q$ under the priming-unpriming correspondence, and
    \item the $i$-th defect vertex of $q$ and the $\sigma(i)$-th vertex of $p$ are connected by an edge in $C_{p,q}^{\sigma}$.
\end{itemize}
Furthermore, the resulting map $$\coprod_{0 \leq t \leq n} M(t) \times C_t \times M(t) \to \Jones_n(\delta)$$ $$(p,\sigma,q) \to C_{p,q}^{\sigma}$$ is an injection onto the $R$-basis of annular diagrams. \qed
\end{proposition}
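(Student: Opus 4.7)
The plan is to verify existence, uniqueness, and bijectivity, handling the combinatorial definition of $C_{p,q}^\sigma$ first and then the topological content (annular realisability). Given $(p,q,\sigma)$, the candidate $C_{p,q}^\sigma$ is uniquely determined as a partition of $\underline{n}\cup\underline{n}'$: take the cardinality-$2$ parts of $p$ as pairs in $\underline{n}$, the cardinality-$2$ parts of $q$ as pairs in $\underline{n}'$ under priming, and for each $i \in \underline{t}$ pair the $i$-th defect of $q$ with the $\sigma(i)$-th defect of $p$. Uniqueness as a partition is immediate, and the map is manifestly injective since $p$, $q$, and $\sigma$ can all be recovered from the partition by restriction and defect-tracking. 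The content of the proposition is then (a) that this partition admits a non-crossing annular representative, so lies in $\Jones_n(\delta)$, and (b) that every annular diagram arises in this way.

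For (a), I would decompose the cylinder $C = S^1 \times [0,1]$ into a bottom collar $S^1 \times [0,\tfrac{1}{3}]$, a middle annular region, and a top collar. The first link state axiom forces the non-defect parts of $p$ to nest correctly under the cyclic order on $\underline{n}$, so they may be drawn as disjoint arcs in the bottom collar; the second axiom says that the $t$ defects of $p$ all lie in a single cyclic interval not separated by these arcs, so after drawing the pair-arcs the defects emerge in cyclic order at the inner boundary $S^1 \times \{\tfrac{1}{3}\}$. The same applies symmetrically to $q$ in the top collar. It remains to connect $t$ emergent points at level $\tfrac{1}{3}$ to $t$ emergent points at level $\tfrac{2}{3}$ by disjoint arcs in the middle annulus according to $\sigma$, which is always possible: since the middle region is an annulus, any permutation of cyclically ordered boundary points can be realised by strands wrapping the core an appropriate number of times. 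Different winding choices yield pictorial representatives differing by Dehn twists, matching the identification built into the definition of $\Jones_n(\delta)$.

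For (b), given an annular diagram $D$ with a chosen non-crossing cylindrical representative, let $p$ and $q$ be the restrictions of the underlying partition to $\underline{n}$ and $\underline{n}'$, and let $\sigma$ be the permutation induced on the cyclically ordered defects by the through-strands of $D$. Then $D = C_{p,q}^\sigma$ tautologically, so the content is to verify that $p$ and $q$ are genuine link states. This is the topological crux: one must argue that a non-crossing arc realising a pair $\{i,j\}$ of $p$, together with the appropriate arc of $S^1 \times \{0\}$, separates the cylinder into regions in a manner that precludes pair-straddling (first axiom) and forces all defects to emerge on a single side (second axiom). The key is to choose for each pair a canonical representative arc drawn inside the bottom collar and then to examine how other arcs and through-strands must behave relative to it.

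I expect the main obstacle to be the topological care required on the cylinder, which, unlike the disk, is not simply connected: an arc between two points of the same boundary circle can wrap around the core, and \emph{sides} of such an arc are not globally well-defined. The equivalence relation on annular diagrams (identification modulo Dehn twists) is precisely what tames this, but it must be invoked at the right moment --- namely when choosing canonical collar representatives. Once this is done, the remaining argument runs along essentially planar lines and the link state axioms fall out cleanly.
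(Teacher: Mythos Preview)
The paper does not prove this proposition: it is stated with a \qed\ and attributed to Graham--Lehrer \cite[Proposition~6.14]{GrahamLehrer}. Your proposal therefore goes beyond what the paper does, supplying a direct argument where the paper simply cites the literature. The three-region decomposition of the cylinder and the collar-drawing of link states is a clean way to organise existence, and the overall shape of your surjectivity argument is right.

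That said, there is one genuine gap, and it is the same point that makes your phrasing in part~(a) inaccurate. In (a) you write that ``any permutation of cyclically ordered boundary points can be realised by strands wrapping the core''; this is false for $t\geq 3$ --- only \emph{cyclic} permutations can be realised by non-crossing through-strands in an annulus. In context this is harmless, since $\sigma\in C_t$ is given. But the same fact is precisely what is missing from part~(b): when you extract $\sigma$ from an annular diagram $D$ as ``the permutation induced on the cyclically ordered defects by the through-strands,'' you must verify that this permutation lies in $C_t$ rather than merely in $\Sigma_t$, and you do not address this. The argument is of the same topological flavour as your verification of the link-state axioms (cut along one through-strand to reduce to a disk, where non-crossing forces order preservation), but it is a separate check and is the reason $C_t$, rather than $\Sigma_t$, appears in the statement. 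Once you add this, your outline is complete.
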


The picture of the correspondence is as follows, with $n=11$ and $t=3$. If
\begin{center}
$(p, \sigma, q)=($
\begin{tikzpicture}[x=1.5cm,y=-.5cm,baseline=-1.5cm]
\def\wid{1}
\def\cheatWid{\wid*1.05}
\def\hei{0.5}
\def\nodesize{3}
\def\newAng{120}

\draw[e, dashed] (\wid,-0.5*\hei) rectangle (2*\wid,-0.5*\hei);
\draw[e, dashed] (\wid,10.5*\hei) rectangle (2*\wid,10.5*\hei);

\node[v, minimum size=\nodesize] (b1) at (1* \wid,0*\hei) {};
\node[v, minimum size=\nodesize] (b2) at (1* \wid,1*\hei) {};
\node[v, minimum size=\nodesize] (b3) at (1* \wid,2*\hei) {};
\node[v, minimum size=\nodesize] (b4) at (1* \wid,3*\hei) {};
\node[v, minimum size=\nodesize] (b5) at (1* \wid,4*\hei) {};
\node[v, minimum size=\nodesize] (b6) at (1* \wid,5*\hei) {};
\node[v, minimum size=\nodesize] (b7) at (1* \wid,6*\hei) {};
\node[v, minimum size=\nodesize] (b8) at (1* \wid,7*\hei) {};
\node[v, minimum size=\nodesize] (b9) at (1* \wid,8*\hei) {};
\node[v, minimum size=\nodesize] (b10) at (1* \wid,9*\hei) {};
\node[v, minimum size=\nodesize] (b11) at (1* \wid,10*\hei) {};

\node[v, minimum size=\nodesize] (x1) at (2* \wid,3*\hei) {};
\node[v, minimum size=\nodesize] (x2) at (2* \wid,5*\hei) {};
\node[v, minimum size=\nodesize] (x3) at (2* \wid,7*\hei) {};

\draw[e] (b1) to[out = 0, in = 270] (1.1 * \wid,-0.5*\hei);
\draw[e] (b2) to[out = 0, in = 270] (1.2 * \wid,-0.5*\hei);
\draw[e] (b11) to[out = 0, in = 90] (1.1 * \wid,10.5*\hei);
\draw[e] (b10) to[out = 0, in = 90] (1.2 * \wid,10.5*\hei);

\draw[e] (b4) to[out=0, in=0] (b5);
\draw[e] (b6) to[out=0, in=0] (b7);

\draw[e] (b3) to[out=0, in=180] (x1);
\draw[e] (b8) to[out=0, in=180] (x2);
\draw[e] (b9) to[out=0, in=180] (x3);

\end{tikzpicture}
$,$
\quad
\begin{tikzpicture}[x=1.5cm,y=-.5cm,baseline=-1.5cm]
\def\wid{1}
\def\cheatWid{\wid*1.05}
\def\hei{0.5}
\def\nodesize{3}
\def\newAng{120}

\draw[e, dashed] (\wid,2.5*\hei) rectangle (2*\wid,2.5*\hei);
\draw[e, dashed] (\wid,7.5*\hei) rectangle (2*\wid,7.5*\hei);

\node[v, minimum size=\nodesize] (x1) at (1* \wid,3*\hei) {};
\node[v, minimum size=\nodesize] (x2) at (1* \wid,5*\hei) {};
\node[v, minimum size=\nodesize] (x3) at (1* \wid,7*\hei) {};

\node[v, minimum size=\nodesize] (y1) at (2* \wid,3*\hei) {};
\node[v, minimum size=\nodesize] (y2) at (2* \wid,5*\hei) {};
\node[v, minimum size=\nodesize] (y3) at (2* \wid,7*\hei) {};

\draw[e] (x1) to[out=0, in=180] (y3);
\draw[e] (x2) to[out=0, in=\newAng] (1.55 * \wid,7.5*\hei);
\draw[e] (x3) to[out=0, in=\newAng] (1.45 * \wid,7.5*\hei);

\draw[e] (1.55 * \wid,2.5*\hei) to[out=180+\newAng, in=180] (y1);
\draw[e] (1.45 * \wid,2.5*\hei) to[out=180+\newAng, in=180] (y2);
\end{tikzpicture}
$,$
\quad
\begin{tikzpicture}[x=1.5cm,y=-.5cm,baseline=-1.5cm]
\def\wid{1}
\def\cheatWid{\wid*1.05}
\def\hei{0.5}
\def\nodesize{3}
\def\newAng{120}

\draw[e, dashed] (\wid,-0.5*\hei) rectangle (2*\wid,-0.5*\hei);
\draw[e, dashed] (\wid,10.5*\hei) rectangle (2*\wid,10.5*\hei);

\node[v, minimum size=\nodesize] (y1) at (1* \wid,3*\hei) {};
\node[v, minimum size=\nodesize] (y2) at (1* \wid,5*\hei) {};
\node[v, minimum size=\nodesize] (y3) at (1* \wid,7*\hei) {};

\node[v, minimum size=\nodesize] (c1) at (2* \wid,0*\hei) {};
\node[v, minimum size=\nodesize] (c2) at (2* \wid,1*\hei) {};
\node[v, minimum size=\nodesize] (c3) at (2* \wid,2*\hei) {};
\node[v, minimum size=\nodesize] (c4) at (2* \wid,3*\hei) {};
\node[v, minimum size=\nodesize] (c5) at (2* \wid,4*\hei) {};
\node[v, minimum size=\nodesize] (c6) at (2* \wid,5*\hei) {};
\node[v, minimum size=\nodesize] (c7) at (2* \wid,6*\hei) {};
\node[v, minimum size=\nodesize] (c8) at (2* \wid,7*\hei) {};
\node[v, minimum size=\nodesize] (c9) at (2* \wid,8*\hei) {};
\node[v, minimum size=\nodesize] (c10) at (2* \wid,9*\hei) {};
\node[v, minimum size=\nodesize] (c11) at (2* \wid,10*\hei) {};

\draw[e] (y3) to[out=0, in=180] (c7);
\draw[e] (y1) to[out=0, in=180] (c1);
\draw[e] (y2) to[out=0, in=180] (c4);

\draw[e] (c5) to[out=180, in=180] (c6);
\draw[e] (c2) to[out=180, in=180] (c3);
\draw[e] (c8) to[out=180, in=180] (c11);
\draw[e] (c9) to[out=180, in=180] (c10);

\end{tikzpicture}
$) \in M(3) \times C_3 \times M_3$,
\end{center}
(here $\sigma$ is the `rotation' $i \mapsto i+2$ and we have drawn `hanging edges' at defects of $p$ and $q$), then 
\begin{center}
$C^{\sigma}_{p,q}=$
\begin{tikzpicture}[x=1.5cm,y=-.5cm,baseline=-1.5cm]
\def\wid{2}
\def\cheatWid{\wid*1.05}
\def\hei{0.5}
\def\nodesize{3}
\def\newAng{120}

\draw[e, dashed] (\wid,-0.5*\hei) rectangle (2*\wid,-0.5*\hei);
\draw[e, dashed] (\wid,10.5*\hei) rectangle (2*\wid,10.5*\hei);

\node[v, minimum size=\nodesize] (b1) at (1* \wid,0*\hei) {};
\node[v, minimum size=\nodesize] (b2) at (1* \wid,1*\hei) {};
\node[v, minimum size=\nodesize] (b3) at (1* \wid,2*\hei) {};
\node[v, minimum size=\nodesize] (b4) at (1* \wid,3*\hei) {};
\node[v, minimum size=\nodesize] (b5) at (1* \wid,4*\hei) {};
\node[v, minimum size=\nodesize] (b6) at (1* \wid,5*\hei) {};
\node[v, minimum size=\nodesize] (b7) at (1* \wid,6*\hei) {};
\node[v, minimum size=\nodesize] (b8) at (1* \wid,7*\hei) {};
\node[v, minimum size=\nodesize] (b9) at (1* \wid,8*\hei) {};
\node[v, minimum size=\nodesize] (b10) at (1* \wid,9*\hei) {};
\node[v, minimum size=\nodesize] (b11) at (1* \wid,10*\hei) {};

\node[v, minimum size=\nodesize] (c1) at (2* \wid,0*\hei) {};
\node[v, minimum size=\nodesize] (c2) at (2* \wid,1*\hei) {};
\node[v, minimum size=\nodesize] (c3) at (2* \wid,2*\hei) {};
\node[v, minimum size=\nodesize] (c4) at (2* \wid,3*\hei) {};
\node[v, minimum size=\nodesize] (c5) at (2* \wid,4*\hei) {};
\node[v, minimum size=\nodesize] (c6) at (2* \wid,5*\hei) {};
\node[v, minimum size=\nodesize] (c7) at (2* \wid,6*\hei) {};
\node[v, minimum size=\nodesize] (c8) at (2* \wid,7*\hei) {};
\node[v, minimum size=\nodesize] (c9) at (2* \wid,8*\hei) {};
\node[v, minimum size=\nodesize] (c10) at (2* \wid,9*\hei) {};
\node[v, minimum size=\nodesize] (c11) at (2* \wid,10*\hei) {};

\draw[e] (b1) to[out = 0, in = 270] (1.067 * \wid,-0.5*\hei);
\draw[e] (b2) to[out = 0, in = 270] (1.13 * \wid,-0.5*\hei);
\draw[e] (b11) to[out = 0, in = 90] (1.067 * \wid,10.5*\hei);
\draw[e] (b10) to[out = 0, in = 90] (1.13 * \wid,10.5*\hei);

\draw[e] (b4) to[out=0, in=0] (b5);
\draw[e] (b6) to[out=0, in=0] (b7);

\draw[e] (b3) to[out=0, in=180] (c7);
\draw[e] (b8) to[out=0, in=\newAng] (1.55 * \wid,10.5*\hei);
\draw[e] (b9) to[out=0, in=\newAng] (1.45 * \wid,10.5*\hei);

\draw[e] (1.55 * \wid,-0.5*\hei) to[out=180+\newAng, in=180] (c1);
\draw[e] (1.45 * \wid,-0.5*\hei) to[out=180+\newAng, in=180] (c4);

\draw[e] (c5) to[out=180, in=180] (c6);
\draw[e] (c2) to[out=180, in=180] (c3);
\draw[e] (c8) to[out=180, in=180] (c11);
\draw[e] (c9) to[out=180, in=180] (c10);

\end{tikzpicture}
$\in \mathrm{J}_{11}(\delta)$.
\end{center}

Henceforth, we will typically write $C_n \cup C_n'$ for the set of vertices, remembering the ordering given by $C_n \cong \underline{n}$ only when necessary.

Recall that the twosided ideal $I_{\leq n-1}$ is the $R$-span of the diagrams with fewer than the maximal number $n$ of left-to-right connections.

\begin{remark} \label{rmk: Jones trivial module} The Jones algebra $\Jones_n(\delta)$ comes with a natural trivial module, or equivalently an augmentation, which we as usual denote $\t$. This module is a copy of $R$, where annular diagrams with $n$ left-to-right connections act by 1, and other annular diagrams act by 0. With this convention, the retraction $$R C_n \to \Jones_n(\delta) \to R C_n$$ given in the introduction is a retraction of augmented algebras. In particular, $I_{\leq n-1}$ acts trivially on $\t$ and we have an isomorphism $\faktor{\Partition_n(\delta)}{I_{\leq n-1}} \cong R C_n$ of augmented algebras. 
\end{remark}

\begin{definition} For $i \in C_n$, let $J_i$ be the left ideal of $\Jones_n(\delta)$ spanned by diagrams where the vertices $i'$ and $(i+1)'$ on the right are connected by an edge. \end{definition}

\begin{lemma} \label{lem:JCover} The ideals $J_i$ cover $I_{\leq n-1}$. \end{lemma}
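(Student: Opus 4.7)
The plan is to verify two containments: first that each $J_i$ lies in $I_{\leq n-1}$, and second that every basis diagram of $I_{\leq n-1}$ lies in some $J_i$. For the first direction, if a basis diagram $C_{p,q}^{\sigma}$ has $i'$ and $(i+1)'$ joined by an edge, then $\{i,i+1\}$ is a $2$-element part of the link state $q$, so $q$ has at most $n-2$ defects. Since the number of left-to-right connections equals the number of defects of $q$, the diagram lies in $I_{\leq n-1}$.

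For the converse, let $C_{p,q}^{\sigma}$ be a basis diagram with $t<n$ left-to-right connections, so that the link state $q$ has $t<n$ defects and hence at least one $2$-element part. I need to produce a $2$-element part of $q$ whose elements are cyclically adjacent in $C_n$, for then $C_{p,q}^{\sigma}$ lies in the corresponding $J_i$.

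The heart of the argument is the following combinatorial claim about annular link states, which I expect to be the main step. For each $2$-element part $\{a,b\}$ of $q$, the second bullet of Definition \ref{def:AnnularLinkState} guarantees that at least one of the two cyclic arcs $(a,b)$, $(b,a)$ contains no defects; call such an arc \emph{defect-free}. Consider all pairs consisting of a $2$-element part of $q$ together with a choice of defect-free arc, and pick one such pair $(\{a,b\},A)$ minimising $|A|$. Suppose for contradiction that $|A|>0$. The first bullet of Definition \ref{def:AnnularLinkState} says that $A$ is a union of parts of $q$, and since $A$ contains no defects these parts are all $2$-element parts with both endpoints in $A$. Pick any such part $\{c,d\}$ with $c,d\in A$. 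Then the arc of $\{c,d\}$ that runs inside $A$ is again defect-free and is strictly shorter than $A$, contradicting the minimality. Hence $|A|=0$, so $a$ and $b$ are cyclically adjacent.

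Applying the claim to $q$ gives a part $\{i,i+1\}$, so $C_{p,q}^{\sigma}\in J_i$, completing the cover. The only subtlety to watch is the case where $q$ has no defects at all, but then both arcs of every $2$-element part are vacuously defect-free and the same minimality argument applies without change.
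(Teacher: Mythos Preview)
Your proof is correct and follows essentially the same approach as the paper's own proof. Both arguments locate a $2$-element part of the right link state $q$ whose associated defect-free arc is as small as possible, concluding that the endpoints must be cyclically adjacent; you phrase this as a single minimality argument, while the paper phrases it as an iterative descent (repeatedly passing to a strictly smaller interval), but these are the same idea. You also explicitly verify the containment $J_i \subset I_{\leq n-1}$, which the paper leaves implicit.
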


\begin{proof} Any diagram $x$ with fewer than $n$ left-to-right connections must have a primed vertex $i'$ not connected to an unprimed vertex. This vertex must therefore be connected to some other vertex $j'$, so by Proposition \ref{prop:JCellular}, and the definition of annular link states (Definition \ref{def:AnnularLinkState}), at least one of the cyclic intervals $(i',j')$ or $(j',i')$ consists entirely of vertices with right-to-right connections. Without loss of generality, suppose that $(i',j')$ consists entirely of vertices with right-to-right connections. Choose such a right-to-right connection, and call its ends $i'_1$ and $j'_1$ (choosing which end receives which name so that $(i'_1,j'_1) \subset (i',j')$). If $(i'_1,j'_1)$ is empty, then $i'_1$ and $j'_1$ must be adjacent. If not, then (as a subset of $(i',j')$) it must consist entirely of vertices with right-to-right connections, and we may choose one of \emph{these} and repeat. At each stage, the cyclic interval becomes smaller, so we eventually reach a connection between two adjacent vertices, say $k'$ and $(k+1)'$, whence $x \in J_k$. \end{proof}

Let $T \subset C_n$. Borrowing terminology from \cite{Sroka}, we will say that $T$ is \emph{innermost} if there do not exist distinct elements $i \neq j$ in $T$ such that $i = j+1$ or $j= i+1$. As in that paper, the point is that the innermost sets are precisely those $T$ for which there exists an annular diagram where $i'$ and $(i+1)'$ are connected for every $i \in T$.
 
\begin{lemma} \label{lem:JFormOfIntersection} Let $T \subset C_n$. The intersection $$\bigcap_{i \in T} J_i$$ is the $R$-span of those partitions which have an edge between vertices $i'$ and $(i+1)'$ whenever $i \in T$. This intersection is nonzero if and only if $T$ is innermost.
\end{lemma}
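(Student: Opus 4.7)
The plan is to handle the two claims in turn: first, the $R$-module description of the intersection, and then the innermost characterisation of non-vanishing.

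For the basis description, I would observe that each $J_i$ is, by definition, the $R$-span of a subset of the canonical basis of annular diagrams given by Proposition~\ref{prop:JCellular}. One should briefly verify that this subset actually defines a \emph{left} ideal: if $\beta$ has $\{i',(i+1)'\}$ as a part, then after the priming convention of Definition~\ref{def:partitionAlg} the edge lives on $\underline{n}''$, so it is disjoint from the middle $\underline{n}'$ where gluing takes place, and it survives the restriction to $\underline{n} \cup \underline{n}''$ and subsequent unpriming. Hence $\alpha\beta$ still has $\{i',(i+1)'\}$ as a part of its underlying partition. Once this is in place, the intersection $\bigcap_{i\in T}J_i$ is automatically $R$-free on those basis elements satisfying all of the listed edge conditions simultaneously, which is the claimed span.

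For the dichotomy I would start with the easy direction. If $T$ is not innermost, there exist $i,i+1 \in T$, so any diagram in the intersection would have to contain both $\{i',(i+1)'\}$ and $\{(i+1)',(i+2)'\}$ as parts. Since $(i+1)'$ can lie in only one part of a partition, no such diagram exists and the intersection is zero.

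For the converse, assume $T$ is innermost and produce an explicit element of the intersection. Let $\rho$ be the partition of $C_n \cup C_n'$ whose parts are $\{k,k+1\}$ and $\{k',(k+1)'\}$ for each $k \in T$, together with $\{j,j'\}$ for every remaining $j \in C_n$. The innermost hypothesis guarantees that no two of the pairs $\{k,k+1\}$ (or $\{k',(k+1)'\}$) share a vertex, so $\rho$ is a well-defined Brauer partition. A non-crossing annular representative is obtained by drawing each $\{k,k+1\}$ as a small arc hugging the bottom boundary, each $\{k',(k+1)'\}$ as a small arc hugging the top boundary, and each $\{j,j'\}$ as a straight vertical segment; innermostness ensures that the small arcs at each boundary can be taken with pairwise disjoint supports, and the vertical segments obviously avoid them. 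Hence $\rho \in \Jones_n(\delta)$, so $\rho$ witnesses that the intersection is nonzero.

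The content of the argument is essentially bookkeeping; the one place where the hypothesis matters is constructing a non-crossing annular representative in the innermost case, and the failure mode one must rule out in the non-innermost case (shared endpoint on the top boundary) is exactly what prevents any annular diagram from lying in the intersection. So no single step should be a genuine obstacle, provided the left-ideal property of each $J_i$ is noted carefully at the outset.
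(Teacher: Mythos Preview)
Your proposal is correct and follows essentially the same route as the paper. The only cosmetic difference is that, for the ``innermost $\Rightarrow$ nonzero'' direction, the paper constructs the annular link state $q$ (arcs $\{i,i+1\}$ for $i\in T$, singletons elsewhere) and then invokes Proposition~\ref{prop:JCellular} to produce the witness $C_{q,q}^{1_{C_t}}$, whereas you build the same diagram by hand and verify planarity directly; your $\rho$ is exactly $C_{q,q}^{1_{C_t}}$.
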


\begin{proof} The first claim is immediate. For the second claim, note first that if $T$ is not innermost then a partition in $\bigcap_{i \in T} J_i$ must have a part of cardinality at least three, and hence certainly cannot be annular. Conversely, if $T$ is innermost, then consider the collection $q$ of subsets of $C_n$ which consists of pairs $\{i,i+1\}$ for each $i \in T$ and singletons $\{s\}$ for each $s \in C_n$ which is not equal to $i$ or $i+1$ for some $i \in T$. If $T$ is innermost, then $q$ is a partition, and is automatically annular (both of the conditions of Definition \ref{def:AnnularLinkState} holding trivially, since the open cyclic interval $(i,i+1)$ is empty), and Proposition \ref{prop:JCellular} implies there exists a diagram whose right link state is $q$ (for example, $C_{q,q}^{1_{C_t}}$).
\end{proof}

For $T$ a subset of $C_n$, say that the \emph{moral support} of $T$ is the set $\mathrm{MS}(T) = T \cup (T+1) \subset C_n$. For $S$ a subset of $C_n$, and $b \in S$, say that $b$ is \emph{locally (cyclically) minimal} in $S$ if $b \in S$, but $b-1 \not \in S$. Note that $S$ contains a locally minimal element precisely when it is nonempty and proper.

The following lemma is then immediate.

\begin{lemma} \label{lem:existsa} Let $T \subset C_n$. If $\textrm{MS}(T)$ is a nonempty proper subset of $C_n$, then there exists $a \in C_n$ so that $a+2$ is locally minimal in the complement $C_n \setminus \textrm{MS}(T)$. \qed \end{lemma}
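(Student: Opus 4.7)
The plan is to take $a+2$ to be a cyclically locally minimal element of the complement $S := C_n \setminus \textrm{MS}(T)$, so that the required conditions $a+2 \in S$ and $a+1 \in \textrm{MS}(T)$ fall out by definition.

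First I would reduce to the following elementary fact: any nonempty proper subset $S$ of the cyclic group $C_n$ contains a locally minimal element. By the hypothesis $\textrm{MS}(T) \subsetneq C_n$, the complement $S$ is nonempty, and -- in the relevant case $T \neq \emptyset$ so that also $S \neq C_n$ -- it is a nonempty proper subset. I would prove the elementary fact directly: pick any $b \in S$ and iteratively replace $b$ by $b-1$ as long as $b-1 \in S$. This procedure must terminate in at most $n$ steps, since otherwise walking backwards through all of $C_n$ we would conclude $C_n \subseteq S$, contradicting properness. The halting value $b'$ satisfies $b' \in S$ and $b'-1 \notin S$, i.e.\ is locally cyclically minimal. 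Setting $a := b' - 2$ then yields $a+2 = b' \in S$ and $a+1 = b' - 1 \in \textrm{MS}(T)$, exactly as required.

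The only delicate point is the boundary case $T = \emptyset$, in which $\textrm{MS}(T)$ is empty and no element of $S = C_n$ can be locally minimal. This is harmless in context, since in the Mayer-Vietoris argument one only cares about $T$ indexing an intersection of ideals $J_i$, and the empty-$T$ intersection is the whole algebra and plays no role. There is no substantive obstacle: the lemma is a pigeonhole observation about cyclic orders rather than a genuinely difficult statement.
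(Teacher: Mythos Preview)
Your argument is correct and is exactly the sort of one-line pigeonhole reasoning the paper has in mind: the lemma is stated with a \qed\ and no proof, so there is nothing further to compare. You are also right to flag the edge case $T=\emptyset$, where $\textrm{MS}(T)=\emptyset$ is proper but the complement $C_n$ has no locally minimal element; the paper silently assumes $T\neq\emptyset$ (and indeed, as you observe, this is the only case needed in Lemma~\ref{lem:JIdempotents}, since the empty intersection is the whole algebra, generated by the idempotent $1$).
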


\begin{lemma} \label{lem:Jmap} Let $T \subset C_n$ be innermost. Suppose that $a \in T$ is such that $a+2$ is locally minimal in the complement of the moral support $\mathrm{MS}(T)$ (so that in particular $\mathrm{MS}(T)$ must be nonempty and proper). Let $\omega$ be the annular diagram where:
\begin{itemize}
    \item $a+2$ is connected to $a+1$,
    \item $a$ is connected to $(a+2)'$,
    \item $(a+1)'$ is connected to $a'$,
    \item for $i \in (a+2,a) = C_n \setminus [a,a+2]$, $i$ is connected to $i'$.
\end{itemize}
Then $(\bigcap_{i \in T \setminus \{a\} } J_i) \cdot \omega \subset \bigcap_{i \in T} J_i$.
\end{lemma}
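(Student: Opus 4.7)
Since $\bigcap_{i \in T \setminus \{a\}} J_i$ is the $R$-span of annular diagrams satisfying the conditions of Lemma \ref{lem:JFormOfIntersection} (with $T$ replaced by $T \setminus \{a\}$), it suffices to take such a diagram $\rho$ and show that the product $\rho \omega \in \Jones_n(\delta)$ lies in $\bigcap_{i \in T} J_i$. The product $\rho \omega$ equals $\delta^k \cdot (\rho * \omega)$ for some $k \geq 0$, where $\rho * \omega$ is the underlying annular diagram. Since $\bigcap_{i \in T} J_i$ is a left ideal, hence closed under scalar multiplication, it is enough to verify that $\rho * \omega$ has an edge between $i'$ and $(i+1)'$ for every $i \in T$.

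\emph{Step 1: The case $i=a$.} By construction the pair $\{a', (a+1)'\}$ is a part of $\omega$, i.e.\ this is a right-to-right edge of $\omega$. Since right-to-right edges of the second factor are never destroyed by the diagrammatic composition, $\{a', (a+1)'\}$ is still a part of $\rho * \omega$, so $\rho \omega \in J_a$.

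\emph{Step 2: Extract the combinatorial content of the hypotheses.} By definition of locally minimal, $a+2 \notin \mathrm{MS}(T) = T \cup (T+1)$ and $a+1 \in \mathrm{MS}(T)$. The first gives $a+2 \notin T$ and $a+1 \notin T$; combined with $a+1 \in \mathrm{MS}(T)$, this forces $a \in T$. Now fix $i \in T \setminus \{a\}$. Because $T$ is innermost, $i \not\in \{a-1, a, a+1\}$. Since $a+1, a+2 \notin T$ and $i \in T$, we also have $i \notin \{a+1, a+2\}$. Hence both $i$ and $i+1$ lie in the open cyclic interval $(a+2, a) = C_n \setminus [a, a+2]$.

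\emph{Step 3: The case $i \in T \setminus \{a\}$.} By the previous step, the parts $\{i, i'\}$ and $\{i+1, (i+1)'\}$ both appear in $\omega$ (straight-through strands). Since $\rho \in J_i$, the vertices $i'$ and $(i+1)'$ lie in a common part of $\rho$. In forming $\rho * \omega$, the right vertices $i', (i+1)'$ of $\rho$ are glued to the left vertices $i, i+1$ of $\omega$; taking the transitive closure along the chain
\[
i'_{\omega} \;\sim\; i_{\omega} = i'_{\rho} \;\sim\; (i+1)'_{\rho} = (i+1)_{\omega} \;\sim\; (i+1)'_{\omega},
\]
we conclude that $i'$ and $(i+1)'$ lie in the same part of $\rho * \omega$, so $\rho \omega \in J_i$.

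Combining the two cases yields $\rho \omega \in \bigcap_{i \in T} J_i$, as required. The main (very mild) obstacle is the bookkeeping in Step 2 to confirm that both endpoints $i$ and $i+1$ avoid the ``touched'' window $\{a, a+1, a+2\}$; everything else is a direct unpacking of the diagrammatic composition.
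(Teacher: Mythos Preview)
Your proof is correct and follows essentially the same approach as the paper's: show $\rho\omega \in J_a$ from the right-to-right edge $\{a',(a+1)'\}$ in $\omega$, then for $j \in T\setminus\{a\}$ verify that $j,j+1 \in (a+2,a)$ so the straight-through strands of $\omega$ preserve the $J_j$ condition. If anything, your Step~2 is slightly more explicit than the paper in deriving $a \in T$ (which the paper's invocation of ``innermost'' tacitly relies on) and in handling the $\delta$-prefactor.
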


The picture is the same as the key one in Sroka's paper \cite{Sroka}. If $a=3$ in $\Jones_{8}$, then:
\begin{center}
$\omega = $
\quad
\begin{tikzpicture}[x=1.5cm,y=0.8cm,baseline=1.2cm]

\def\wid{2}
\def\cheatWid{\wid*1.05}
\def\hei{0.5}
\def\nodesize{3}
\def\newAng{120}

\draw[e, dashed] (2*\wid,-0.5*\hei) rectangle (3*\wid,-0.5*\hei);
\draw[e, dashed] (2*\wid,7.5*\hei) rectangle (3*\wid,7.5*\hei);

\node[v, minimum size=\nodesize] (c1) at (2* \wid,0*\hei) {};
\node[v, minimum size=\nodesize] (c2) at (2* \wid,1*\hei) {};
\node[v, minimum size=\nodesize] (c3) at (2* \wid,2*\hei) {};
\node[v, minimum size=\nodesize] (c4) at (2* \wid,3*\hei) {};
\node[v, minimum size=\nodesize] (c5) at (2* \wid,4*\hei) {};
\node[v, minimum size=\nodesize] (c6) at (2* \wid,5*\hei) {};
\node[v, minimum size=\nodesize] (c7) at (2* \wid,6*\hei) {};
\node[v, minimum size=\nodesize] (c8) at (2* \wid,7*\hei) {};

\node[v, minimum size=\nodesize] (d1) at (3* \wid,0*\hei) {};
\node[v, minimum size=\nodesize] (d2) at (3* \wid,1*\hei) {};
\node[v, minimum size=\nodesize] (d3) at (3* \wid,2*\hei) {};
\node[v, minimum size=\nodesize] (d4) at (3* \wid,3*\hei) {};
\node[v, minimum size=\nodesize] (d5) at (3* \wid,4*\hei) {};
\node[v, minimum size=\nodesize] (d6) at (3* \wid,5*\hei) {};
\node[v, minimum size=\nodesize] (d7) at (3* \wid,6*\hei) {};
\node[v, minimum size=\nodesize] (d8) at (3* \wid,7*\hei) {};

\draw[e] (c1) to[out=0, in=180] (d1);
\draw[e] (c2) to[out=0, in=180] (d2);

\draw[e] (c5) to[out=0, in=0] (c4);
\draw[e] (c3) to[out=0, in=180] (d5);
\draw[e] (d3) to[out=180, in=180] (d4);

\draw[e] (c6) to[out=0, in=180] (d6);
\draw[e] (c7) to[out=0, in=180] (d7);
\draw[e] (c8) to[out=0, in=180] (d8);

\draw (3.4* \wid,2*\hei) node{$a'$};
\draw (4* \wid,3*\hei) node{$(a+1)'$};
\draw (3.5* \wid,4*\hei) node{$(a+2)'$};

\draw[e, dotted] (d3) to (3.3* \wid,2*\hei);
\draw[e, dotted] (d4) to (3.75* \wid,3*\hei);
\draw[e, dotted] (d5) to (3.25* \wid,4*\hei);
\end{tikzpicture}
\quad
\end{center}

\begin{proof} Let $\rho$ be a diagram in $\bigcap_{i \in T \setminus \{a\} } J_i$. We must show that $\rho \omega \in \bigcap_{i \in T} J_i$.

First, since $\omega$ has an edge connecting $(a+1)'$ and $a'$, $\rho \omega$ also has an edge connecting these vertices, and hence $\rho \omega$ is in $J_a$.

Suppose now that $j \in T \setminus{a}$. Neither $j=a+1$ nor $j+1=a$ can hold, since this would contradict the assumption that $T$ was innermost. Thus, since $j$ is not in the set $\{a - 1, a, a + 1\}$, and $a+2$ lies in the complement of $T$, so in particular $a+2 \neq j$, we have that $j$ and $j+1$ are both in the open cyclic interval $(a+2,a)$. Thus, $j$ is connected to $j'$ in $\omega$, and likewise $j+1$ is connected to $(j+1)'$. Since $j'$ and $(j+1)'$ are assumed to be connected in $\rho$, it now follows that they are still connected in $\rho \omega$. Thus $\rho \omega$ is in $J_{j}$.

Since $j$ was chosen arbitrarily from $T \setminus{a}$, and we have already established that $\rho \omega \in J_a$, it follows that $\rho \omega $ is contained in the intersection $\bigcap_{i \in T } J_i$, as required.  \end{proof}

\begin{lemma} \label{lem:Jrtrct} Let $T \subset C_n$ be innermost. Suppose that $a \in T$ is such that $a+2$ is locally minimal in the complement of the moral support $\mathrm{MS}(T)$. The map $$ \cdot \omega: \bigcap_{i \in T \setminus \{a\} } J_i \to \bigcap_{i \in T} J_i$$ constructed in Lemma \ref{lem:Jmap} is a retraction of the inclusion of $\bigcap_{i \in T} J_i$ into $\bigcap_{i \in T \setminus \{a\} } J_i$.
\end{lemma}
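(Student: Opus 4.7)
The plan is to pick an arbitrary annular diagram $\rho$ in $\bigcap_{i \in T} J_i$ and verify that $\rho\omega = \rho$ in $\Jones_n(\delta)$. Following the usual composition rule, I identify the primed vertices $i'$ of $\rho$ with the unprimed vertices $i$ of $\omega$ to form a middle column, and read off $\rho\omega$ by tracing alternating chains of $\rho$- and $\omega$-edges through this middle. Two things need to be checked: that each right vertex of $\omega$ ends up paired (through the middle) with the same partner as in $\rho$, and that no such chain closes up into a loop in the middle (which would introduce an unwanted factor of $\delta$).

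The key structural input is that, because $a \in T$, the vertices $a'$ and $(a+1)'$ are matched in $\rho$; since this is a matching, $(a+2)'$ of $\rho$ must be matched to some vertex outside the set $\{a', (a+1)', (a+2)'\}$. With that in hand, the easy cases follow quickly: the direct $\omega$-edge between right $a'$ and right $(a+1)'$ reproduces the required $\rho$-edge, and for $i \notin [a, a+2]$ the $\omega$-edge $(i, i')$ carries right $i'$ into the middle where the $\rho$-edge at $i'$ of $\rho$ takes over and exits to the same partner as in $\rho$.

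The interesting chain is the one passing through the three distinguished middle vertices $a, a+1, a+2$. Starting at right $(a+2)'$ of $\omega$, the $\omega$-edge connects to middle $a$; the $\rho$-edge then goes to middle $a+1$ (using $a \in T$); the $\omega$-edge to middle $a+2$; and finally the $\rho$-edge at $(a+2)'$ of $\rho$ exits to the partner of $(a+2)'$ in $\rho$, which by the structural input lies outside the three middle vertices. So this chain exits the middle cleanly and reproduces the $\rho$-pairing for $(a+2)'$. The same analysis shows that the only middle vertices whose $\omega$-edges remain in the middle are $a+1$ and $a+2$, and these sit on this single chain, which exits both at middle $a$ (via $\omega$ to right $(a+2)'$) and at middle $a+2$ (via $\rho$ to the partner of $(a+2)'$ in $\rho$), so no loop can close.

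The main obstacle is the careful bookkeeping around the three distinguished middle vertices $a, a+1, a+2$; outside of them, $\omega$ acts as the identity pairing and the verification is automatic. This parallels Sroka's retraction argument for the Temperley--Lieb case, so I expect no serious difficulty beyond being careful about which alternating chain one is tracing at each step.
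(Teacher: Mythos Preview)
Your argument is correct and follows essentially the same route as the paper's proof: both reduce to the observation that since $\rho\in J_a$ (so $a'$ and $(a+1)'$ are matched in $\rho$), the middle vertex $(a+2)'$ threads through the $a',(a+1)',(a+2)'$ block to reconnect with its original $\rho$-partner, and the unique left-to-left edge of $\omega$ cannot close a loop because $(a+1)'$ is already tied to $a'$. Your chain-tracing language is just a more explicit rendering of what the paper calls the composed partition $\rho\circ\omega$; the logical content is identical.
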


Note again that the hypotheses imply that $\mathrm{MS}(T)$ must be nonempty and proper.

\begin{proof} Suppose $\rho \in \bigcap_{i \in T} J_i$. We must show $\rho \omega = \rho$.

Since $a \in T$, we know that $\rho \in J_a$, so $a'$ and $(a+1)'$ are connected in $\rho$. First, the product $\rho \omega$ produces no factors of $\delta$: the only left-to-left connection in $\omega$ is the one between $(a+2)$ and $a+1$, so there can only be a loop if $(a+2)'$ and $(a+1)'$ are connected in $\rho$. They cannot be, because $(a+1)'$ is connected to $a'$ in $\rho$. It follows that $\rho \omega$ is equal to the underlying partition $\rho * \omega$ (c.f. Definition \ref{def:partitionAlg}) - that is, the multiplication does not produce any $\delta$ factors - and we will now argue that this partition is equal to $\rho$.

Since $a'$ and $(a+1)'$ are connected in $\rho$, $(a+2)'$ is connected to $(a+2)''$ in the composed partition $\rho \circ \omega'$ (c.f. Definition \ref{def:partitionAlg}). More generally, this means that $i'$ is connected to $i''$ for $i$ not equal to $a$ or $a+1$. It follows that for such $i$, $i'$ is connected to the same vertex in $\rho * \omega$ as it was in $\rho$, and since $(a+1)'$ and $a'$ are connected in $\omega$, they are also connected in $\rho * \omega$. This establishes that $\rho* \omega$ has all of the right-to-right and left-to-right connections from $\rho$, and it is automatic that the product retains all left-to-left connections from $\rho$. Thus, $\rho * \omega = \rho$, as required. \end{proof}

\begin{lemma} \label{lem:JIdempotents} Suppose that $T \subset C_n$ is innermost. Unless $n$ is even and $T$ consists either of all of the odd or all of the even elements of $C_n$, the ideal $$\bigcap_{i \in T} J_i$$ is a principal ideal generated by an idempotent. If $\delta$ is invertible, then $\bigcap_{i \in T} J_i$ is principal idempotent for any innermost $T$. \end{lemma}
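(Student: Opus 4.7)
The plan is to reduce to base cases via induction on $|T|$, combining the retractions of Lemma \ref{lem:Jrtrct} with an explicit idempotent construction that handles the two exceptional cases when $\delta$ is invertible.

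For the first assertion, I would induct on $|T|$ to show that if $T$ is innermost and $\mathrm{MS}(T) \neq C_n$ then $\bigcap_{i \in T} J_i$ is principal and generated by an idempotent. The base case $T = \emptyset$ is trivial since $\bigcap_{i \in \emptyset} J_i = \Jones_n(\delta) = \Jones_n(\delta) \cdot 1$. For the inductive step, Lemma \ref{lem:existsa} yields $a \in C_n$ with $a+2$ locally minimal in $C_n \setminus \mathrm{MS}(T)$. This automatically forces $a \in T$: local minimality gives $a+1 \in \mathrm{MS}(T) = T \cup (T+1)$, while $a+2 \notin \mathrm{MS}(T)$ gives $a+1 \notin T$, so $a$ must lie in $T$. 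Setting $T' = T \setminus \{a\}$, the set $T'$ is innermost with $\mathrm{MS}(T') \subset \mathrm{MS}(T) \neq C_n$, so by induction $\bigcap_{i \in T'} J_i = \Jones_n(\delta) \cdot e_{T'}$ for some idempotent $e_{T'}$, yielding a right-multiplication retraction $\cdot e_{T'} : \Jones_n(\delta) \to \bigcap_{i \in T'} J_i$. Composing with the retraction $\cdot \omega : \bigcap_{i \in T'} J_i \to \bigcap_{i \in T} J_i$ of Lemma \ref{lem:Jrtrct} yields a right-multiplication retraction $\cdot (e_{T'} \omega) : \Jones_n(\delta) \to \bigcap_{i \in T} J_i$, and Lemma \ref{lem:retractIsIdemp} concludes that $e_{T'} \omega$ is an idempotent generating $\bigcap_{i \in T} J_i$.

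For the second assertion, I must also handle the cases $\mathrm{MS}(T) = C_n$ assuming $\delta$ invertible. Since $T$ is innermost, $T \cap (T+1) = \emptyset$, so $\mathrm{MS}(T) = C_n$ forces $|T| + |T+1| = n$, hence $n$ is even with $|T| = n/2$ and $T$ equal to the set of even or of odd elements of $C_n$. In either case, let $e_T \in \Jones_n(\delta)$ be the annular diagram whose parts are precisely $\{i, i+1\}$ and $\{i', (i+1)'\}$ for $i \in T$, with no other edges (this is an honest annular diagram: every arc can be drawn as a small non-crossing local arc, with no Dehn twist required). A direct product calculation shows $e_T^2 = \delta^{n/2} e_T$: the right-right pairings $\{i', (i+1)'\}$ of the first factor coincide (after identification) with the left-left pairings $\{i, i+1\}$ of the second factor, producing exactly $n/2$ loops, absorbing every middle vertex, and leaving the underlying diagram equal to $e_T$. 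Hence $f_T := \delta^{-n/2} e_T$ is an idempotent in $\bigcap_{i \in T} J_i$, giving $\Jones_n(\delta) \cdot f_T \subset \bigcap_{i \in T} J_i$.

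For the reverse inclusion, any diagram $\rho \in \bigcap_{i \in T} J_i$ contains the $n/2$ prescribed right-right edges, which account for all $n$ right vertices and force $\rho$ to have no left-right connections. The same loop-count calculation gives $\rho \cdot e_T = \delta^{n/2} \rho$, whence $\rho = \rho \cdot f_T \in \Jones_n(\delta) \cdot f_T$, and so $\bigcap_{i \in T} J_i = \Jones_n(\delta) \cdot f_T$ is principal idempotent. The main subtle point throughout is ensuring that the Dehn-twist ambiguity of annular diagrams does not interfere with these explicit right multiplications, but since $\omega$ and $e_T$ are constructed entirely from small local arcs with no cylinder wrapping, the products reduce to routine Temperley-Lieb-type calculations.
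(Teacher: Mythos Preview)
Your proof is correct and follows essentially the same route as the paper: use Lemma~\ref{lem:existsa} to find a suitable $a$, apply Lemma~\ref{lem:Jrtrct} iteratively to produce a right-multiplication retraction of $\Jones_n(\delta)$ onto $\bigcap_{i\in T}J_i$, and conclude via Lemma~\ref{lem:retractIsIdemp}. Your explicit verification that $a\in T$ and the idempotent $f_T=\delta^{-n/2}e_T$ for the exceptional cases fill in details the paper leaves implicit or to the reader.
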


\begin{proof} If $T$ is empty then $\bigcap_{i \in T} J_i = \mathrm{J}_n(\delta)$ and we may take the idempotent to be the identity diagram. If $T$ is nonempty innermost, then the moral support $\textrm{MS}(T)$ is a proper subset of $C_n$ unless $n$ is even and $T$ consists either of all of the odd or all of the even elements of $C_n$. The result then follows by combining Lemma \ref{lem:existsa}, Lemma \ref{lem:Jrtrct} and Lemma \ref{lem:retractIsIdemp}. The case of invertible $\delta$ is left to the reader. \end{proof}

We are now ready to prove the main result on $\Jones_n(\delta)$. Recall from the introduction and from Remark \ref{rmk: Jones trivial module} that we have an isomorphism of augmented algebras $$\faktor{\Jones_n(\delta)}{I_{\leq n-1}} \cong R C_n,$$ and that we use $\t$ to denote the trivial module given by either the augmentation of $\Jones_n(\delta)$ or that of $R C_n$.

\begin{proof}[Proof of Theorems \ref{thm:Jones} and \ref{thm:JonesSroka}] By Lemma \ref{lem:JCover}, the ideals $J_i$ form an $R$-free cover of $I_{\leq n-1}$. By Lemma \ref{lem:JIdempotents}, an intersection of at most $\frac{n}{2}-1$ ideals from among the $J_i$ is either zero or principal idempotent, so this is indeed a principal idempotent cover of height $\frac{n}{2}-1$. Since $I_{\leq n-1}$ acts trivially on $\t$, Theorem \ref{thm:Jones} then follows by Theorem \ref{thm:main}. 

If $n$ is odd or $\delta$ is invertible, then Lemma \ref{lem:JIdempotents} gives that \emph{any} intersection of ideals from among the $J_i$ is either zero or principal idempotent, so Theorem \ref{thm:JonesSroka} follows by applying Theorem \ref{thm:main} for arbitrarily large finite $h$. \end{proof}

\printbibliography

@article{Hepworth,
author = {Hepworth, Richard},
title = {Homological stability for Iwahori–Hecke algebras},
journal = {Journal of Topology},
volume = {15},
number = {4},
pages = {2174-2215},
year = {2022}
}

@misc{BH,
  url = {https://arxiv.org/abs/2006.04256},
  author = {Boyd, Rachael and Hepworth, Richard},
  title = {The homology of the Temperley-Lieb algebras},
  publisher = {arXiv},
  year = {2020},
  note = {To appear in Geometry and Topology}
}

@article {BHComb,
    AUTHOR = {Boyd, Rachael and Hepworth, Richard},
     TITLE = {Combinatorics of injective words for {T}emperley-{L}ieb
              algebras},
   JOURNAL = {J. Combin. Theory Ser. A},
  FJOURNAL = {Journal of Combinatorial Theory. Series A},
    VOLUME = {181},
      YEAR = {2021},
     PAGES = {Paper No. 105446, 27}
}

@article {BHP,
    AUTHOR = {Boyd, Rachael and Hepworth, Richard and Patzt, Peter},
     TITLE = {The homology of the {B}rauer algebras},
   JOURNAL = {Selecta Math. (N.S.)},
  FJOURNAL = {Selecta Mathematica. New Series},
    VOLUME = {27},
      YEAR = {2021},
    NUMBER = {5},
     PAGES = {Paper No. 85, 31}
}

@misc{Sroka,
  url = {https://arxiv.org/abs/2202.08799},  
  author = {Sroka, Robin J.},
  title = {The homology of a Temperley-Lieb algebra on an odd number of strands},
  publisher = {arXiv},
  year = {2022}
}

@misc{Boyde,
  url = {https://arxiv.org/abs/2212.01826},
  author = {Boyde, Guy},
  title = {Idempotents and homology of diagram algebras},
  publisher = {arXiv},
  year = {2022}
}

@misc{Moselle,
  url = {https://arxiv.org/abs/2211.06230},
  author = {Moselle, Isaac},
  title = {Homological stability for Iwahori-Hecke algebras of type B},
  publisher = {arXiv},
  year = {2022}
}

@misc{BHPPartition,
      title={The homology of the partition algebras}, 
      author={Rachael Boyd and Richard Hepworth and Peter Patzt},
      year={2023},
      eprint={2303.07979},
      archivePrefix={arXiv},
      primaryClass={math.AT}
}

@article {GrahamLehrer,
    AUTHOR = {Graham, J. J. and Lehrer, G. I.},
     TITLE = {Cellular algebras},
   JOURNAL = {Invent. Math.},
  FJOURNAL = {Inventiones Mathematicae},
    VOLUME = {123},
      YEAR = {1996},
    NUMBER = {1},
     PAGES = {1--34}
}

@article {Xi,
    AUTHOR = {Xi, Changchang},
     TITLE = {Partition algebras are cellular},
   JOURNAL = {Compositio Math.},
  FJOURNAL = {Compositio Mathematica},
    VOLUME = {119},
      YEAR = {1999},
    NUMBER = {1},
     PAGES = {99--109}
}

@article{JonesPartition,
    AUTHOR = {Jones, V.F.R.},
     TITLE = {The Potts model and the symmetric group},
  JOURNAL = {Subfactors (Kyuzeso, 1993)},
      YEAR = {1994},
    NOTE = {River Edge, NJ, World Sci. Publishing},
     PAGES = {259–-267}
}

@article{Martin,
author = {Martin, Paul},
title = {Temperley-Lieb algebras for nonplanar statistical mechanics - the partition algebra construction},
journal = {Journal of Knot Theory and Its Ramifications},
volume = {03},
number = {01},
pages = {51-82},
year = {1994}
}

@article{JonesAnnular,
    AUTHOR = {Jones, V.F.R.},
     TITLE = {Quotient of the affine Hecke algebra in the Brauer algebra},
  JOURNAL = {L’Enseignement Math.},
      VOLUME = {40},
      YEAR = {1994},
    NUMBER = {3-4},
     PAGES = {313--344}
}

@article {Brauer,
    AUTHOR = {Brauer, Richard},
     TITLE = {On algebras which are connected with the semisimple continuous
              groups},
   JOURNAL = {Ann. of Math. (2)},
  FJOURNAL = {Annals of Mathematics. Second Series},
    VOLUME = {38},
      YEAR = {1937},
    NUMBER = {4},
     PAGES = {857--872}
}

@article {TemperleyLieb,
    AUTHOR = {Temperley, H. N. V. and Lieb, E. H.},
     TITLE = {Relations between the ``percolation'' and ``colouring''
              problem and other graph-theoretical problems associated with
              regular planar lattices: some exact results for the
              ``percolation'' problem},
   JOURNAL = {Proc. Roy. Soc. London Ser. A},
  FJOURNAL = {Proceedings of the Royal Society. London. Series A.
              Mathematical, Physical and Engineering Sciences},
    VOLUME = {322},
      YEAR = {1971},
    NUMBER = {1549},
     PAGES = {251--280}
}

@misc{BoydeCellular,
      title={Homological stability and Graham-Lehrer cellular algebras}, 
      author={Guy Boyde},
      year={2023},
      eprint={2310.00373},
      archivePrefix={arXiv}
}

@article {HalversondelMas,
    AUTHOR = {Halverson, Tom and delMas, Elise},
     TITLE = {Representations of the {R}ook-{B}rauer algebra},
   JOURNAL = {Comm. Algebra},
  FJOURNAL = {Communications in Algebra},
    VOLUME = {42},
      YEAR = {2014},
    NUMBER = {1},
     PAGES = {423--443}
}

@article {MartinMazorchuk,
    AUTHOR = {Martin, Paul and Mazorchuk, Volodymyr},
     TITLE = {On the representation theory of partial {B}rauer algebras},
   JOURNAL = {Q. J. Math.},
  FJOURNAL = {The Quarterly Journal of Mathematics},
    VOLUME = {65},
      YEAR = {2014},
    NUMBER = {1},
     PAGES = {225--247}
}

@book {BottTu,
    AUTHOR = {Bott, Raoul and Tu, Loring W.},
     TITLE = {Differential forms in algebraic topology},
    SERIES = {Graduate Texts in Mathematics},
    VOLUME = {82},
 PUBLISHER = {Springer-Verlag, New York-Berlin},
      YEAR = {1982}
}

@article {Kreweras,
    AUTHOR = {Kreweras, G.},
     TITLE = {Sur les partitions non crois\'{e}es d'un cycle},
   JOURNAL = {Discrete Math.},
  FJOURNAL = {Discrete Mathematics},
    VOLUME = {1},
      YEAR = {1972},
    NUMBER = {4},
     PAGES = {333--350}
}

\end{document}